\newcommand{\fref}[2]{\hyperref[#2]{#1~\ref*{#2}}} 
\newcommand{\norm}[2]{{\lvert \lvert #1 \rvert \rvert}_{#2}} 
\DeclareMathOperator{\id}{id} 
\DeclareMathOperator*{\argmin}{arg\,min} 
\DeclareMathOperator*{\sign}{sign} 
\DeclareMathOperator*{\supp}{supp} 
\newcommand{\PV}{PV_e} 
\title{
	A nonlinear elliptic PDE from atmospheric science: well-posedness and regularity at cloud edge
}
\author{
	Antoine Remond-Tiedrez			\thanks{Department of Mathematics, University of Wisconsin--Madison, Madison, WI, USA}
	(\email{aremondtiedrez@math.wisc.edu})
	\and Leslie M. Smith			\thanks{Department of Mathematics and Department of Engineering Physics, University of Wisconsin--Madison, Madison, WI, USA}
	(\email{lsmith@math.wisc.edu})
	\and Samuel N. Stechmann			\thanks{Department of Mathematics and Department of Atmospheric and Oceanic Sciences, University of Wisconsin--Madison, Madison, WI, USA}
	(\email{stechmann@wisc.edu})
}
\begin{document}


\newpage

\maketitle


\begin{abstract}
	The precipitating quasi-geostrophic equations go beyond the (dry) quasi-geostrophic equations by incorporating the effects of moisture.
	This means that both precipitation and phase changes between a water-vapour phase (outside a cloud) and a water-vapour-plus-liquid phase (inside a cloud) are taken into account.
	In the dry case, provided that a Laplace equation is inverted,
	the quasi-geostrophic equations may be formulated as a nonlocal transport equation for a single scalar variable (the potential vorticity).
	In the case of the precipitating quasi-geostrophic equations, inverting the Laplacian is replaced by a more challenging adversary known as potential-vorticity-and-moisture inversion.
	The PDE to invert is nonlinear and piecewise elliptic with jumps in its coefficients across the cloud edge.
	However, its global ellipticity is a priori unclear due to the dependence of the phase boundary on the unknown itself.
	This is a free boundary problem where the location of the cloud edge is one of the unknowns.
	Here we present the first rigorous analysis of this PDE,
	obtaining existence, uniqueness, and regularity results. 
	In particular the regularity results are nearly sharp.
	This analysis rests on the discovery of a variational formulation of the inversion.
	This novel formulation is used to answer a key question for applications: which quantities jump across the interface and which quantities remain continuous?
	Most notably we show that the gradient of the unknown pressure, or equivalently the streamfunction, is H\"{o}lder continuous across the cloud edge.
\end{abstract}


\begin{keywords}
	potential vorticity inversion,
	quasi-geostrophic equations,
	moist atmosphere,
	free-boundary problems
\end{keywords}

\begin{MSCcodes}
	Primary: 49J10, 35B65, 86A10; Secondary: 35J20, 35Q86, 76U60.
\end{MSCcodes}



\vfill
\noindent
\textbf{Table of contents}

\vspace{1em}
\begin{enumerate}
	\item \hyperref[sec:intro]			{Introduction}							\hfill \pageref{sec:intro}
	\item \hyperref[sec:discuss]			{Discussion}							\hfill \pageref{sec:discuss}
	\item \hyperref[sec:background]			{Background}							\hfill \pageref{sec:background}
	\item \hyperref[sec:var_form_and_prop_en]	{Variational formulation and basic properties of the energy}	\hfill \pageref{sec:var_form_and_prop_en}
	\item \hyperref[sec:wp]				{Well-posedness}						\hfill \pageref{sec:wp}
	\item \hyperref[sec:smoothing]			{Smoothing}							\hfill \pageref{sec:smoothing}
\end{enumerate}

\vspace{1em}
Appendix
\begin{enumerate}[label=\Alph*.]
	\item \hyperref[sec:tools_convex_ana]		{Tools from convex analysis}					\hfill \pageref{sec:tools_convex_ana}
	\item \hyperref[sec:tools_reg_th]		{Tools from regularity theory}					\hfill \pageref{sec:tools_reg_th}
	\item \hyperref[sec:rosetta]			{Rosetta stone}							\hfill \pageref{sec:rosetta}
	\item \hyperref[sec:comparison]			{Comparison of the variational and conserved energies}		\hfill \pageref{sec:comparison}
	\item \hyperref[sec:pqg]			{Precipitating quasi-geostrophic equations}			\hfill \pageref{sec:pqg}
\end{enumerate}
\vfill
\newpage


\textbf{Note to the reader:}
\fref{Section}{sec:intro} acts as a ``shortest path'' to the statements of the main result of this paper.
\fref{Section}{sec:discuss} discusses the main difficulties encountered, provides a map of the remainder of the paper and its arguments,
and situates the results herein with respect to the broader literature.  More details of the model itself are presented in 
\fref{Section}{sec:background} since the model is not well-known to the mathematical community.
Finally \fref{Sections}{sec:var_form_and_prop_en}--\ref{sec:smoothing} are the main body of the paper and contain the arguments culminating in the central theorems stated in \fref{Section}{sec:intro} below.

\section{Introduction and statement of the results}
\label{sec:intro}
An important success story in atmospheric dynamics is the description of incompressible fluids in the limit of fast rotation and strong stratification.
This limit is relevant on the synoptic scale\footnote{
The word ``synoptic'' has Greek roots: ``syn--'' means ``together'' and ``--optic'' means ``seen''.
The synoptic scale is thus that at which a ``general picture'' emerges.
In concrete terms it usually refers to a spatial scale of about 1,000 km and a temporal scale of a few days.
}
for the description of the troposphere and was introduced by Charney in \cite{charney} as the \emph{quasi-geostrophic} (QG) model \cite{majda_2003, vallis_2017}.
One of its key successes is that it reduces the dynamics to a single scalar variable, namely the \emph{potential vorticity} which we will denote as $PV$ \cite{ertel}.
Crucially for our discussion here: phrasing the entire QG 
dynamics in terms of the
$PV$ requires the inversion of a Laplacian
-- this is known as potential vorticity inversion, or \emph{$PV$ inversion} \cite{hoskins_85}.
This framework is similar to the vorticity formulation of the incompressible Euler equations where the Biot-Savart law requires the inversion of a Laplacian.

The classical QG
model does not take into account moisture, despite the key role that moisture plays in atmospheric dynamics,
most notably because water underpins important mechanisms for energy transfer in the atmosphere.
As water evaporates it absorbs heat, while as water condensates it releases the latent heat it had stored.
This motivates consideration of \emph{moist} variants of the QG
system.

In this paper we will consider one such moist model: the \emph{precipitating quasi-geostrophic} (PQG) model introduced in \cite{PQG2017}.
In that model the dynamics are now fully described by two variables: an \emph{equivalent} potential vorticity $PV_e$ which differs slightly from the one discussed above
(more on that in \fref{Section}{sec:background}) and a moist scalar variable $M$.
By contrast with the classical case in which moisture is not accounted for and where $PV$ inversion requires the inversion of a Laplacian,
when moisture is involved we now have to deal with $PV_e$-and-$M$ inversion, which relies on the inversion of a more complicated operator.
The complications involve both nonlinearities and lack of smoothness at cloud edge.
The object of this paper is the first rigorous analysis of this operator and of $PV_e$-and-$M$ inversion.

We need to discuss one more aspect of the model before we can write down $PV_e$-and-$M$ inversion, namely the manner in which the moisture is incorporated in the model.
The moisture is included in the model via a single scalar unknown $q$ which describes the total water content (water vapour plus condensed water).
For simplicity we will assume without loss of generality that saturation occurs at $q=0$.

Since different physics will occur depending on the sign of $q$, we introduce the following notation for convenience.
For the unsaturated phase $ \left\{ q<0 \right\}$ which contains water only in vapour form we introduce a Heaviside function $H_u = \mathds{1} (q<0)$,
while for the saturated phase $ \left\{ q \geqslant 0 \right\}$ which also contains water in the form of raindrops we introduce $H_s = 1-H_u = \mathds{1}(q\geqslant 0)$
-- see also \fref{Figure}{fig:phases}.
\begin{figure}
	\centering
	\includegraphics{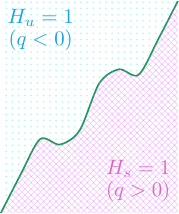}
	\caption{
		A pictorial depiction of the interface separating the two phases of water considered here
		(recall that, without loss of generality, saturation is taken to occur at $q=0$).
		The blue region on the left  corresponds to the unsaturated region where $H_u = 1$ (the outside of a cloud)
		whereas the pink region on the left where $H_s = 1$ corresponds to the saturated region (the inside of a cloud).
		The level set $ \left\{ q = 0 \right\}$ is the cloud edge.
		Recall that $H_u = \mathds{1} (q<0)$ and $H_s = \mathds{1} (q \leqslant 0)$ such that $H_u + H_s = 1$.
	}
	\label{fig:phases}
\end{figure}
$PV_e$-and-$M$ inversion then takes the following form,
\begin{equation}
\label{eq:PV_M}
	\Delta_h p + \partial_3 \left( \frac{1}{2} \left( M + \partial_3 p \right)  H_u + \left( \partial_3 p \right) H_s \right) = PV_e,
\end{equation}
where $PV_e$ and $M$ are given and we seek to solve for the pressure $p$.\footnote{
	In the precipitating quasi-geostrophic 
 system the pressure acts as a streamfunction (as in the dry case).
	Solving for the pressure thus means solving for the velocity.
	Since the velocity transports the active scalars $PV_e$ and $M$, which in turn determine the pressure through $PV_e$-and-$M$ inversion, the system is closed.
	This is discussed in more detail in \fref{Section}{sec:background}.
}
Note that throughout this paper we will work on the $3$-torus $\mathbb{T}^3,$
and we use the
notation $\Delta_h = \partial_1^2 + \partial_2^2$,
where $x_3$ denotes the direction of both rotation and stratification (this is the so-called ``traditional approximation'' \cite{eckart}).
The PDE (\ref{eq:PV_M}) is nonlinear since $H_u$ and $H_s$ are determined by the water content $q$, which is in turn a function of $M$ and $\partial_3 p$
(see \fref{Section}{sec:background} for a more detailed discussion of the 
precipitating quasi-geostrophic model,
which includes a derivation of \eqref{eq:PV_M} and more details on how $M$ and $\partial_3 p$ determine $q$).
Immediately, \eqref{eq:PV_M} introduces two key questions.

\textbf{Question 1:}
Is equation (\ref{eq:PV_M}) truly elliptic, and thus solvable?
Certainly it is elliptic in each of the saturated and unsaturated phases, but how does the dependence of the phase boundary on the unknown $p$
impact the overall ellipticity and solvability of the problem?

A first attempt at answering this would seek to smooth out the Heavisides appearing in \eqref{eq:PV_M}.
This raises subtle issues discussed 
in \fref{Remark}{rmk:agnostic_smoothing}.

\textbf{Question 2:}
What quantities, if any, are guaranteed to be continuous across the interface between the two phases?
Conversely, are there examples for which certain quantities jump across the interface?

Our ability to answer these two questions rests on the following discovery.
We identify a variational structure underpinning $PV_e$-and-$M$ inversion.
Note that such a structure is not always present for piecewise elliptic systems and relies critically on the specific form of $PV_e$-and-$M$ inversion.
We postpone a more detailed discussion of the variational structure to \fref{Section}{sec:var_form_and_prop_en}.
Here we note that we
may reformulate $PV_e$-and-$M$ inversion by invoking the minimum function instead of explicitly using Heavisides
to obtain
\begin{equation}
\label{eq:PV_M_min0}
	\Delta p + \frac{1}{2} \partial_3 \left( \min \left( M - \partial_3 p,\, 0 \right) \right) = PV_e.
\end{equation}
This is obtained by paying close attention to how the water content $q$ is related to the moist variable $M$ and the unknown pressure $p$ (this is discussed in detail in \fref{Section}{sec:background}).

We will use the notation involving the minimum function throughout this paper but we could alternatively have used the fact that
$f^- = - \min (f,\,0)$, where $f^-$ denotes the negative part of a function $f$, to phrase $PV_e$-and-$M$ inversion in terms of the negative part of $M - \partial_3 p$.
The formulation \eqref{eq:PV_M_min0} of $PV_e$-and-$M$ inversion allows us to answer both of the key questions above,
as shown by the well-posedness result stated below.
Note that all functional spaces are assumed to be taken over the torus $\mathbb{T}^3$ unless specified otherwise.
Moreover the main spaces of interest will be $L^2 \vcentcolon= \left\{ u : \int_{\mathbb{T}^3} u^2 < \infty \right\}$,
$ \mathring{H}^1~\vcentcolon=~\left\{ u \in L^2 : \nabla u \in L^2 \text{ and } u \text{ has average zero} \right\}$,
and its dual $H^{-1} \vcentcolon= {\big( \mathring{H}^1 \big)}^*$.
\begin{theorem}
\label{thm:wp}
	For every $M\in L^2$ and $PV_e \in H^{-1}$ there exists a unique weak solution $p\in \mathring{H}^1$ of \eqref{eq:PV_M}.
	Moreover there exists $\alpha\in (0,\,1)$ such that if $PV_e$ and $M$ are smooth then
	\begin{enumerate}
		\item	$p\in C^{1,\alpha}$ globally, including across the interface, in the sense that the gradient of $p$ is H\"{o}lder continuous with exponent $\alpha$ and
		\item	both the unsaturated and saturated phase, $ \left\{ q < 0 \right\} $ and $ \left\{ q > 0 \right\}$ respectively, are open sets and $p$ is smooth in each phase.
	\end{enumerate}
	Finally: this regularity result is nearly sharp in the sense that there exist smooth $PV_e$ and $M$ for which the unique solution $p$ of \eqref{eq:PV_M} belongs to $C^{1,\,1}$
	but not $C^2$, i.e. the gradient of $p$ is Lipschitz but not differentiable.
\end{theorem}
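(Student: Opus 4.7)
The plan is to recognize the PDE \eqref{eq:PV_M_min0} as the Euler--Lagrange equation of a strictly convex functional, extract regularity via de Giorgi's theorem on minimizers of uniformly convex functionals with $C^{1,1}$ density, and finally exhibit sharpness through an explicit one-dimensional construction.

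For existence and uniqueness, I would introduce the energy
\[
E(p) \vcentcolon= \int_{\mathbb{T}^3} \left[ \tfrac{1}{2} \lvert \nabla_h p \rvert^2 + G(\partial_3 p, M) \right] + \langle PV_e, p \rangle, \qquad G(s, m) \vcentcolon= \tfrac{1}{2} s^2 - \tfrac{1}{4} (\min(m - s, 0))^2.
\]
A direct calculation gives $\partial_s^2 G \in \{ \tfrac{1}{2}, 1 \}$, so $G$ is $C^{1,1}$ in $s$, strongly convex with modulus $\tfrac{1}{2}$, and bounded below by $\tfrac{1}{8} s^2 - C m^2$. Consequently $E$ is strictly convex and coercive on $\mathring{H}^1$, and the direct method yields a unique minimizer, whose Euler--Lagrange equation is precisely \eqref{eq:PV_M_min0}.

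For the regularity assertion, assume $M, PV_e \in C^\infty$. The main obstacle is that the principal part of the PDE has coefficients jumping across the a priori unknown free boundary $\{ M = \partial_3 p \}$, so classical Schauder theory does not apply directly. However, the Lagrangian density $L(\xi, x) = \tfrac{1}{2} \lvert \xi_h \rvert^2 + G(\xi_3, M(x))$ is $C^{1,1}$ and convex in $\xi$ with Hessian uniformly between $\tfrac{1}{2} I$ and $I$, placing the problem within the scope of de Giorgi's classical $C^{1,\alpha}$ regularity theorem for minimizers of strongly convex functionals with $C^{1,1}$ density. Applying this theorem yields $\nabla p \in C^{0,\alpha}$ globally. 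Once $p \in C^{1,\alpha}$ is established, the continuity of $q = M - \partial_3 p$ shows that $\{ q > 0 \}$ and $\{ q < 0 \}$ are open; inside each phase the PDE collapses to a linear elliptic equation with smooth data ($\Delta p = PV_e$ in $\{ q > 0 \}$ and $\Delta_h p + \tfrac{1}{2} \partial_3^2 p = PV_e - \tfrac{1}{2} \partial_3 M$ in $\{ q < 0 \}$), and interior elliptic regularity yields $p \in C^\infty$ in each phase.

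For sharpness, I would compute the jump of $\partial_3^2 p$ across the interface. From the two-sided equations, on the saturated side $\partial_3^2 p = PV_e - \Delta_h p$, while on the unsaturated side $\partial_3^2 p = 2(PV_e - \Delta_h p) - \partial_3 M$. Since $\nabla p$ is continuous, $\Delta_h p$ is continuous across the interface, so the jump reads $PV_e - \Delta_h p - \partial_3 M$. To produce a smooth pair $(PV_e, M)$ making this jump nonzero, I specialize to a one-dimensional setup $p = p(x_3)$ with $M \equiv 0$ and, say, $PV_e(x_3) = \cos(2\pi x_3)$: solving the two-phase ODE and enforcing $\partial_3 p = 0$ at the phase boundaries together with the zero-average constraint on $\partial_3 p$ pins down the interfaces uniquely at points where $PV_e \neq 0$, yielding a solution in $C^{1,1} \setminus C^2$ as claimed.
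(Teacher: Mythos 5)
Your proposal is correct in substance and follows the paper's overall architecture --- a strictly convex energy whose Euler--Lagrange equation is \eqref{eq:PV_M_min0}, then the direct method for existence/uniqueness, then global $C^{1,\alpha}$ via de Giorgi-type theory, and finally a one-dimensional example for sharpness --- but you package the regularity step and the sharpness example differently.

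For regularity, you invoke as a black box the $C^{1,\alpha}$ theorem for minimizers of uniformly convex functionals with $C^{1,1}$ density. The paper instead makes the nontrivial part explicit: it differentiates \eqref{eq:PV_M_min0} to get the divergence-form linear equation $-\nabla\cdot(A\nabla u)=\nabla\cdot F$ for $u=\partial_i p$, with $A = I - \tfrac{1}{2}\mathds{1}(M<\partial_3 p)\,e_3\otimes e_3$ uniformly elliptic and $F$ built from $\partial_i M$ and $PV_e$, and then combines the homogeneous de Giorgi estimate (\fref{Proposition}{prop:Campanato_formulation_hom_DG}) with a Campanato-type \emph{a priori} estimate for the divergence-form forcing (\fref{Proposition}{prop:Campanato_type_a_priori_estimate_div}). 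This is precisely the ingredient your black-box citation hides: your Lagrangian is not autonomous --- it depends on $x$ through $M(x)$ and carries a linear forcing $\langle PV_e,p\rangle$ --- so the textbook ``minimizers of convex $F(\nabla u)$'' statement does not apply verbatim, and one must track the extra terms $\partial_i M$ and $\partial_i PV_e$ as a right-hand side in the equation for $\partial_i p$. The argument closes the same way, but the paper's version makes the hypotheses on $M$ and $PV_e$ (namely $\nabla M\in L^q$ on the unsaturated phase and $PV_e\in L^q$, $q>3$) visible, whereas yours does not. This also buys something you gloss over: the Hölder exponent $\alpha$ the paper obtains is explicitly limited by both the de Giorgi exponent and $1-d/q$.

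For sharpness, you take $M\equiv 0$ and a nonzero $PV_e$, solving a two-phase ODE and arguing the jump of $\partial_3^2 p$ across $\{\partial_3 p=0\}$ is $PV_e$ there. This is sound in principle, but it requires one to verify that the free boundary actually occurs at a point where $PV_e\neq 0$; you assert this but do not exhibit the solution. The paper instead takes the opposite data ($PV_e\equiv 0$, $M=\sin x_3-1/\pi$), for which \fref{Lemma}{lemma:explicit_1d_sols} yields a completely explicit closed-form solution and the failure of $C^2$ at $x_3=0$ is immediate (\fref{Corollary}{cor:sharp_reg} and \fref{Example}{example:sharp_reg}). Either construction suffices; the paper's is self-verifying, yours would need the extra step of showing the zero of $\Theta=\partial_3 p$ does not accidentally coincide with a zero of $PV_e$.

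Everything else --- the computation $\partial_s^2 G\in\{\tfrac12,1\}$, the coercivity bound $G\geqslant\tfrac18 s^2-\tfrac34 m^2$, the identification of each phase as open once $\nabla p\in C^{0,\alpha}$, and the interior smoothing in each phase via constant-coefficient elliptic theory --- matches the paper (\fref{Lemma}{lemma:strg_cnvx_e}, \fref{Proposition}{prop:prop_en}, \fref{Corollary}{cor:phases_are_open}, \fref{Theorems}{thm:classical_sols_each_phase} and \ref{thm:higher_reg}).
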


While question 2 above asked which quantities remain continuous or jump across the interface, \fref{Theorem}{thm:wp} above only mentions the pressure $p$.
Nonetheless this result answers question 2 since all other quantities of interest may be reconstructed from the pressure using geostrophic and hydrostatic balance
-- this is discussed in \fref{Section}{sec:background}.
In particular we deduce from these balances that the velocity $u$, the equivalent potential temperature $\theta_e$, and the total water content $q$ have the same regularity as the gradient of the pressure:
they are H\"{o}lder continuous across the interface. (Note that the variables $u$ and $\theta_e$ are also discussed in \fref{Section}{sec:background}.)


\section{Discussion: difficulties, overview, and related literature}
\label{sec:discuss}

The main difficulty in the study of $PV_e$-and-$M$ inversion in its native form \eqref{eq:PV_M} is the presence of the Heavisides.
From a modelling perspective it is remarkably useful to invoke the Heavisides $H_u$ and $H_s$.
They allow us to clearly see how 
physical relations depend on water according to the phase in which they occur.
From a mathematical perspective, however, they introduce an ouroboros-esque danger of eating your own tail.
Given fixed Heavisides, \eqref{eq:PV_M} is piecewise elliptic and so its solvability
seems all but assured. However, these Heavisides are not fixed and depend on the solution.
Here lies the difficulty: the solution is determined by the Heavisides, which themselves are determined by the solution.
The snake is eating its own tail.
This is the typical issue that arises when tackling free boundary problems.

On the other hand,
rewriting \eqref{eq:PV_M} in the form of \eqref{eq:PV_M_min0} 
allows for progress because
it folds the free boundary 
$ \left\{ M=\partial_3 p \right\}$ explicitly into the nonlinearity.
This means that solving the PDE requires no \emph{a priori} knowledge of the free boundary: we may simply find the solution via a variational characterization and
\emph{then} determine the location of the free boundary from the solution. The snake's tail is out of its mouth.

To be more precise, reformulating $PV_e$-and-$M$ inversion in the form of \eqref{eq:PV_M_min0} helps us in two ways.
First, the formulation \eqref{eq:PV_M_min0} leads more directly to a variational formulation of $PV_e$-and-$M$ inversion.
The Direct Method of the Calculus of Variations may then be brought to bear to deduce the first part of \fref{Theorem}{thm:wp},
as shown
in \fref{Section}{sec:wp}, culminating in \fref{Theorem}{thm:exist_and_unique}.
In particular, 
we provide
one way of answering our initial question asked immediately after first encountering $PV_e$-and-$M$ inversion in its form \eqref{eq:PV_M} involving Heavisides:
demonstration that
the energy is strongly convex (see \fref{Proposition}{prop:prop_en}) verifies that indeed $PV_e$-and-$M$ inversion is an elliptic problem.

Note that there is another candidate for the energy characterising $PV_e$-and-$M$ inversion.
This alternate candidate is arguably more natural since it corresponds to the energy conserved by the PQG dynamics.
However, for reasons detailed in \fref{Remark}{rmk:comparison_en}, this conserved energy is not suitable and thus highlights the care required to identify an appropriate variational principle.

The second way in which the formulation \eqref{eq:PV_M_min0} helps us is with regard to regularity.
Indeed, while differentiating \eqref{eq:PV_M} would prove challenging due to the presence of Heavisides, differentiating that same equation in the form of \eqref{eq:PV_M_min0} is now straightforward.
It tells us that $ u = \partial_i p$ solves
\begin{equation}
	\label{eq:diff_PV_M}
	\partial_i PV_e = \Delta u - \frac{1}{2} \partial_3 \left( \mathds{1}(M < \partial_3 p) \partial_3 u \right).
\end{equation}
In particular, since the coefficient matrix $A = I - \frac{1}{2} H_u e_3\otimes e_3$ is uniformly elliptic, we may use the tools from de Giorgi's regularity theory
to deduce the regularity result for $PV_e$-and-$M$ inversion recorded in the second part of \fref{Theorem}{thm:wp}.
Here we have used the notation ${(v \otimes w)}_{ij} = v_i w_j$ for any two vectors $v$ and $w$.
We note that the regularity theory presented in \fref{Section}{sec:wp} is difficult to find in detail in the literature. References
\cite{vasseur} and \cite{cafarelli_vasseur} are excellent introductions to the simplest setting in which de Giorgi tools bear fruition, when there is no forcing,
and \cite{han_lin} contains a version of the regularity theory presented here but without proof.
The argument presented in detail here emphasizes how to adapt the classical de Giorgi result to the case of nonzero forcing
through the use of the Campanato characterisation of H\"{o}lder continuity.
Note also that \eqref{eq:diff_PV_M} provides another way of answering our first question of whether or not $PV_e$-and-$M$ inversion is truly elliptic:
the ellipticity of the coefficient matrix $A$ discussed above answers that question in the affirmative.
A final remark on the regularity theory is that it is of key importance for applications.
Indeed, one often seeks to identify the quantities which jump across the interface and the quantities which remain continuous across that interface.
The regularity theory we develop allows us to identify these quantities, most notably determining that the gradient of the pressure is H\"{o}lder continuous across the interface.

In the last section of this paper we provide some computations on how to smooth out $PV_e$-and-$M$ inversion.
These computations help understand why analysing the ellipticity of $PV_e$-and-$M$ inversion directly from \eqref{eq:PV_M} is quite subtle, and they are also of independent interest.
Indeed, they provide fertile ground for further analysis: when solving $PV_e$-and-$M$ inversion numerically, it may be beneficial to first solve the smoothed-out version,
and then let the smoothing parameter decrease to zero.
It also remains to be seen whether the smoothed out interfaces are close to the true interface -- this would be guaranteed if we had $C^{1,\,\alpha}$ estimates on the distance between
the smoothed out minimizers and the true minimizer.
As a first step we provide $H^1$ estimates here.

Ultimately, this paper is also intended to bring the attention of the PDE community to a free boundary problem not yet studied.
At the end of this section we discuss how this problem differs from those most commonly studied in the literature.
The well-posedness result provided here is but the first step in the more careful analysis of this particular free boundary problem,
and also serves to give confidence in
numerical studies of the PQG system that rely on $\PV$-and-$M$ inversion (such as \cite{hu_edwards_smith_stechmann_21}).
Moreover, important open questions remain.
For instance we do not yet know anything about the regularity of the interface, even conditionally based on assumptions on the regularity and/or compatibility of $PV_e$ and $M$.

The rigorous analysis of moist atmospheric models has only recently attracted the attention of the PDE community.
As we review some of that work, we focus on the rigorous treatment of models involving phase transitions
(some earlier works studied moist quantities that were purely advected by the velocity field, without phase changes).

The first rigorous foray into moist atmospheric dynamics with phase changes can be traced back to a series of work
\cite{coti_zelati_temam_12, coti_zelati_fremond_temam_tribbia_13, bousquet_coti_zelati_temam_14}
which studies a two-phase (water vapour and condensed water) model where the velocity field is taken to be given.
The velocity is then evolved dynamically via the primitive equations in \cite{coti_zelati_huang_kukavica_temam_ziane}
and topography is considered in \cite{lian_ma}. The same model, but with non-constant water saturation ratio, is studied in \cite{temam_wu_15, teman_wang_16}.

There are then two branches of study for \emph{three}--phase models (which involve water vapour and distinguish two kinds of condensed water: cloud water and precipitating water).\footnote{Note that the version of the PQG model used here is a two-phase version in which the condensed water phase can be cloud water that does not fall, or precipitating water (rain) with prescribed fall speed, but the version here does not consider co-existence of clouds and rain or the conversion from cloud water to rain water. Other versions of precipitating quasi-geostrophic models, with three phases of water or other additional complexities in cloud microphysics, can also be defined; see discussion in \cite{PQG2017,wetzel_smith_stechmann_19,wetzel_smith_stechmann_20}.}
One branch relies on cloud microphysics from \cite{grabowski}, beginning with \cite{cao_hamouda_temam_tribbia} where the velocity is given.
In \cite{tan_liu} the velocity then solves the primitive equations.
Another branch relies on cloud microphysics from \cite{klein_majda_06}, beginning with \cite{hittmeir_klein_li_titi_17} where the velocity is given.
In \cite{hittmeir_klein_li_titi_20} the velocity solves the primitive equations instead.

The incorporation of additional phases due to ice is discussed in \cite{cao_jia_temam_tribbia}.
By contrast, earlier works which omit cloud ice are thus sometimes referred to as ``warm cloud'' models.

It is important to note that all works mentioned above either treat the velocity as prescribed or as governed by the primitive equations.
Following \cite{cao_titi}, a well-understood framework relying on the barotropic--baroclinic decomposition of the velocity is in place for the analysis of the primitive equations and its extensions.
By contrast, and to the best of the authors' knowledge, there are as of yet no rigorous mathematical treatment of moist models leveraging other dynamical models for the velocity.

One such dynamical model is the precipitating quasi-geostrophic system discussed here.
In order for this system to be well-formulated as a nonlocal transport system (let alone well-posed, since that will be the object of future work), $\PV$-and-$M$ inversion needs to be understood.
We thus provide here the first building block in a framework amenable to the rigorous analysis of another dynamical model for moist atmospheric dynamics besides the primitive equations.

Free boundary problems have by contrast garnered significant interest from the mathematical community.
Rather than provide an extensive review of the
literature,
we direct the reader to 
\cite{chen_shahgholina_vazquez, figalli_shahgholia}
and references therein.
Instead, we only highlight the key difference (and thus sources of interest) of $\PV$-and-$M$ inversion compared with oft-studied free boundary problems in the literature.  Here
the free boundary depends on the \emph{gradient} of the unknown, and not on the unknown itself.  The latter 
case is
the more typical free boundary problem known as the obstacle problem (see the book \cite{petrosyan_shahgholian_uraltseva}).

Note that the dependence of the free boundary on the gradient is not unseen in the literature --
it is studied for example as ``Model Problem B'', one of three instances of standard classes of free boundary problems studied in \cite{petrosyan_shahgholian_uraltseva}.
That being said, this ``Model Problem B'' is isotropic since the free boundary is the boundary of the set $ \left\{ \lvert \nabla u \rvert > 0 \right\} $,
whereas the problem of interest here is \emph{anisotropic} since
the \emph{vertical} derivative of $p$ 
determines the free boundary.
It is also worth noting that, although not unseen in the literature, gradient-dependent problems are less prominent.

\section{Background and derivation of $PV_e$-and-$M$ inversion}
\label{sec:background}

In this section we discuss the PQG equations in more detail since that model is recent and not yet well-known in the mathematical community.
In particular we provide a derivation of $PV_e$-and-$M$ inversion in both its form \eqref{eq:PV_M} and \eqref{eq:PV_M_min0}.
We omit any considerations of the dynamics of the PQG equations here, which is a rich and complex problem in and of itself.
For the sake of completeness we record the dynamic equations in \fref{Appendix}{sec:pqg}.

In order to motivate the form taken by $PV_e$-and-$M$ inversion, we will first discuss how $PV$ inversion arises in the dry case.
We will then see how these arguments are modified once moisture is incorporated as per the PQG model introduced in \cite{PQG2017}.
This will enable us to make the form of $PV_e$-and-$M$ inversion precise and to state the main results we obtain pertaining to the solvability of that inversion.

\textbf{The dry case.}
We begin with a discussion of the dry QG model. 
While the complete dynamics may be characterised solely in terms of the potential vorticity $PV,$
it is 
helpful to describe the system by introducing the
auxiliary unknowns: the
velocity $u$, the potential temperature\footnote{
	The potential temperature is a quantity defined in terms of the temperature and the (compressible) pressure.
	It is often more convenient to work with potential temperature than temperature itself (as is the case for dry QG) since, unlike the temperature,
	the \emph{potential} temperature is purely advected by the flow.
}
$\theta$, 
and the pressure $p$.

With these auxiliary variables in hand we can view the quasi-geostrophic system as a combination of three features. The first feature is that the potential vorticity defined as
$
	PV = {(\nabla\times u)}_3 + \partial_3 \theta
$
is purely advected, i.e. $\left( \partial_t + u\cdot\nabla \right) PV = 0$.
The second feature is that the velocity is determined by \emph{geostrophic balance} $u_h = \nabla_h^\perp p,$ which arises from the effect of the Coriolis force and the assumption of 
fast rotation.
In particular this means that the velocity has vanishing vertical component, and is thus purely horizontal, but still depends on all three spatial coordinates
(the velocity is \emph{not} two-dimensional). 
Here we have introduced the notation $v_h = (v_1,\, v_2, 0)$ to denote the horizontal components of any 3-vector $v,$
as well as the notation $w^\perp = (-w_2,\,w_1)$ to denote the $\frac{\pi}{2}$--rotation of any 2-vector $w$. Then we write $v_h^\perp = (-v_2,\,v_1,\,0)$.
The third feature is that the potential temperature is determined by \emph{hydrostatic balance} $\theta = \partial_3 p,$
which arises due to strong stratification.
Plugging these two balances into the definition of the potential vorticity tells us that
$
	PV = \Delta p,
$
and solving this elliptic PDE is known as \emph{$PV$ inversion}.

Noting that 
the velocity transporting the $PV$ 
may be reconstructed from the $PV$ itself via
$u = \nabla_h^\perp \Delta^{-1} PV,$ one can see that the QG system
is a nonlinear and nonlocal transport equation for the potential vorticity.
This is 
similar to how the vorticity form of the incompressible Euler equations is a nonlinear and nonlocal transport equation for the vorticity.

\textbf{The moist case.}
The question is now: ``What happens when moisture is incorporated into the model?''
In order to answer that question, we first recall that the moisture is modeled via a single scalar  unknown $q$ which describes the total water mixing ratio.
To be precise: $q$ is the mixing ratio measured as kilograms of total water per kilogram of dry air \cite{klein_majda_06, hernandez_duenas}.
For simplicity the saturation is taken to occur at $q=0$. This may be achieved without loss of generality by subtracting the mixing ratio at saturation, assumed here to be constant, from $q$.
We then introduced $H_u = \mathds{1} (q<0)$ and $H_s = 1-H_u = \mathds{1}(q\geqslant 0)$ to make it easier to see how 
physical relationships
vary depending on whether they occur in the unsaturated phase $ \left\{ q < 0 \right\}$ or in the saturated phase $ \left\{ q > 0 \right\}$.

While geostrophic balance remains the same once moisture is considered, 
hydrostatic balance only {\em appears} unaffected at first glance. Indeed, it is still true that $\partial_3 p = \theta$,
but it is now essential to split the potential temperature $\theta$ into two pieces, writing $\theta = \theta_e - q H_u$.
Here $\theta_e$ denotes the \emph{equivalent} potential temperature, and corresponds to the potential temperature that a parcel of air would have
if it were heated by converting its water vapour content into condensed water,
and $qH_u$ accounts for the latent heat stored in water vapour.
Once moisture is incorporated into the model, $\theta$ is no longer purely advected as was the case for dry dynamics, since source terms arise due to condensation and evaporation.
It is therefore $\theta_e$ which is purely advected, 
and is thus the variable used to define the moist potential vorticity $PV_e$. 
More precisely,  the moist potential vorticity is defined as $PV_e = {(\nabla\times u)}_3 + \partial_3 \theta_e$ (using $\theta_e$ and not $\theta$ as was done in the dry case).  

Physically speaking, it is important to introduce the concept of a slowly varying quantity whose dynamics is appropriate to describe temporal and spatial variations on synoptic scales.  In the atmospheric science literature, slowly varying quantities are also called ``balanced'' or simply ``slow.''
Mathematically, identification of appropriate slow variables follows from a distinguished limiting process of the governing equations, in this case  either the dry or moist rotating Boussinesq equations.
While dry $PV$ is slowly varying in the context of dry dynamics, it is moist $PV_e$ that is slowly varying in the presence of moisture and phase changes \cite{PQG2017}.  
By contrast, the variables $u$, $\theta_e$ and $q$ exhibit wave-like behavior, generally associated with smaller spatial scales and faster temporal scales. 
As slow variables in their respective dry and moist environments, $PV$ and $PV_e$ are of primary importance to meteorology because their dynamics are associated with synoptic-scale weather patterns.  
 
The description of moist synoptic-scale dynamics requires at least two slowly varying unknowns, instead of the single $PV$ variable in the dry case.
This is 
because 
there is a new unknown quantity $q$, without any new balances appearing (which would reduce the number of unknowns needed to fully describe the system).  Following from a distinguished limiting analysis of the two-phase moist Boussinesq equations, 
the new slow variable is denoted $M$ and defined as $M = \theta_e +q$ 
\cite{PQG2017}.

Note that we have, in the previous discussion and throughout the remainder of this paper, set a slew of physical constants to unity.
A ``Rosetta stone'' of sorts is provided in 
\fref{Appendix}{sec:rosetta}, which discusses the PQG model when these constants are present in their full glory.  \fref{Appendix}{sec:rosetta} acts as a dictionary between the notation of \cite{PQG2017} and this paper, 
and makes clear the fact that that there is no loss of generality in considering the simpler looking 
$PV_e$-and-$M$ inversion \eqref{eq:PV_M}.

\textbf{Derivation of $PV_e$-and-$M$ inversion.}
We are now ready to derive 
\eqref{eq:PV_M} and \eqref{eq:PV_M_min0}.
Using hydrostatic balance and the moist variable $M$ allows us to write the equivalent potential temperature $\theta_e$ as a function of $M$ and $\partial_3 p$ (see \fref{Corollary}{cor:q}).
Inserting that expression into the definition of the potential vorticity $PV_e$ produces the first form of $PV_e$-and-$M$ inversion recorded in \eqref{eq:PV_M}.

Alternatively, 
hydrostatic balance and the moist variable $M$ may also be used to write the water $q$ as a function of $M$ and $\partial_3 p$,
indicating that the sign of $q$ agrees everywhere with the sign of $M-\partial_3 p.$ 
This allows to rewrite $PV_e$-and-$M$ inversion as in \eqref{eq:PV_M_min0},
which is the formulation of $PV_e$-and-$M$ inversion used from now on.

\textbf{Brief summary of PQG literature.}
References
\cite{wetzel_smith_stechmann_19,wetzel_smith_stechmann_20} explain how $PV_e$-and-$M$ inversion can be used to extract the balanced component of a given atmospheric state, including the balanced parts of water, winds and temperature.  The authors emphasize that the $PV_e$-and-$M$ formulation of PQG is both balanced and invertible, whereas these key ingredients had not been combined together in previous studies of moist potential vorticity.  In order to achieve balance, the PQG equations were derived formally in \cite{PQG2017} as a distinguished asymptotic limit, and the limiting process has since garnered more attention from the analytical point of view in \cite{zhang_smith_stechmann_20_asymptotics}, and from the numerical point of view in \cite{zhang_smith_stechmann_20_numerics, zhang_smith_stechmann_22}.

Physical phenomena inherent in the PQG system have been explored through analytical and numerical studies aimed at specific classes of solutions that are important in meteorological applications \cite{wetzel_smith_stechmann_17, edwards_smith_stechmann_20, wetzel_smith_stechmann_19_fronts}.
When it comes to the study of the PQG equations ``at large'', i.e. away from specific classes of solutions, only numerical studies have been carried out so far
\cite{edwards_smith_stechmann_20_spectra, hu_edwards_smith_stechmann_21}.
It is worth noting that both of the numerical studies \cite{edwards_smith_stechmann_20_spectra, hu_edwards_smith_stechmann_21} 
consider the 2-level PQG system, in which the vertical dependence is simplified to include only two parallel horizontal slices.


\section{Variational formulation and basic properties of the energy}
\label{sec:var_form_and_prop_en}

In this section we begin working towards the proof of \fref{Theorems}{thm:wp}.
We begin by defining the energy, whose minimizers will be solutions of $\PV$-and-$M$ inversion, precisely.
We then study elementary properties of this energy pertaining to its convexity, coercivity, and differentiability.
These properties will then be used throughout the remainder of this paper.
Finally we conclude this section by proving that indeed minimizers of this energy are solutions of $\PV$-and-$M$ inversion.

Here is the energy in question.

\begin{definition}[Energy]
\label{def:energy}
	For any $M\in L^2$, $\PV\in H^{-1}$, and $p\in\mathring{H}^1$, where all spaces are taken over $\mathbb{T}^3$ and $H^{-1} \vcentcolon= {(\mathring{H}^1)}^*$, we define
	\begin{equation}
	\label{eq:def_en}
		E(p) \vcentcolon= \int_{\mathbb{T}^3} \frac{1}{2} {\lvert \nabla p \rvert}^2 - \frac{1}{4} {\min ( M - \partial_3 p,\, 0 )}^2 + \langle \PV,\, p \rangle.
	\end{equation}
\end{definition}

Here and throughout, unless specified otherwise, the pairing $ \langle \,\cdot\, , \,\cdot\, \rangle$ used in \eqref{eq:def_en} is the $H^{-1} \times \mathring{H}^1 $ duality pairing,
i.e. $ \langle \Lambda,\, p \rangle = \Lambda(p)$ for every $\Lambda\in H^{-1}$ and $p\in \mathring{H}^1 $.

As noted previously, an essential feature of the energy introduced in \fref{Definition}{def:energy} above is the lack of smoothness due to the presence of the minimum function.
As an intermediate step, it is therefore often useful to work with a smoothed out version of the problem. This is done by employing mollifiers.
To avoid any confusion with differing conventions we record below what we mean by a standard mollifier.

\begin{definition}[Mollifier]
\label{def:mollifier}
	A \emph{standard mollifier} is a non-negative smooth function $\varphi:\mathbb{R}\to\mathbb{R}$ whose support is contained in $(-1,\,1)$ and which is \emph{normalised},
	meaning that $\int \varphi = 1$. Moreover we say that $\varphi$ is \emph{centered} if $\int y \varphi (y) dy = 0$ -- i.e. the probability distribution $\varphi(y) dy$ has mean zero.
	We will often equivalently refer to $\varphi_\varepsilon$ as a mollifier to mean the family of mollifiers
	defined by $ \varphi_\varepsilon  \vcentcolon= \frac{1}{\varepsilon} \varphi \left( \frac{\cdot}{\varepsilon} \right)$.
\end{definition}

We now turn our attention to the study of the convexity of the energy.
First we show that the energy density is strongly convex -- see \fref{Lemma}{lemma:equiv_charac_strg_conv} for a reminder of equivalent characterisations of strong convexity.

\begin{lemma}[Strong convexity of the energy density]
\label{lemma:strg_cnvx_e}
	For any fixed $r\in\mathbb{R}$ the function $e:\mathbb{R}^3\to\mathbb{R}$ defined as $e(u) \vcentcolon= \frac{1}{2} {\lvert u \rvert}^2 - \frac{1}{4} \min {\left( r - u_3,\, 0 \right)}^2$
	is $ \frac{1}{2} $-strongly convex.
\end{lemma}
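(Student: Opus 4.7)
The idea is to isolate the non-smooth piece of $e$ using the pointwise identity
\begin{equation*}
\min(s, 0)^2 = s^2 - (s^+)^2, \qquad s \in \mathbb{R},
\end{equation*}
which is immediate from a case analysis on the sign of $s$ (equivalently, from writing $s = s^+ - s^-$ and observing $s^+ s^- = 0$). Applying this with $s = r - u_3$ produces the decomposition
\begin{equation*}
e(u) = \underbrace{\tfrac{1}{2} \lvert u \rvert^2 - \tfrac{1}{4}(r - u_3)^2}_{=:\, q(u)} \;+\; \tfrac{1}{4} \bigl( (r - u_3)^+ \bigr)^2,
\end{equation*}
which splits $e$ into a smooth quadratic and a non-smooth correction whose non-smoothness will turn out to be harmless.

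I would then handle each piece separately. Expanding $q$ shows its Hessian is the constant diagonal matrix $\mathrm{diag}(1, 1, 1/2)$ (the $\tfrac{1}{4}(r - u_3)^2$ subtracts off exactly half of the vertical curvature of $\tfrac{1}{2}\lvert u \rvert^2$, and contributes only linear and constant terms in the other variables), so its smallest eigenvalue is $1/2$; by the equivalent characterisations recalled in \fref{Lemma}{lemma:equiv_charac_strg_conv} this is precisely $\tfrac{1}{2}$-strong convexity of $q$. For the correction, $(r - u_3)^+$ is the positive part of an affine function, hence a non-negative convex function of $u$, and squaring preserves convexity on $[0, \infty)$; so $\tfrac{1}{4}\bigl( (r - u_3)^+ \bigr)^2$ is convex.

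The conclusion is then immediate from the standard fact that the sum of a $\tfrac{1}{2}$-strongly convex function and a convex function is again $\tfrac{1}{2}$-strongly convex, which itself is a one-line consequence of the Hessian-shift characterisation (adding a convex function does not destroy convexity of $e - \tfrac{1}{4}\lvert \cdot \rvert^2$). There is no genuine obstacle here: the only sliver of creativity is spotting the identity $\min(s,0)^2 = s^2 - (s^+)^2$, after which the argument reduces to an elementary Hessian computation and an "obviously convex" observation about a squared positive part.
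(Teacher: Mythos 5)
Your proof is correct, but it takes a genuinely different route from the paper's. The paper proceeds by mollification: it smooths the function $f_0(s) = \tfrac{1}{2}\min(s,0)^2$ by convolution with a mollifier $\varphi_\varepsilon$, computes the Hessian $\nabla^2 e_\varepsilon = I - \tfrac{1}{2} f_\varepsilon''(r-u_3)\, e_3 \otimes e_3 \geqslant \tfrac{1}{2} I$ using the bound $\lVert f_\varepsilon'' \rVert_{L^\infty} \leqslant \operatorname{Lip}(f_0') = 1$, concludes $\tfrac{1}{2}$-strong convexity of $e_\varepsilon$ via the second-order characterisation, and then passes to the limit $\varepsilon \to 0$ in a first-order characterisation of strong convexity. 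Your argument sidesteps mollification entirely via the algebraic identity $\min(s,0)^2 = s^2 - (s^+)^2$, which isolates a smooth quadratic $q$ with exact Hessian $\operatorname{diag}(1,1,\tfrac{1}{2})$ and a convex correction $\tfrac{1}{4}\bigl((r-u_3)^+\bigr)^2$; the conclusion then follows because adding a convex function preserves $\mu$-strong convexity. Your route is cleaner and more elementary, as it makes the structure of the non-smoothness transparent and avoids the regularisation and limit passage; the paper's approach is more of a black-box technique, though it has the virtue of reusing mollifier machinery that is set up anyway for \fref{Section}{sec:smoothing}. One small attribution nit: the Hessian computation for $q$ appeals to the second-order characterisation, i.e.\ \fref{Lemma}{lemma:second_order_charac_strg_conv}, not the first-order characterisations in \fref{Lemma}{lemma:equiv_charac_strg_conv} as you wrote.
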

\begin{proof}
	Write $e(u) = \frac{1}{2} {\lvert u \rvert}^2 - \frac{1}{2} f_0 (r - u_3)$ for $f_0(s) \vcentcolon= \frac{1}{2} {\min(s,\,0)}^2$.
	For $\varphi_\varepsilon$ a \hyperref[def:mollifier]{standard mollifier} define $f_\varepsilon \vcentcolon= f_0 * \varphi_\varepsilon$
	and $e_\varepsilon (u) \vcentcolon= \frac{1}{2}  {\lvert u \rvert}^2 - \frac{1}{2}  f_\varepsilon ( r-u_3)$.
	Then we have that
	$
		\nabla^2 e_\varepsilon (u) = I - \frac{1}{2} f_\varepsilon '' (r - u_3) e_3 \otimes e_3
	$
	where $ \norm{f_\varepsilon''}{L^\infty} \leqslant \text{Lip} (f_0') = 1$ since $f_0'(s) = \min(s,\,0)$,
	and hence $\nabla^2 e_\varepsilon \geqslant \frac{1}{2} I$ which shows, by virtue of \fref{Lemma}{lemma:second_order_charac_strg_conv}
	that $e_\varepsilon$ is $ \frac{1}{2} $-strongly convex.
	We may then pass to the limit as $\varepsilon\to 0$ in any of the \emph{first-order} characterisations of strong convexity of \fref{Lemma}{lemma:equiv_charac_strg_conv} to  deduce
	(since $f_0 \in C^1$, and hence $e\in C^1$ such that $e_\varepsilon \to g$ in $C^1$) that $e$ itself is $\frac{1}{2}$-strongly convex.
\end{proof}

We continue studying the convexity of the energy, now proving that the energy itself is strongly convex.
The weak lower semi-continuity of the energy then follows.
We also record a coercivity estimate for the energy.

\begin{prop}[Properties of the energy]
\label{prop:prop_en}
	The energy $E$ introduced in \fref{Definition}{def:energy} is $\frac{1}{2}$-strongly convex and weakly lower semi-continuous over $\mathring{H}^1$.
	Moreover $E$ is coercive over $\mathring{H}^1$ in the sense that 
	$
		E(p) \geqslant \frac{1}{16}\norm{p}{ \mathring{H}^1 }^2 - \frac{3}{4} \norm{M}{L^2}^2 - 4 \norm{\PV}{H^{-1}}^2
	$
	for every $p\in \mathring{H}^1 $.
\end{prop}
\begin{proof}
	We may write
	\begin{equation}
	\label{eq:write_E_using_e_M}
		E(p) = \int_{\mathbb{T}^3} e_M \left( x,\, \nabla p(x) \right) dx + \langle \PV,\,p \rangle
	\end{equation}
	for $e_M(x,\,u) \vcentcolon= \frac{1}{2} {\lvert u \rvert}^2 - \frac{1}{4} {\min \left( M(x) - u_3,\, 0 \right)}^2$.
	\fref{Lemma}{lemma:strg_cnvx_e} then tells us that, for any $M\in L^2$, $e_M (x, \,\cdot\,)$ is $ \frac{1}{2} $-strongly convex for almost every $x$ in $\mathbb{T}^3$,
	and so the strong convexity of $E$ follows.
	The weak lower semi-continuity then follows from \fref{Lemma}{lemma:cvx_en_implies_wk_lsc} since strong convexity implies convexity.
	We now turn our attention to the coercivity of $E$.
	We introduce $H_u \vcentcolon= \mathds{1} (M < \partial_3 p)$ and $H_s \vcentcolon= 1-H_u = \mathds{1} (M \geqslant \partial_3 p)$ to rearrange $E$ such that
	\begin{align}
		E(p) - \langle \PV,\,p \rangle
	\label{eq:prop_en_int}
		= \int_{\mathbb{T}^3} \frac{1}{2} {\lvert \nabla p \rvert}^2  H_s + \Bigg(
			\frac{1}{2} {\lvert \nabla_h p \rvert}^2 + \underbrace{
				\frac{1}{4} {(\partial_3 p)}^2 + \frac{1}{2} M \partial_3 p - \frac{1}{4} M^2
			}_{ =\vcentcolon (\star) }
		\Bigg) H_u.
	\end{align}
	We note that, by Cauchy-Schwarz, $(\star) \geqslant \frac{1}{8} {(\partial_3 p)}^2 - \frac{3}{4} M^2$.
	Combining this inequality with \eqref{eq:prop_en_int} above we see that $E(p) - \langle \PV,\, p \rangle \geqslant \frac{1}{8} \norm{p}{ \mathring{H}^1 }^2 - \frac{3}{4} \norm{M}{L^2}$.
	To conclude we apply Cauchy-Schwarz to $ \langle \PV,\, p \rangle$.
\end{proof}

\begin{remark}[Non-negativity of the energy]
\label{rmk:pos_en}
	\fref{Proposition}{prop:prop_en} shows that the quadratic part of $E$, namely $E - \PV$, may not necessarily be non-negative.
	Indeed, for $M = -\partial_3 p$ and $\nabla_h p = 0$ we read from \eqref{eq:prop_en_int} that $E(p) - \langle \PV,\, p \rangle = \int_{\mathbb{T}^3} \frac{1}{2} M^2 (H_s - H_u)$.
	If one sought to work with an energy whose quadratic part was non-negative this issue would be easily remedied by adding $ \frac{1}{2}  \int_{\mathbb{T}^3} M^2$ to the energy:
	this has no bearing on the minimization of the energy with respect to $p$ and a Cauchy-Schwarz argument as in the proof of \fref{Proposition}{prop:prop_en}
	shows that then $E - \PV + \frac{1}{2} \norm{M}{L^2} \geqslant 0$.
	(This is actually optimal: the example above where $M = - \partial_3 p$ shows that adding any smaller multiple of the $L^2$ norm of $M$ to the energy would not
	be sufficient to guarantee the non-negativity of its quadratic part.)
\end{remark}

Having studied the convexity and coercivity of the energy we now turn our attention towards its differentiability.
First we show that the energy is G\^{a}teaux differentiable and obtain an expression for its derivative.

\begin{lemma}[G\^{a}teaux differentiability of the energy]
\label{lemma:Gateaux_diff_energy}
	The energy $E$ introduced in \fref{Definition}{def:energy} is G\^{a}teaux differentiable on $\mathring{H}^1$ with G\^{a}teaux derivative at any $p\in\mathring{H}^1$,
	denoted by $DE(p)$, given by
	\begin{equation}
	\label{eq:Gateaux_diff_en}
		DE(p) \phi = \int_{\mathbb{T}^3} \nabla p \cdot \nabla\phi + \frac{1}{2} \min ( M-\partial_3 p,\, 0) \partial_3 \phi
			+ \langle \PV,\, \phi \rangle \text{ for every } \phi\in\mathring{H}^1.
	\end{equation}
\end{lemma}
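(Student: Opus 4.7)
The plan is to compute the one-sided limit $\lim_{t \to 0} t^{-1}\bigl(E(p + t\phi) - E(p)\bigr)$ directly via a dominated convergence argument, after decomposing $E$ into three pieces: the Dirichlet term $\tfrac{1}{2}\int \lvert\nabla p\rvert^2$, the nonsmooth term $-\tfrac{1}{4}\int \min(M - \partial_3 p,\, 0)^2$, and the linear pairing $\langle PV_e,\, p\rangle$. The Dirichlet piece expands exactly to $\int \nabla p \cdot \nabla \phi + \tfrac{t}{2}\int\lvert\nabla\phi\rvert^2$, whose second summand vanishes as $t \to 0$, and the linear pairing contributes $\langle PV_e,\, \phi\rangle$ for every $t$ by linearity.

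The only substantive work concerns the nonsmooth piece. Set $f_0(s) \vcentcolon= \tfrac{1}{2}\min(s,\, 0)^2$, so this term equals $-\tfrac{1}{2}\int f_0(M - \partial_3 p)$. The key observation is that $f_0 \in C^{1,1}(\mathbb{R})$ with $f_0'(s) = \min(s,\, 0)$ being $1$-Lipschitz, which yields the quadratic remainder bound
\[
\bigl\lvert f_0(s + h) - f_0(s) - f_0'(s)\,h \bigr\rvert \leqslant \tfrac{1}{2} h^2 \qquad \text{for all } s,\, h \in \mathbb{R}.
\]
Applied pointwise with $s = M - \partial_3 p(x)$ and $h = -t\,\partial_3\phi(x)$, this delivers both the a.e.\ pointwise convergence
\[
\frac{f_0(M - \partial_3 p - t\,\partial_3\phi) - f_0(M - \partial_3 p)}{t} \longrightarrow -\min(M - \partial_3 p,\, 0)\,\partial_3 \phi
\]
as $t \to 0$, and the domination
\[
\left\lvert \frac{f_0(M - \partial_3 p - t\,\partial_3\phi) - f_0(M - \partial_3 p)}{t} \right\rvert \leqslant \lvert \min(M - \partial_3 p,\, 0)\rvert \,\lvert \partial_3\phi \rvert + \tfrac{1}{2}\lvert t \rvert\,\lvert\partial_3\phi\rvert^2.
\]
Since $M - \partial_3 p \in L^2$ (because $M \in L^2$ by hypothesis and $p \in \mathring{H}^1$) and $\partial_3 \phi \in L^2$, the first summand on the right lies in $L^1$ by Cauchy--Schwarz, while the second is controlled by $\tfrac{1}{2}\lvert\partial_3\phi\rvert^2 \in L^1$ for all $\lvert t\rvert \leqslant 1$. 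The Dominated Convergence Theorem then yields the contribution $\tfrac{1}{2}\int \min(M - \partial_3 p,\, 0)\,\partial_3\phi$, and assembling the three pieces produces exactly \eqref{eq:Gateaux_diff_en}; linearity of the resulting functional in $\phi$ is then manifest from the formula.

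The only conceptual subtlety lies in the nonsmoothness of $\min(\,\cdot\,,\, 0)^2$, but this is absorbed by the fact that one full derivative's worth of smoothness remains, namely $f_0 \in C^{1,1}$. As an alternative route, one could mirror the mollification strategy employed in the proof of \fref{Lemma}{lemma:strg_cnvx_e}: compute $DE_\varepsilon$ classically for the smoothed energy $E_\varepsilon$ obtained by replacing $f_0$ with $f_\varepsilon = f_0 \ast \varphi_\varepsilon$, then pass $\varepsilon \to 0$ using the uniform bounds $\norm{f_\varepsilon'}{L^\infty(\mathbb{R})} \leqslant \lvert \,\cdot\, \rvert$ and $\mathrm{Lip}(f_\varepsilon') \leqslant 1$ inherited from those of $f_0'$. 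Both approaches yield the same conclusion, and the direct argument sketched above is marginally the more transparent of the two.
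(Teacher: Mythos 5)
Your proof is correct and takes essentially the same route as the paper's: both reduce the matter to differentiating the term involving $f_0(s) = \tfrac12\min(s,0)^2$ under the integral sign via a dominated convergence argument (the paper invokes the differentiation-under-the-integral theorem in Folland, which is just DCT applied to difference quotients, which is what you do directly). Your explicit quadratic remainder estimate $\lvert f_0(s+h)-f_0(s)-f_0'(s)h\rvert\leqslant\tfrac12 h^2$ is a nice way to furnish the dominating function in one stroke; the paper leaves the domination implicit.
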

\begin{proof}
	Since $\PV\in H^{-1}$ we know that $p\mapsto \langle \PV,\, p \rangle$ is G\^{a}teaux differentiable (and equal to its derivative)
	and so we focus our attention on the remaining terms in the energy, namely
	$
		E_0 (p) \vcentcolon= \int_{\mathbb{T}^3} \frac{1}{2} {\lvert \nabla p \rvert}^2 - \frac{1}{4} {\min (M - \partial_3 p,\, 0)}^2.
	$
	It suffices to show that $E_0$ is G\^{a}teaux differentiable with
	\begin{equation}
	\label{eq:Gat_diff_int_1}
		DE_0 (p) \phi
		= \int_{\mathbb{T}^3} \nabla p\cdot\nabla \phi + \frac{1}{2} \min ( M - \partial_3 p,\, 0) \partial_3 \phi
		\text{ for every } \phi \in \mathring{H}^1.
	\end{equation}
	To do that we fix $p,\,\phi\in\mathring{H}^1$ and define
	\begin{equation*}
		\tilde{e}(t,\,x) \vcentcolon= \frac{1}{2} {\lvert \nabla p (x) + t \nabla\phi (x) \rvert}^2 
		- \frac{1}{4} {\min \left( M(x) - \left[ \partial_3 p (x) + t \partial_3 \phi (x) \right] ,\, 0 \right)}^2
	\end{equation*}
	such that
	$
		E_0 (p + t\phi) = \int_{\mathbb{T}^3} \tilde{e}(t,\,x) dx.
	$
	Since $\tilde{e}(t,\,\cdot\,)\in L^1(\mathbb{T}^3)$ for all $t$ with
	\begin{equation}
	\label{eq:Gat_diff_int_2}
		\frac{\partial \tilde{e}}{\partial t} (t,\,x)
		= \left[ \nabla p (x) + t\nabla \phi (x) \right] \cdot \nabla \phi (x)
		+ \frac{1}{2} \min \left( M(x) - \left[ \partial_3 p (x) + t\partial_3 \phi (x) \right],\, 0 \right) \partial_3 \phi (x)
	\end{equation}
	an application of the Dominated Convergence Theorem (more precisely Theorem 2.27 in \cite{folland}) tells us that $t \mapsto E_0 (p+t\phi)$ is differentiable, with
	$
	\frac{\mathrm{d}}{\mathrm{d}t} E_0 (t + t\phi) = \int_{\mathbb{T}^3} \frac{\partial \tilde{e}}{\partial t}.
	$
	Evaluating at $t=0$ and using \eqref{eq:Gat_diff_int_2} then yields \eqref{eq:Gat_diff_int_1}.
\end{proof}

We now bootstrap up from G\^{a}teaux to Fr\'{e}chet differentiability in light of the continuity of the G\^{a}teaux derivative of the energy.

\begin{cor}[Fr\'{e}chet differentiability of the energy]
\label{cor:Frechet_diff_en}
	The G\^{a}teaux derivative of the energy $E$ introduced in \fref{Definition}{def:energy} is Lipschitz, with specifically the estimate
	$
		\norm{ DE(p_1) - DE(p_2) }{H^{-1}} \leqslant \frac{3}{2} \norm{p_1 - p_2}{\mathring{H}^1}
	$
	for every $p_1,\,p_2\in \mathring{H}^1 $,
	and so $E$ is Fr\'{e}chet differentiable.
\end{cor}
\begin{proof}
	This follows from the expressions for $DE$ recorded in \fref{Lemma}{lemma:Gateaux_diff_energy} and a simple estimate using the fact that the map $s\mapsto \min (s,\,0)$ is $1$-Lipschitz.
\end{proof}

With the Fr\'{e}chet differentiability of the energy in hand we may now obtain a simple (but very handy!) estimate that shows that
as a consequence of its strong convexity, the energy essentially acts as a norm about its minimizer.

\begin{lemma}[The energy is essentially a norm abouts its minimizer]
\label{lemma:en_is_norm_near_min}
	Suppose that the energy $E$ introduced in \fref{Definition}{def:energy} has a minimizer $p^* \in \mathring{H}^1 $.
	For any $p\in \mathring{H}^1$ the following estimate holds:
	$
		\norm{p - p^*}{\mathring{H}^1}^2
		\leqslant 4 \left( E(p) - E(p^*) \right)
		= 4 \left( E(p) - \min E \right).
	$
\end{lemma}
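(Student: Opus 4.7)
The plan is to exploit two facts already established in the paper: first, by \fref{Proposition}{prop:prop_en} the energy $E$ is $\frac{1}{2}$-strongly convex on $\mathring{H}^1$; second, by \fref{Corollary}{cor:Frechet_diff_en} it is Fr\'{e}chet differentiable, and its G\^{a}teaux derivative at a minimizer must vanish.

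Here is how I would carry this out. First, I would invoke the first-order characterisation of $\mu$-strong convexity provided in \fref{Lemma}{lemma:equiv_charac_strg_conv}, which (with $\mu = \frac{1}{2}$) gives
\begin{equation*}
	E(p) \geqslant E(p^*) + DE(p^*)(p - p^*) + \frac{1}{4} \norm{p - p^*}{\mathring{H}^1}^2
	\quad\text{for every } p \in \mathring{H}^1.
\end{equation*}
Second, I would note that since $p^*$ is a minimizer of $E$, the map $t \mapsto E(p^* + t\phi)$ attains its minimum at $t=0$ for every $\phi \in \mathring{H}^1$, so its derivative at $t=0$, which by \fref{Lemma}{lemma:Gateaux_diff_energy} equals $DE(p^*)\phi$, must vanish. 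Hence $DE(p^*) = 0$ as an element of $H^{-1}$.

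Combining these two observations immediately yields $E(p) - E(p^*) \geqslant \frac{1}{4} \norm{p - p^*}{\mathring{H}^1}^2$, which rearranges to the claimed inequality. I do not anticipate any real obstacle: the only subtlety is checking that the strong-convexity characterisation being cited is indeed the correct first-order form, which is supplied by the appendix lemma referenced in the proof of \fref{Lemma}{lemma:strg_cnvx_e}. The identity $E(p^*) = \min E$ is then just the definition of a minimizer.
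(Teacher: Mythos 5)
Your proposal is correct and follows essentially the same route as the paper: both invoke the first-order characterisation \eqref{eq:equiv_charac_strg_conv_2} of $\frac{1}{2}$-strong convexity from \fref{Proposition}{prop:prop_en} together with $DE(p^*)=0$ at the minimizer (with Fr\'{e}chet differentiability from \fref{Corollary}{cor:Frechet_diff_en} ensuring the derivative is well-defined). The only cosmetic difference is that you spell out why the derivative vanishes at the minimizer, which the paper takes as implicit.
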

\begin{proof}
	\fref{Corollary}{cor:Frechet_diff_en} tells us that $E$ is Fr\'{e}chet differentiable and so the result follows from
	combining \fref{Proposition}{prop:prop_en} and \fref{Lemma}{lemma:equiv_charac_strg_conv}.
	Since $E$ is $ \frac{1}{2} $-strongly convex and $DE(p^*) = 0$ at the minimizer we obtain that, for any $p\in\mathring{H}^1$,
	\begin{equation*}
		E(p) - E(p^*)
		\geqslant \langle DE(p^*),\, p-p^* \rangle + \frac{1/2}{2} \norm{p - p^*}{\mathring{H}^1}^2
		= \frac{1}{4} \norm{p-p^*}{\mathring{H}^1}^2,
	\end{equation*}
	as desired.
\end{proof}

We now conclude this section by proving what is perhaps its main result,
namely that minimizers of the energy introduced in \fref{Definition}{def:energy} are precisely the solutions of $\PV$-and-$M$ inversion.

\begin{lemma}[Variational formulation of $\PV$-and-$M$ inversion]
\label{lemma:var_form_PV_and_M_inv}
	For any $p \in \mathring{H}^1 $, $p$ is a global minimizer of $E$ introduced in \fref{Definition}{def:energy} if and only if
	it is an $ \mathring{H}^1 $-weak solution of
	\begin{equation}
	\label{eq:PV_M_inv_strong}
		- \Delta p - \frac{1}{2} \partial_3 \left( \min ( M-\partial_3 p,\,\ 0) \right) = -\PV
	\end{equation}
	in the sense that
	\begin{equation}
	\label{eq:PV_M_inv_weak}
		\int_{\mathbb{T}^3} \nabla p \cdot \nabla \phi + \frac{1}{2} \min (M - \partial_3 p,\, 0) (\partial_3 \phi)
		= - \langle \PV,\, \phi \rangle
		\text{ for every } \phi\in \mathring{H}^1 .
	\end{equation}
\end{lemma}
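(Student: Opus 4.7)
The statement is the standard equivalence between ``critical point'' and ``global minimizer'' for a convex, differentiable functional, specialized to the explicit formula for $DE$ derived in \fref{Lemma}{lemma:Gateaux_diff_energy}. The plan is to reduce both directions to the identity $DE(p)\phi = 0$ for all $\phi \in \mathring{H}^1$, and then read off the weak formulation \eqref{eq:PV_M_inv_weak} directly from \eqref{eq:Gateaux_diff_en}.

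\emph{First step (weak formulation $\Leftrightarrow$ vanishing G\^{a}teaux derivative).} I would observe that the expression for $DE(p)\phi$ provided in \fref{Lemma}{lemma:Gateaux_diff_energy} is, up to a rearrangement, exactly the left-hand side of \eqref{eq:PV_M_inv_weak} minus $-\langle PV_e, \phi \rangle$. Consequently, saying that $p$ is a weak solution of \eqref{eq:PV_M_inv_strong} in the sense of \eqref{eq:PV_M_inv_weak} is literally equivalent to the vanishing condition $DE(p)\phi = 0$ for every $\phi \in \mathring{H}^1$. This reduces the lemma to showing that $p$ is a global minimizer of $E$ if and only if $DE(p) = 0$ in $H^{-1}$.

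\emph{Second step (minimizer $\Rightarrow$ critical point).} Suppose $p$ is a global minimizer. Then for any $\phi \in \mathring{H}^1$, the real-valued function $t \mapsto E(p + t\phi)$ attains its minimum at $t=0$. Because $E$ is G\^{a}teaux (indeed Fr\'{e}chet) differentiable by \fref{Corollary}{cor:Frechet_diff_en}, this scalar function is differentiable at $0$ with derivative $DE(p)\phi$, which must therefore vanish. Since $\phi$ was arbitrary, $DE(p) = 0$.

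\emph{Third step (critical point $\Rightarrow$ minimizer).} Conversely, suppose $DE(p) = 0$. By \fref{Proposition}{prop:prop_en} the energy $E$ is $\tfrac{1}{2}$-strongly convex, hence in particular convex, and together with Fr\'{e}chet differentiability this yields the first-order characterization $E(q) \geqslant E(p) + \langle DE(p),\, q-p\rangle$ for every $q \in \mathring{H}^1$ (the relevant equivalent characterization of convexity appealed to in \fref{Lemma}{lemma:equiv_charac_strg_conv}). With $DE(p) = 0$, this immediately gives $E(q) \geqslant E(p)$ for all $q$, so $p$ is a global minimizer. Combining the two steps with the first step concludes the proof.

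\emph{Expected obstacle.} There is no real obstacle: every ingredient has already been established in this section. The only thing requiring minor care is the bookkeeping in matching the signs in \eqref{eq:Gateaux_diff_en} with those in \eqref{eq:PV_M_inv_weak}, and pointing to the correct convexity characterization (first-order inequality for a convex, differentiable functional) rather than strong convexity per se, since convexity alone suffices for the reverse direction.
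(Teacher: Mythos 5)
Your proof is correct and follows essentially the same approach as the paper: reduce to the condition $DE(p)=0$ via \fref{Lemma}{lemma:Gateaux_diff_energy}, then invoke differentiability and convexity (from \fref{Corollary}{cor:Frechet_diff_en} and \fref{Proposition}{prop:prop_en}) to equate critical points with global minimizers. You simply unpack the steps that the paper compresses into ``the claim then follows immediately,'' and your remark that convexity (not strong convexity) suffices for the reverse direction is accurate.
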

\begin{proof}
	\fref{Lemma}{lemma:Gateaux_diff_energy} tells us that \eqref{eq:PV_M_inv_weak} is equivalent to $DE(p) = 0$.
	Since $E$ is differentiable and strictly convex (c.f. \fref{Corollary}{cor:Frechet_diff_en} and \fref{Proposition}{prop:prop_en}) the claim then follows immediately.
\end{proof}


\section{Well-posedness}
\label{sec:wp}

In this section we use the variational formulation of $\PV$-and-$M$ inversion to show that the problem is well-posed.
We first show that unique solutions exist in \fref{Theorem}{thm:exist_and_unique} and then turn our attention to their regularity.

As discussed previously, the de Giorgi theory is central to that matter of regularity and so
the first main result of this section is \fref{Theorem}{thm:Holder_cty_grad} in which the H\"{o}lder regularity of the gradient of solutions of $\PV$-and-$M$ inversion is established.
With this key result in hand we then turn our attention to the matter of higher regularity, establishing sufficient conditions on $\PV$ and $M$ for the solutions to be classical, in each phase,
in \fref{Theorem}{thm:classical_sols_each_phase} and finally establishing sufficient conditions on $\PV$ and $M$ for the solutions to be smooth in each phase in \fref{Theorem}{thm:higher_reg}.

We conclude this section with a discussion of how to construct explicit one-dimensional ($x_3$--dependent) solutions, which in particular show that the regularity theory developed here is nearly sharp.

First, as promised, we establish existence and uniqueness.

\begin{theorem}[Existence and uniqueness]
\label{thm:exist_and_unique}
	For every $M\in L^2$, and $\PV\in H^{-1}$ the energy $E$ introduced in \fref{Definition}{def:energy}
	has a unique global minimizer in $\mathring{H}^1$ which is also the unique weak solution of \eqref{eq:PV_M_inv_weak}.
\end{theorem}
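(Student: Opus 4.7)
The plan is to apply the Direct Method of the Calculus of Variations, for which essentially all the ingredients have already been set up in the previous section. Since $ \mathring{H}^1 $ is a reflexive Hilbert space and \fref{Proposition}{prop:prop_en} gives us coercivity, weak lower semi-continuity, and strong convexity of $E$ on this space, existence and uniqueness of a minimizer should follow in a quite standard manner. The equivalence between minimizers of $E$ and weak solutions of \eqref{eq:PV_M_inv_weak} has then already been established in \fref{Lemma}{lemma:var_form_PV_and_M_inv}, so the theorem reduces to proving the existence of a unique minimizer of $E$.

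For existence, the plan is as follows. First, I would observe that the coercivity estimate from \fref{Proposition}{prop:prop_en} implies in particular that $\inf_{\mathring{H}^1} E > -\infty$, so we may select a minimizing sequence $(p_n) \subset \mathring{H}^1$ with $E(p_n) \to \inf E$. Applying the coercivity estimate to this sequence produces a uniform bound on $\norm{p_n}{\mathring{H}^1}$. By reflexivity of $\mathring{H}^1$, we may then extract a subsequence (not relabelled) converging weakly to some $p^* \in \mathring{H}^1$. Invoking the weak lower semi-continuity of $E$ from \fref{Proposition}{prop:prop_en} yields
\begin{equation*}
	E(p^*) \leqslant \liminf_{n\to\infty} E(p_n) = \inf_{\mathring{H}^1} E,
\end{equation*}
so $p^*$ is a global minimizer.

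Uniqueness then follows immediately from the $\frac{1}{2}$-strong convexity of $E$: if $p_1^*$ and $p_2^*$ were two distinct minimizers, then strong convexity evaluated at the midpoint (or equivalently the estimate of \fref{Lemma}{lemma:en_is_norm_near_min}) would force $\norm{p_1^* - p_2^*}{\mathring{H}^1} = 0$, a contradiction. Finally, the equivalence between minimizers and weak solutions provided by \fref{Lemma}{lemma:var_form_PV_and_M_inv} transfers existence and uniqueness from one formulation to the other.

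There is no real obstacle here -- the theorem is essentially a clean consequence of the groundwork laid in \fref{Section}{sec:var_form_and_prop_en}. The only mild point of care is to verify that the weak lower semi-continuity established in \fref{Proposition}{prop:prop_en} is indeed applicable to our minimizing sequence (which it is, since weak convergence in $\mathring{H}^1$ is precisely what is needed). The substantive mathematical work went into showing strong convexity of the (non-smooth!) energy density and the coercivity estimate, both of which are now available as black boxes.
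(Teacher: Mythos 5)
Your proposal is correct and follows essentially the same route as the paper: the Direct Method (coercivity yields a bounded minimizing sequence, reflexivity of $\mathring{H}^1$ yields a weak limit, weak lower semi-continuity shows the limit is a minimizer), strict/strong convexity for uniqueness, and then \fref{Lemma}{lemma:var_form_PV_and_M_inv} to identify minimizers with weak solutions. The only cosmetic difference is that the paper cites Kakutani's theorem for the weak compactness step, whereas you invoke reflexivity directly; these are the same fact.
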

\begin{proof}
	For any $p\in \mathring{H}^1$, $M\in L^2$, and $\PV \in H^{-1}$ the energy $E(p)$ is finite
	and the coercivity of the energy recorded in \fref{Proposition}{prop:prop_en} tells us that $E(p)$ is bounded below independently of $p$.
	Therefore $\inf_{\mathring{H}^1} E$ is well-defined.
	We may thus take a minimizing sequence $(p_n ) \subseteq \mathring{H}^1$ satisfying $E(p_n) \downarrow \inf E$ as $n\to\infty$.
	The coercivity of $E$ implies that the sequence $(p_n)$ is bounded in $\mathring{H}^1$,
	and so we deduce from Kakutani's theorem that there is a subsequence $(p_{n_k})$ which is $\mathring{H}^1$-weakly convergent to some $p_\infty \in \mathring{H^1}$.
	Finally, since \fref{Proposition}{prop:prop_en} tells us that the energy is weakly lower semi-continuous in $\mathring{H}^1$, we deduce that
	$
		E(p_\infty) \leqslant \liminf_{k\to\infty} E(p_{n_k}) = \inf E,
	$
	i.e. $p_\infty$ is a minimizer of $E$, as desired.
	In particular the strict convexity of $E$ established in \fref{Proposition}{prop:prop_en} guarantees that this minimizer is unique.
	\fref{Lemma}{lemma:var_form_PV_and_M_inv} then ensures that this unique minimizer is precisely the unique weak solution of \eqref{eq:PV_M_inv_weak}.
\end{proof}

We now turn our attention to the regularity of solutions of $\PV$-and-$M$ inversion.
First we obtain that if $\PV$ and $M$ are sufficiently regular then the solutions lie in $H^2$.
This argument relies on the use of finite differences (see \fref{Definition}{def:fin_diff} and \fref{Lemma}{lemma:fin_diff}).

\begin{lemma}[$H^2$ regularity]
\label{lemma:H_2_reg}
	Let $M\in H^1$ and $\PV\in L^2$.
	Any $\mathring{H}^1$-weak solution $p$ of \eqref{eq:PV_M_inv_weak} belongs to $H^2$ and satisfies
	$
		\norm{p}{\dot{H}^2} \leqslant \norm{M}{\dot{H}^1} + 2 \norm{\PV}{L^2}.
	$
\end{lemma}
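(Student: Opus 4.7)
The plan is to upgrade $H^1$ regularity to $H^2$ using the classical Nirenberg difference-quotient method. For each $i \in \{1, 2, 3\}$ and sufficiently small $h \ne 0$, I would test the weak formulation \eqref{eq:PV_M_inv_weak} with $\phi_i^h \vcentcolon= -D_i^{-h}(D_i^h p)$, which lies in $\mathring{H}^1$ because finite differences on the torus preserve both $H^1$ regularity and the mean-zero condition.

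Using the discrete integration-by-parts identity $\int f\, D_i^h g = -\int (D_i^{-h} f)\, g$ together with the commutation of difference operators and partial derivatives, the diffusion term collapses to $\|D_i^h \nabla p\|_{L^2}^2$. For the nonlinear term, the 1-Lipschitz property of $s \mapsto \min(s, 0)$ yields the pointwise bound $|D_i^h \min(M - \partial_3 p, 0)| \leq |D_i^h M| + |D_i^h \partial_3 p|$; after Cauchy-Schwarz this produces a linear-in-$\|D_i^h \nabla p\|_{L^2}$ term controlled by $\|\partial_i M\|_{L^2}$, together with a quadratic term that is at most $\tfrac{1}{2}\|D_i^h \nabla p\|_{L^2}^2$ and can therefore be absorbed into the left-hand side. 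The source term is handled by Cauchy-Schwarz in $L^2$ combined with the standard estimate $\|D_i^{-h}(D_i^h p)\|_{L^2} \leq \|\partial_i(D_i^h p)\|_{L^2} \leq \|D_i^h \nabla p\|_{L^2}$, a direct consequence of \fref{Lemma}{lemma:fin_diff}.

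Dividing through by $\|D_i^h \nabla p\|_{L^2}$ yields an $h$-uniform bound on $\|D_i^h \nabla p\|_{L^2}$ by $\|\partial_i M\|_{L^2} + 2\|\PV\|_{L^2}$; a second invocation of \fref{Lemma}{lemma:fin_diff} then promotes this to $\partial_i \nabla p \in L^2$ with the same bound, and combining these component-wise estimates produces the claimed $H^2$ control of $p$.

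The main obstacle is the absorption step, which succeeds precisely because the nonlinearity enters the PDE with the coefficient $\tfrac{1}{2}$. This is the quantitative incarnation of the uniform ellipticity of the matrix $A = I - \tfrac{1}{2} H_u e_3 \otimes e_3$ flagged in \fref{Section}{sec:discuss}: had the coefficient been $\geq 1$ rather than $\tfrac{1}{2}$, the $H^2$-quadratic coming from the 1-Lipschitz estimate would saturate or overwhelm the Dirichlet form and the argument would break down. The only other subtlety — ensuring that $\phi_i^h$ has zero mean so as to lie in $\mathring{H}^1$ — is automatic on $\mathbb{T}^3$.
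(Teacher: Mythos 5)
Your proof is correct and is essentially the paper's proof: both test \eqref{eq:PV_M_inv_weak} with a second finite difference of $p$, exploit the $1$-Lipschitz nature of $s\mapsto\min(s,0)$ to control the nonlinear term, and close by absorbing the resulting $\tfrac12\|D_i^h\nabla p\|_{L^2}^2$ into the Dirichlet term. The only cosmetic difference is that the paper packages the Lipschitz bound through an explicit chain-rule factor $\zeta_h^i=\int_0^1 \mathds{1}\bigl((I+\theta\Delta_h^i)(M-\partial_3 p)<0\bigr)\,d\theta\in[0,1]$ so the discrete ellipticity of $I-\tfrac12\zeta_h^i\,e_3\otimes e_3$ is visible term-by-term, whereas you apply the triangle inequality pointwise and then absorb — mathematically the same estimate.
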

\begin{proof}
	We define, for $g_0(s) \vcentcolon= \min(s,\,0)$, and for any $x\in\mathbb{T}^3$, the function
	$
		\zeta_h^i (x) \vcentcolon= \int_0^1 g_0' \left( (I + \theta\Delta_h^i)(M-\partial_3 p) (x) \right) d\theta
	$
	where recall that the finite difference operator $\Delta_h^i$ is introduced in \fref{Definition}{def:fin_diff},
	and note that this function is well-defined almost everywhere independently of the representative of $M-\partial_3 p \in L^2$ used since
	for any fixed $h$ and $i$, defining $\zeta_h^i$ only requires the evaluation of $M-\partial_3 p$ and $(M - \partial_3 p)(\,\cdot\, + he_i)$.
	Note that $\zeta$ is defined to be precisely the term that will appear, by virtue of the chain rule for finite differences recorded in \fref{Lemma}{lemma:fin_diff},
	when applying a finite difference to the function
	$
		\min(M~-~\partial_3 p,\,0)~=~g_0 (M~-~\partial_3 p).
	$
	Note also that, since the Lipschitz constant of $g_0$ is equal to one, $ \lvert \zeta_h^i (x) \rvert \leqslant 1$ for every $x\in\mathbb{T}^3$.

	Now we use $\Delta_{-h}^i \Delta_h^i p$ as a test function (i.e. ${(\Delta_h^i )}^* \Delta_h^i$, as indicated by the computation of the adjoint
	of the finite difference operator in \fref{Lemma}{lemma:fin_diff}).
	This produces, using \fref{Lemma}{lemma:fin_diff},
	\begin{equation*}
		\int (\PV) \Delta_{-h}^i \Delta_h^i p
		= \int {\lvert \nabla\Delta_h^i p \rvert}^2 - \frac{1}{2} {(\partial_3 \Delta_h^i p)}^2\zeta_h^i + \frac{1}{2} (\Delta_h^i M) (\partial_3 \Delta_h^i p) \zeta_h^i.
	\end{equation*}
	Therefore, since $ \lvert \zeta_h^i \rvert \leqslant 1$,
	\begin{align*}
		\frac{1}{2} \int {\lvert \nabla\Delta_h^i p \rvert}^2
		&\leqslant \int {\lvert \nabla\Delta_h^i p \rvert}^2 - \frac{1}{2} {(\partial_3 \Delta_h^i p)}^2 \zeta_h^i
	\\
		&= \int- \frac{1}{2} (\Delta_h^i M)(\partial_3 \Delta_h^i p) \zeta_h^i + \int (\Delta_h^i \PV)(\Delta_h^i p)
	\\
		&\leqslant \left(
			\frac{1}{2} \norm{\Delta_h^i M}{L^2} + \norm{\Delta_h^i \PV}{H^{-1}} 
		\right) \norm{\nabla\Delta_h^i p}{L^2} 
	\end{align*}
	such that, using \fref{Lemma}{lemma:fin_diff} once more,
	\begin{equation*}
		\norm{\Delta_h^i \nabla p}{L^2}
		\leqslant \norm{\Delta_h^i M}{L^2} + 2 \norm{\Delta_h^i \PV}{H^{-1}} 
		\leqslant \lvert h \rvert \left( \norm{\nabla M}{L^2} + 2 \norm{\PV}{L^2} \right).
	\end{equation*}
	So finally we use \fref{Lemma}{lemma:fin_diff} one last time to conclude that $\nabla^2 p \in L^2$, with the estimate
	$
		\norm{p}{\dot{H}^2} = \norm{\nabla^2 p}{L^2} \leqslant \norm{M}{\dot{H}^1} + 2 \norm{\PV}{L^2}.
	$
\end{proof}

The next step in climbing the regularity ladder is to prove that the gradient of solutions to $\PV$-and-$M$ inversion is H\"{o}lder continuous.
Following de Giorgi, we do this by considering the equation satisfied by the gradient of such solutions.
That equation is recorded in the result below.

\begin{lemma}[Equation satisfied by the gradient]
\label{lemma:eqtn_satisfied_by_gradient}
	Let $M\in H^1$ and $\PV\in L^2$.
	If $p\in\mathring{H}^1$ is an $\mathring{H}^1$-weak solution of \eqref{eq:PV_M_inv_strong} then $u_i \vcentcolon= \partial_i p$ is an $\mathring{H}^1$-weak solution of
	\begin{equation}
	\label{eq:eqtn_satisfied_grad}
		-\Delta_h u_i
		- \frac{1}{2} \partial_3 \Big( [1 + \mathds{1} (M \geqslant \partial_3 p)] (\partial_3 u_i)\Big)
		= - \partial_i \PV + \frac{1}{2} \partial_3 \Big( \mathds{1} (M < \partial_3 p) (\partial_i M) \Big)
	\end{equation}
	in the sense that, for every $\phi\in\mathring{H}^1$,
	\begin{align*}
		\int_{\mathbb{T}^3} \nabla_h u_i \cdot \nabla_h \phi
		+ \frac{1}{2} [ 1 + \mathds{1} (M \geqslant \partial_3 p) ] (\partial_3 u_i) (\partial_3 \phi)
	\\
		= - \langle \partial_i \PV,\, \phi \rangle
		- \int_{\mathbb{T}^3} \frac{1}{2} \mathds{1} (M<\partial_3 p)(\partial_i M) (\partial_3 \phi).
	\end{align*}
\end{lemma}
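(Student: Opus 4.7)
The plan is to derive the claimed weak equation for $u_i = \partial_i p$ by differentiating the weak formulation \eqref{eq:PV_M_inv_weak} for $p$ with respect to $x_i$, which at the level of test functions amounts to substituting $-\partial_i \phi$ in place of $\phi$ and integrating by parts. By \fref{Lemma}{lemma:H_2_reg}, the hypotheses $M \in H^1$ and $\PV \in L^2$ guarantee $p \in H^2$, so in particular $u_i \in \mathring{H}^1$ and $\partial_3 u_i \in L^2$; this regularity is what will make the subsequent manipulations rigorous.

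First I would fix a smooth $\phi \in C^\infty(\mathbb{T}^3) \cap \mathring{H}^1$ and observe that $-\partial_i \phi$ is itself smooth and has zero mean on the torus, hence is an admissible test function in \eqref{eq:PV_M_inv_weak}. Using $-\partial_i \phi$ there and integrating by parts in $x_i$ (no boundary contributions arise on $\mathbb{T}^3$), the Dirichlet term $\int \nabla p \cdot \nabla(-\partial_i \phi)$ becomes $\int \nabla u_i \cdot \nabla \phi$, the linear functional on the right becomes $-\langle \partial_i \PV,\, \phi \rangle$, and the nonlinear term contributes
\begin{equation*}
\frac{1}{2} \int_{\mathbb{T}^3} \partial_i \bigl[ \min(M - \partial_3 p,\, 0) \bigr] \, \partial_3 \phi.
\end{equation*}

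The key step, and the main obstacle, is to justify the chain-rule identity
\begin{equation*}
\partial_i \bigl[ \min(M - \partial_3 p,\, 0) \bigr] = \mathds{1}(M < \partial_3 p) \bigl( \partial_i M - \partial_3 u_i \bigr)
\end{equation*}
in the distributional sense. Since $M - \partial_3 p \in H^1$ and $s \mapsto \min(s,0)$ is Lipschitz, this is the standard chain rule for post-composition of Sobolev functions with Lipschitz maps (Stampacchia's theorem), the derivative being interpreted to vanish on the level set $\{M = \partial_3 p\}$. If one prefers to avoid citing Stampacchia, an elementary route is to replace $\min(\cdot,0)$ by the mollification $g_\varepsilon \vcentcolon= \min(\cdot,0) * \varphi_\varepsilon$ already used in \fref{Lemma}{lemma:strg_cnvx_e}, apply the classical chain rule for $C^1$ compositions, and pass to the limit $\varepsilon \to 0$ by dominated convergence using the uniform Lipschitz bound $|g_\varepsilon'| \leqslant 1$ together with the pointwise a.e.\ convergence $g_\varepsilon'(M - \partial_3 p) \to \mathds{1}(M < \partial_3 p)$ on the complement of $\{M = \partial_3 p\}$.

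To conclude I would extend the identity from smooth $\phi$ to arbitrary $\phi \in \mathring{H}^1$ by density: each bilinear term is continuous on $\mathring{H}^1 \times \mathring{H}^1$ since $\mathds{1}(M < \partial_3 p)$ is bounded, $\partial_i M \in L^2$, and $\partial_3 u_i \in L^2$, while $\partial_i \PV \in H^{-1}$ pairs continuously with $\phi$. A final algebraic rearrangement, splitting $\nabla u_i \cdot \nabla \phi = \nabla_h u_i \cdot \nabla_h \phi + (\partial_3 u_i)(\partial_3 \phi)$ and using the identity $1 - \tfrac{1}{2} \mathds{1}(M < \partial_3 p) = \tfrac{1}{2} [ 1 + \mathds{1}(M \geqslant \partial_3 p) ]$, converts the expression into exactly the form claimed in \eqref{eq:eqtn_satisfied_grad}, with the $\partial_i M$ term moved to the right-hand side.
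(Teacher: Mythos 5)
Your proposal is correct and follows essentially the same route as the paper: both use the $H^2$ regularity from \fref{Lemma}{lemma:H_2_reg}, test the weak formulation \eqref{eq:PV_M_inv_weak} against $\pm\partial_i$ of a smooth mean-zero test function, apply the Lipschitz--Sobolev chain rule to $\min(M-\partial_3 p, 0)$ (the paper cites Leoni's Theorem 12.69, while you invoke Stampacchia or give the mollification argument, which are equivalent), and then conclude by density in $\mathring{H}^1$.
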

\begin{proof}
	As usual we write $g_0(s) \vcentcolon= \min(0,\,s)$.
	Since $M\in H^1$ and $ \PV \in L^2$ \fref{Lemma}{lemma:H_2_reg} tells us that $p\in H^2$, and so $M - \partial_3 p \in H^1$.
	Since $g_0$ is Lipschitz and $g_0(0) = 0$ we deduce that $g_0 (M-\partial_3 p)$ belongs to $H^1$ (see Theorem 12.69 of \cite{leoni}),
	with
	$
		\partial_i \left( g_0 ( M -\partial_3 p) \right)
		= g_0' (M - \partial_3 p) ( \partial_i M - \partial_3 \partial_i p)
		= \mathds{1} ( M < \partial_3 p) ( \partial_i M - \partial_3 \partial_i p).
	$
	In particular, for any $\psi\in \mathring{C}^\infty$ (the space of smooth functions with average zero) we may use $ \partial_i \psi$ as a test function to see that,
	using the weak formulation recorded in \fref{Lemma}{lemma:var_form_PV_and_M_inv} and integrating by parts,
	\begin{align*}
		- \langle \partial_i \PV,\, \psi \rangle
		= { \left( \PV,\, \partial_i \psi \right) }_{L^2} 
		= \int_{\mathbb{T}^3} \nabla \partial_i p \cdot \nabla \psi + \frac{1}{2} g_0' (M-\partial_3 p) ( \partial_i M - \partial_3 \partial_i p) ( \partial_3 \psi)
	\\
		= \int_{\mathbb{T}^3} \nabla_h u_i \cdot \nabla_h \psi
			+ \left( 1 - \frac{1}{2} \mathds{1} (M < \partial_3 p) \right) ( \partial_3 u_i ) ( \partial_3 \psi)
			+ \frac{1}{2} \mathds{1} (M < \partial_3 p) ( \partial_i M ) ( \partial_3 \psi)
	\end{align*}
	and so indeed
	\begin{align}
		&\int_{\mathbb{T}^3} \nabla_h u_i \cdot \nabla_h \psi
		+ \frac{1}{2} \left( 1 + \mathds{1} (M \geqslant \partial_3 p) \right) ( \partial_3 u_i ) ( \partial_3 \psi)
	\nonumber\\
		&= \int_{\mathbb{T}^3} - \frac{1}{2} \mathds{1} (M < \partial_3 p) ( \partial_i M ) ( \partial_3 \psi)
		- \langle \partial_i \PV,\, \psi \rangle.
	\label{eq:eqtn_satisfied_by_gradient_int}
	\end{align}
	In particular, since $u_i = \partial_i p \in H^1$, $ \partial_i M \in L^2$, and $ \partial_i \PV \in H^{-1}$ we may for any $\phi\in\mathring{H}^1$
	find an approximating sequence ${(\psi_n)}_n \subseteq \mathring{C}^\infty$ converging to $\psi$ in $\mathring{H}^1$ such that \eqref{eq:eqtn_satisfied_by_gradient_int}
	holds for $\psi = \psi_n$ and hence, passing to the limit, \eqref{eq:eqtn_satisfied_by_gradient_int} also holds for $\psi = \phi$, as desired.
\end{proof}

We are now equipped to prove the first main result of this section, which states that the gradient of solutions of $\PV$-and-$M$ inversion is H\"{o}lder continuous
provided that $\PV$ and $M$ are sufficiently regular.
The proof of this result makes uses of classical ideas in the sense that it relies on framing the de Giorgi argument in a manner compatible with the Campanato formulation of H\"{o}lder continuity
(see \fref{Definition}{def:Campanato} and \fref{Lemma}{lemma:Campanato_implies_Holder} ).
We include the proof here for the following reason:
a plethora of expository work (of which we only mention \cite{cafarelli_vasseur, vasseur}) discusses the \emph{homogeneous} de Giorgi method
in the elliptic case where there is no forcing term
and some textbooks such \cite{han_lin} discuss the \emph{inhomogeneous} case by analogy with the classical treatment of elliptic equations with forcing,
but an explicit proof, both sufficiently detailed and simple for the non-specialist, is difficult to find. The intent is that the proof provided here fits that bill.
(Note that auxiliary results used in the proof are relegated to \fref{Appendix}{sec:tools_reg_th} -- we emphasize the assembly of these ideas here.)

\begin{theorem}[H\"{o}lder continuity of the gradient]
\label{thm:Holder_cty_grad}
	Suppose that $M\in H^1$ satisfies $\nabla M\in L^q ( \left\{ M < \partial_3 p  \right\} )$ and that $\PV\in L^q$ for some $q > d = 3$.
	There exists $\alpha = \alpha(q) \in \left( 0,\, 1 - d/q \right]$ such that if $p$ is an $\mathring{H}^1$-weak solution of \eqref{eq:PV_M_inv_strong}
	then $\nabla p \in C^{0,\,\alpha}$ and the following estimate holds:
	\begin{equation}
	\label{eq:Holder_cty_grad_est}
		\norm{\nabla p}{C^{0,\,\alpha} (\mathbb{T}^3)} \leqslant C \left(
			\norm{\nabla M}{L^q ( M < \partial_3 p )} + \norm{\PV}{L^q (\mathbb{T}^3 )} 
		\right)
	\end{equation}
	where $C = C(q) > 0$.
\end{theorem}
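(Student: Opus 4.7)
The plan is to recognize \eqref{eq:eqtn_satisfied_grad} from \fref{Lemma}{lemma:eqtn_satisfied_by_gradient} as a linear, uniformly elliptic divergence-form equation for $u_i \vcentcolon= \partial_i p$ with bounded measurable coefficients and divergence-form forcing. Setting $A(x) \vcentcolon= \diag \bigl(1,\, 1,\, \tfrac{1}{2}[1 + \mathds{1}(M \geqslant \partial_3 p)]\bigr)$ and $g^{(i)} \vcentcolon= -\PV\, e_i + \tfrac{1}{2}\mathds{1}(M < \partial_3 p)(\partial_i M)\, e_3$, \fref{Lemma}{lemma:eqtn_satisfied_by_gradient} reads $-\mathrm{div}(A \nabla u_i) = \mathrm{div}(g^{(i)})$ weakly on $\mathbb{T}^3$. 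The eigenvalues of $A$ lie pointwise in $[\tfrac{1}{2},\, 1]$ regardless of the location of the free boundary $\{M = \partial_3 p\}$, so $A$ is genuinely uniformly elliptic, and the assumptions on $\nabla M$ and $\PV$ give $g^{(i)} \in L^q(\mathbb{T}^3)$ with $q > d = 3$.

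\textbf{De Giorgi iteration.} H\"{o}lder regularity of $u_i$ then follows from two de Giorgi truncation arguments adapted to this divergence-form setting, combined with the Campanato characterization of H\"{o}lder continuity (\fref{Definition}{def:Campanato} and \fref{Lemma}{lemma:Campanato_implies_Holder}). First, I would establish a local $L^\infty$ bound of the form $\norm{u_i}{L^\infty(B_{r/2})} \leqslant C \bigl( r^{-d/2}\norm{u_i}{L^2(B_r)} + r^{1-d/q}\norm{g^{(i)}}{L^q(B_r)} \bigr)$ by testing \eqref{eq:eqtn_satisfied_grad} against $(u_i - k)_+ \eta^2$ for a cutoff $\eta$, extracting a Caccioppoli inequality at each level $k$ (with the forcing absorbed via H\"{o}lder and Sobolev embedding), and iterating over geometrically shrinking radii and increasing levels until the $L^2$ mass at high levels vanishes. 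Second, I would establish an oscillation decay $\mathrm{osc}_{B_{r/2}} u_i \leqslant \gamma\, \mathrm{osc}_{B_r} u_i + C r^{1-d/q} \norm{g^{(i)}}{L^q(B_r)}$ for some $\gamma \in (0,\,1)$, by centering $u_i$ at an appropriately chosen intermediate level and invoking a de Giorgi isoperimetric-type lemma which prevents $u_i$ from concentrating near both its supremum and its infimum on the same ball.

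\textbf{Iteration and Campanato bound.} Iterating the oscillation decay on dyadic balls yields $\mathrm{osc}_{B_r(x_0)} u_i \leqslant C (r/r_0)^\alpha \mathrm{osc}_{B_{r_0}} u_i + C r^\alpha \norm{g^{(i)}}{L^q(\mathbb{T}^3)}$ for some $\alpha = \alpha(q) \in (0,\, 1 - d/q]$, and hence a Campanato-type estimate $r^{-(d+2\alpha)} \int_{B_r(x_0)} \lvert u_i - (u_i)_{B_r(x_0)} \rvert^2 \leqslant C \bigl( \norm{u_i}{L^2(\mathbb{T}^3)}^2 + \norm{g^{(i)}}{L^q(\mathbb{T}^3)}^2 \bigr)$, uniformly in $x_0 \in \mathbb{T}^3$ and $r$ small. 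By \fref{Lemma}{lemma:Campanato_implies_Holder} this yields $u_i \in C^{0,\alpha}$ with the corresponding estimate, and $\norm{\nabla p}{L^2} \leqslant C \bigl( \norm{M}{L^2} + \norm{\PV}{H^{-1}} \bigr) \leqslant C \bigl( \norm{M}{L^q} + \norm{\PV}{L^q} \bigr)$ follows from the coercivity of the energy in \fref{Proposition}{prop:prop_en} applied to the minimizer provided by \fref{Theorem}{thm:exist_and_unique}, giving \eqref{eq:Holder_cty_grad_est}. The main technical obstacle is managing the inhomogeneous correction $r^{1-d/q}$ in the iteration of the oscillation decay; the standard remedy is to iterate a hybrid quantity such as $\mathrm{osc}_{B_r} u_i + \kappa r^\alpha \norm{g^{(i)}}{L^q}$ with $\alpha < 1 - d/q$ chosen small enough that geometric contraction survives, and then invoke Campanato to convert decay into H\"{o}lder continuity.
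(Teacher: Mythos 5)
Your route is genuinely different from the paper's. The paper does not run the full de Giorgi iteration ($L^\infty$ bound plus oscillation-decay lemma) on the inhomogeneous equation directly. Instead, it uses a perturbation argument: on each ball $B(x_0,r)$ it splits $u_i = v + w$, where $v \in H^1_0(B)$ solves $-\nabla\cdot(A\nabla v)=\nabla\cdot F$ with zero Dirichlet boundary data and $w$ solves the homogeneous equation $-\nabla\cdot(A\nabla w)=0$; it then invokes a Campanato-form restatement of de Giorgi's theorem for $w$ (\fref{Proposition}{prop:Campanato_formulation_hom_DG}), a simple energy estimate for $v$ (\fref{Proposition}{prop:Campanato_type_a_priori_estimate_div}), and the standard iteration lemma (\fref{Lemma}{lemma:technical_lemma_inhom_DG}) to obtain a Campanato condition on $\int_{B_\rho}|\nabla u_i|^2$, closing via \fref{Proposition}{prop:suff_Campanato_cond_gradients}. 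Your plan — truncation iteration for $L^\infty$ boundedness, then an oscillation-decay lemma absorbing the $r^{1-d/q}$ forcing term into a hybrid quantity — is the classical de Giorgi--Nash route for inhomogeneous equations. Both are standard; the perturbation route has the advantage that it only needs the de Giorgi theorem as a black box for \emph{homogeneous} equations, which is what the cited expository references \cite{cafarelli_vasseur,vasseur} actually prove, whereas your route requires the oscillation-decay machinery to be carried out in the inhomogeneous setting from scratch.

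There is, however, a concrete error at the end of your argument. To convert the Campanato/oscillation estimate into \eqref{eq:Holder_cty_grad_est} you must bound $\norm{u_i}{L^2}$ by $\norm{\nabla M}{L^q(M<\partial_3 p)} + \norm{\PV}{L^q}$, and you propose to do this via the coercivity of $E$ from \fref{Proposition}{prop:prop_en}, which gives $\norm{\nabla p}{L^2}\leqslant C\bigl(\norm{M}{L^2}+\norm{\PV}{H^{-1}}\bigr)$. This does not imply the claimed estimate: $\norm{M}{L^2}$ is not controlled by $\norm{\nabla M}{L^q}$ (take $M$ a nonzero constant and $\PV\equiv 0$: the right-hand side of \eqref{eq:Holder_cty_grad_est} is then zero, while your bound yields a strictly positive quantity). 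The correct closing step, as in the paper, is to test the equation $-\nabla\cdot(A\nabla u_i)=\nabla\cdot g^{(i)}$ against $u_i$ itself (\fref{Lemma}{lemma:std_H_1_est_unif_ell_op}), giving $\norm{\nabla u_i}{L^2}\leqslant \tfrac{2}{\theta}\norm{g^{(i)}}{L^2}\leqslant C\norm{g^{(i)}}{L^q}$, and then use Poincar\'{e} on the torus (valid since $u_i=\partial_i p$ has average zero) to deduce $\norm{u_i}{L^2}\leqslant C\norm{g^{(i)}}{L^q}$, which has the correct right-hand side.
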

\begin{proof}
	Fix $i = 1,\,2,\,3$ and write $u \vcentcolon= \partial_i p$.
	Note that, as derived in \fref{Lemma}{lemma:eqtn_satisfied_by_gradient}, $u$ is an $\mathring{H}^1$-weak solution of \eqref{eq:eqtn_satisfied_grad}
	which may be written as
	\begin{equation}
	\label{eq:Holder_cty_grad_eq}
		-\nabla\cdot ( A(x)\nabla u ) = \nabla\cdot F
	\end{equation}
	for
	$
		A \vcentcolon= \widetilde{I} + \frac{1}{2} \left( 1 + \mathds{1} (\partial_3 p < M) \right) e_3\otimes e_3
	$ and $
		F \vcentcolon= \PV e_i + \frac{1}{2} \mathds{1} ( M < \partial_3 p ) ( \partial_i M ) e_3,
	$
	where $\widetilde{I} = I - e_3\otimes e_3$.
	Crucially: $A\in L^\infty$ is uniformly elliptic since
	$
		A(x)\xi\cdot\xi \geqslant \frac{1}{2} {\lvert \xi \rvert}^2
	$
	for all $x\in\mathbb{R}^3$.

	We will now use \eqref{eq:Holder_cty_grad_eq} to prove that, for any $x_0\in\mathbb{T}^3$ and $r>0$,
	\begin{equation}
	\label{eq:Holder_cty_grad_Campanato}
		\int_{B(x_0,\,r)} {\lvert \nabla u \rvert}^2 \leqslant C \norm{F}{L^q}^2 r^{d-2+2\alpha}
	\end{equation}
	for some $\alpha = \alpha(q) \in (0,\,1)$ and $C = C(q) > 0$ (recall that $d=3$).
	It will then follow from \fref{Proposition}{prop:suff_Campanato_cond_gradients}
	that $u\in C^{0,\,\alpha}$, with the desired estimate \eqref{eq:Holder_cty_grad_est}.

	So let us fix $x_0\in\mathbb{T}^3$ and $r>0$.
	To establish \eqref{eq:Holder_cty_grad_Campanato} we split the solution $u$, inside the ball $B = B(x_0,\,r)$, into two parts.
	One part, $v$, accounts for the forcing term $\nabla\cdot F$ while the other part, $w$, solves a homogeneous problem (i.e. without forcing)
	in order to correct for the homogeneous Dirichlet boundary conditions imposed on $v$.
	More precisely we define $v$ to be the $\mathring{H}^1$-weak solution of
	\begin{equation*}
		\left\{
		\begin{aligned}
			&-\nabla\cdot ( A(x) \nabla v ) = \nabla\cdot F 	&\text{ in } B \text{ and }\\
			&v=0							&\text{ on } \partial B,
		\end{aligned}
		\right.
	\end{equation*}
	i.e. $v\in H_0^1 (B)$, and define $w \vcentcolon= u - v$, such that indeed $u = v + w$, which means that $w$ is an $H^1$-weak solution of
	$
		-\nabla\cdot (A(x) \nabla w) = 0
	$
	in $B$
	(but we know nothing, and need to know nothing, about the boundary values of $w$).
	Combining \fref{Propositions}{prop:Campanato_formulation_hom_DG} and \ref{prop:Campanato_type_a_priori_estimate_div}
	we deduce that there exist $\alpha_1$ and $\alpha_2 = 1-\frac{d}{q}$ both in $(0,\,1)$,
	such that, for any $0 < \rho \leqslant r$,
	\begin{align*}
		\int_{B(x_0,\,\rho)} {\lvert \nabla u \rvert}^2 
		&\leqslant 2 \left(
			\int_{B(x_0,\,\rho)} {\lvert \nabla w \rvert}^2 
			+ \int_{B(x_0,\,\rho)} {\lvert \nabla v \rvert}^2 
		\right)
	\\
		&\leqslant C \left(
			{ \left( \frac{\rho}{r} \right) }^{d-2+2\alpha_1} \int_{B(x_0,\,r)} {\lvert \nabla w \rvert}^2 
			+ \int_{B(x_0,\,r)} {\lvert \nabla v \rvert}^2 
		\right)
	\\
		&\leqslant C \left(
			{ \left( \frac{\rho}{r} \right) }^{d-2+2\alpha_1} \int_{B(x_0,\,r)} {\lvert \nabla u \rvert}^2 
			+ r^{d-2+2\alpha_2} \norm{F}{L^q}^2
		\right).
	\end{align*}
	We are now in a position to make use of \fref{Lemma}{lemma:technical_lemma_inhom_DG} with
	$\phi(\rho) = \int_{B(x_0,\,\rho)} {\lvert \nabla u \rvert}^2$,
	$B = C \norm{F}{L^q}^2$,
	$\beta = d-2+2\min(\alpha_1,\,\alpha_2)$, and 
	$\gamma = d-2+2\alpha_1$.
	We deduce that there exists $\delta \in (\beta,\,\gamma)$ such that, for $\alpha = \min(\alpha_1,\,\alpha_2)$, and since $0 < \rho \leqslant R_0 < 1$,
	\begin{align}
		\int_{B(x_0,\,\rho)} {\lvert \nabla u \rvert}^2 
		\leqslant C \left[
			{ \left( \frac{\rho}{R_0/2} \right) }^\delta \phi \left( \frac{R_0}{2} \right) + B \rho^\beta
		\right]
		\leqslant C \left[ \phi \left( \frac{R_0}{2} \right) + B \right] \rho^\beta
	\nonumber\\
		\leqslant C \left( \int_{\mathbb{T}^3} {\lvert \nabla u \rvert}^2 + \norm{F}{L^q}^2 \right) \rho^{d-2+2\alpha}.
	\label{eq:Holder_cty_grad_Campanato_int}
	\end{align}
	In particular note that $\alpha \leqslant \alpha_2 = 1 - \frac{d}{q}$.
	To conclude it suffices to use \fref{Lemma}{lemma:std_H_1_est_unif_ell_op} and H\"{o}lder's inequality, such that
	$
		\int_{\mathbb{T}^3} {\lvert \nabla u \rvert}^2 \leqslant C (q) \norm{F}{L^q}^2,
	$
	and hence obtain \eqref{eq:Holder_cty_grad_Campanato} from \eqref{eq:Holder_cty_grad_Campanato_int}.
	\fref{Proposition}{prop:suff_Campanato_cond_gradients} then tells us that
	$
		\norm{u}{C^{0,\,\alpha}} \leqslant C(q) ( \norm{F}{L^q} + \norm{u}{L^2} )
	$
	and so we may use \fref{Lemma}{lemma:std_H_1_est_unif_ell_op} once again to finally obtain \eqref{eq:Holder_cty_grad_est}.
\end{proof}

As is often the case in elliptic regularity theory, having established that the gradient of solutions is H\"{o}lder continuous we immediately deduce useful consequences
and higher regularity.
First, a useful consequence: if $\PV$ and $M$ are sufficiently regular then both the unsaturated phase $ \left\{ M < \partial_3 p \right\}$ and the saturated phase $ \left\{ \partial_3 p < M \right\}$
are open sets.

\begin{cor}[Each phase is an open set]
\label{cor:phases_are_open}
	Let $M\in H^1$ be continuous with $\nabla M \in L^q ( M < \partial_3 p )$ and let $\PV\in L^q$ for some $q > d = 3$.
	If $p$ is an $\mathring{H}^1$-weak solution of \eqref{eq:PV_M_inv_strong} then $\partial_3 p - M$ is continuous and so both the
	unsaturated phase $ \left\{ M < \partial_3 p \right\}$ and the saturated phase $ \left\{ \partial_3 p < M \right\}$ are open sets.
\end{cor}
\begin{proof}
	This follows immediately from \fref{Theorem}{thm:Holder_cty_grad} which tells us that, under these assumptions on $M$ and $\PV$,
	$\nabla p$ is H\"{o}lder continuous.
\end{proof}

We now continue deducing further results from the H\"{o}lder continuity of the gradient obtained in \fref{Theorem}{thm:Holder_cty_grad} above.
Here we obtain higher-order regularity, namely sufficient conditions for the H\"{o}lder continuity of the \emph{Hessian} of solutions of $\PV$-and-$M$ inversion.

\begin{lemma}[H\"{o}lder continuity of the Hessian away from the interface]
\label{lemma:Holder_cty_Hessian}
	Let $M\in H^1$ be continuous with both $\nabla M$ and $\partial_3 \nabla M$ belonging to $L^q \left( M < \partial_3 p \right)$,
	let $\PV \in L^q$, and let $\nabla \PV \in L^q \left( M \neq \partial_3 p \right)$ for some $q > d = 3$.
	If $p$ is an $\mathring{H}^1$-weak solution of \eqref{eq:PV_M_inv_strong} then $\nabla^2 p \in C^{0,\,\alpha}_\text{loc} \left( M \neq \partial_3 p \right)$
	for $\alpha = 1 - \frac{d}{q}$.
\end{lemma}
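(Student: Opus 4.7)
The plan is to exploit the fact that, away from the interface, the free-boundary nonlinearity disappears and the equation satisfied by $\nabla p$ becomes linear with \emph{constant} coefficients; the matter then reduces to classical linear elliptic regularity.

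First, since $q > d = 3$, I would invoke \fref{Theorem}{thm:Holder_cty_grad} and \fref{Corollary}{cor:phases_are_open} to conclude that $\partial_3 p - M$ is continuous, so the saturated and unsaturated phases are open. Therefore it suffices to fix an arbitrary $x_0 \in \left\{ M \neq \partial_3 p \right\}$ and choose a small ball $B$ centered at $x_0$ lying entirely in a single phase, and to establish $C^{0,\,\alpha}$ regularity of $\nabla^2 p$ locally on $B$. On such a $B$, by virtue of \fref{Lemma}{lemma:eqtn_satisfied_by_gradient}, for each $i \in \{1,\,2,\,3\}$ the component $u_i \vcentcolon= \partial_i p$ is an $H^1$-weak solution of a constant-coefficient, uniformly elliptic linear equation $-a_{jk}\partial_j \partial_k u_i = f_i$, where in the saturated phase $f_i = -\partial_i \PV$ and in the unsaturated phase $f_i = -\partial_i \PV + \tfrac{1}{2}\partial_3 \partial_i M$. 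The hypotheses $\nabla \PV \in L^q(\{M \neq \partial_3 p\})$ and $\partial_3 \nabla M \in L^q(\{M < \partial_3 p\})$ are tailored precisely so that $f_i \in L^q(B)$ in either case.

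Having reduced the problem to a constant-coefficient linear elliptic equation with $L^q$ forcing, I would conclude via classical interior $W^{2,q}$ Calder\'on--Zygmund estimates that $u_i \in W^{2,q}_{\mathrm{loc}}(B)$, and then by Morrey's embedding $W^{2,q} \hookrightarrow C^{1,\alpha}_{\mathrm{loc}}$ (valid since $q > d$) that $\nabla u_i \in C^{0,\,\alpha}_{\mathrm{loc}}(B)$ with $\alpha = 1 - \tfrac{d}{q}$. Letting $i$ and $x_0$ vary then gives $\nabla^2 p \in C^{0,\,\alpha}_{\mathrm{loc}}(\{M \neq \partial_3 p\})$, as required.

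The only genuine subtlety is the localisation in the first step: the payoff of \fref{Theorem}{thm:Holder_cty_grad} here is precisely that it renders the free-boundary nature of the problem irrelevant on any ball compactly contained in a single phase, so that the indicator functions in \eqref{eq:eqtn_satisfied_grad} may be treated as constants. Once this is in hand, no further free-boundary analysis intervenes and the remaining steps are textbook linear regularity.
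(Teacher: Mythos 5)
Your proof is correct and follows essentially the same route as the paper: first invoke \fref{Corollary}{cor:phases_are_open} to localise to a ball inside a single phase, then observe that \eqref{eq:eqtn_satisfied_grad} reduces there to a constant-coefficient uniformly elliptic equation with $L^q$ right-hand side, and conclude by classical linear interior regularity. The only (inessential) difference is in the last step: you pass through Calder\'on--Zygmund $W^{2,q}_{\mathrm{loc}}$ estimates and Morrey's embedding, whereas the paper cites a Campanato-type $C^{1,\alpha}$ interior estimate directly (Theorem 3.13 of \cite{han_lin}); these are equivalent routes to the same conclusion.
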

\begin{proof}
	\fref{Lemma}{lemma:eqtn_satisfied_by_gradient} tells us that $u = \partial_i p$ is an $\mathring{H}^1$-weak solution of \eqref{eq:eqtn_satisfied_grad}.
	Since \fref{Corollary}{cor:phases_are_open} tells us that $ \left\{ M < \partial_3 p \right\} $ is an open set we may test \eqref{eq:eqtn_satisfied_grad}
	against any $\varphi\in C_c^\infty (M < \partial_3 p)$ to obtain
	\begin{equation*}
		\int_{ \left\{ M < \partial_3 p \right\} } \nabla_h u \cdot\nabla_h \varphi + \frac{1}{2} ( \partial_3 u)(\partial_3 \varphi)
		= - \langle \partial_i \PV,\, \varphi \rangle - \int_{ \left\{ M < \partial_3 p \right\} } \frac{1}{2} ( \partial_i M ) ( \partial_3 \varphi )
	\end{equation*}
	such that, by passing to the limit from $C_c^\infty$ to $H_0^1$, we obtain that
	$u\in H^1$ is an $H_0^1$-weak solution of
	\begin{equation*}
		\left( - \Delta_h - \frac{1}{2} \partial_3^2 \right) u = - \partial_i \PV + \frac{1}{2} \partial_3 \partial_i M \text{ in } \left\{ M < \partial_3 p \right\}
	\end{equation*}
	(note that $u$ does \emph{not} belong to $H_0^1$, it is only the test functions that belong to $H_0^1$).
	By classical theory (the elliptic operator has \emph{constant coefficients} when restricted to a single phase, so we may use for example Theorem 3.13 of \cite{han_lin}) it follows that,
	since $ \frac{1}{2} \partial_3 \partial_i M - \partial_i \PV \in L^q \left( M < \partial_3 p \right)$,
	consequently $\nabla u\in C^{0,\,\alpha}_\text{loc} \left( M < \partial_3 p \right)$ for $\alpha = 1 - \frac{d}{q}$.
	Similarly we obtain that $u\in H^1$ is an $H_0^1$-weak solution of
	\begin{equation*}
		-\Delta u = - \partial_i \PV \text{ in } \left\{ \partial_3 p < M \right\}
	\end{equation*}
	such that, since $ \partial_i \PV \in L^q \left( \partial_3 p < M \right)$, 
	it follows that $\nabla u\in C^{0,\,\alpha}_\text{loc} \left( \partial_3 p < M \right)$
	as desired.
\end{proof}

We are now ready to prove the second main result of this section, namely identifying sufficient conditions on $\PV$ and $M$ which ensure that solutions of $\PV$-and-$M$ inversion
are classical in each phase.

\begin{theorem}[Classical solutions in each phase]
\label{thm:classical_sols_each_phase}
	Let $M\in H^1$ be continuous with both $\nabla M$ and $\nabla \partial_3 M$ belonging to $L^q \left( M < \partial_3 p \right)$,
	let $\PV \in L^q$, and also let $\nabla \PV \in L^q \left( M \neq \partial_3 p \right)$ for some $q > d = 3$.
	If $p$ is an $\mathring{H}^1$-weak solution of \eqref{eq:PV_M_inv_strong} then it is a classical solution of
	\begin{align}
	\label{eq:classical_sol_unsaturated}
		\left( - \Delta_h - \frac{1}{2} \partial_3^2 \right) p &= - \PV + \frac{1}{2} \partial_3 M \text{ in } \left\{ M < \partial_3 p \right\}
	\shortintertext{and}
	\label{eq:classical_sol_saturated}
		-\Delta p &= - \PV \text{ in } \left\{ \partial_3 p < M \right\}.
	\end{align}
\end{theorem}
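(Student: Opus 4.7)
The strategy is to combine the two preceding results---\fref{Corollary}{cor:phases_are_open} (which guarantees that each phase is an open subset of $\mathbb{T}^3$) and \fref{Lemma}{lemma:Holder_cty_Hessian} (which provides $\nabla^2 p \in C^{0,\alpha}_\text{loc}(M \neq \partial_3 p)$)---with a localization of the weak formulation \eqref{eq:PV_M_inv_weak}. Note that since $q > d = 3$, the Morrey embedding applied to the hypotheses $\nabla \PV \in L^q(M \neq \partial_3 p)$ and $\partial_3 M \in L^q(M < \partial_3 p)$ (with $M$ already continuous) ensures that $\PV$ is locally H\"{o}lder continuous in each open phase and that $\partial_3 M$ is locally H\"{o}lder continuous in the unsaturated phase. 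Consequently every term that will appear on either side of \eqref{eq:classical_sol_unsaturated}--\eqref{eq:classical_sol_saturated} will be (at least) continuous on the relevant open phase.

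Next, I would work one phase at a time. In the \emph{unsaturated phase} $U \vcentcolon= \{M < \partial_3 p\}$, the nonlinearity simplifies pointwise as $\min(M - \partial_3 p,\, 0) = M - \partial_3 p$. Plugging any test function $\phi \in C_c^\infty(U) \subseteq \mathring{H}^1$ into \eqref{eq:PV_M_inv_weak} yields
\begin{equation*}
	\int_U \nabla p \cdot \nabla \phi + \tfrac{1}{2}(M - \partial_3 p)\,\partial_3 \phi
	= -\langle \PV,\, \phi \rangle.
\end{equation*}
Since $p \in C^2(U)$ and $M, \PV$ are locally integrable on $U$, a straightforward integration by parts (legitimate because $\phi$ is compactly supported inside the open set $U$) moves all derivatives off $\phi$ and produces
\begin{equation*}
	\int_U \left[ -\Delta p + \tfrac{1}{2}\partial_3^2 p - \tfrac{1}{2}\partial_3 M + \PV \right] \phi \, dx = 0
	\quad\text{for every } \phi \in C_c^\infty(U).
\end{equation*}
By the fundamental lemma of the calculus of variations the bracketed expression vanishes almost everywhere in $U$; since each of its constituent terms is continuous on $U$, it vanishes \emph{everywhere} in $U$, which rearranges into \eqref{eq:classical_sol_unsaturated}.

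The \emph{saturated phase} $S \vcentcolon= \{\partial_3 p < M\}$ is handled identically, except that now $\min(M - \partial_3 p,\, 0) = 0$ pointwise on $S$, so the nonlinear contribution drops out entirely and the same localization/integration-by-parts/continuity argument delivers $-\Delta p = -\PV$ pointwise on $S$, i.e.\ \eqref{eq:classical_sol_saturated}. The theorem is therefore essentially a bookkeeping consequence of the two preceding results; the only delicate point is ensuring that the pointwise-a.e.\ equation can be promoted to a pointwise equation on the open phases, which is exactly where the Morrey embedding (to get continuity of $\PV$ and $\partial_3 M$) and \fref{Lemma}{lemma:Holder_cty_Hessian} (to get continuity of $\nabla^2 p$) combine.
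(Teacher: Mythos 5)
Your proposal is correct and follows essentially the same route as the paper: localize the weak formulation with test functions supported in each (open) phase, simplify the $\min$ term pointwise, and upgrade the resulting distributional equation to a classical one using \fref{Lemma}{lemma:Holder_cty_Hessian} for $p\in C^2$ in each phase and Morrey's embedding for the continuity of $\PV$ and $\partial_3 M$. The only cosmetic difference is that you integrate by parts and invoke the fundamental lemma of the calculus of variations directly, whereas the paper first passes from $C_c^\infty$ to $H_0^1$ test functions before concluding; both are standard and equivalent.
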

\begin{proof}
	\fref{Corollary}{cor:phases_are_open} tells us that $ \left\{ M < \partial_3 p \right\}$ is open so we may test \eqref{eq:PV_M_inv_strong}
	against $\varphi\in C_c^\infty( M < \partial_3 p)$, obtaining
	\begin{align}
		- \int_{ \left\{ M < \partial_3 p \right\} } (\PV) \varphi
		= \int_{ \left\{ M < \partial_3 p \right\} } \nabla_h p \cdot \nabla_h \varphi + \frac{1}{2} (\partial_3 p) ( \partial_3 \varphi) + \frac{1}{2} M ( \partial_3 \varphi).
	\label{eq:classical_sols_each_phase_int}
	\end{align}
	The usual approximation argument then shows that \eqref{eq:classical_sols_each_phase_int} actually holds for all $\varphi\in H_0^1 (M<\partial_3 p)$.
	Therefore, since \fref{Lemma}{lemma:Holder_cty_Hessian} tells us that $p\in C^2 (M < \partial_3 p)$ while Morrey's embedding tells us that, for $q > d$,
	$
		\PV,\, \partial_3 M \in C^{0,\,1-d/q}_\text{loc} ( M < \partial_3 p)
	$
	we conclude that indeed $p$ is a classical solution of \eqref{eq:classical_sol_unsaturated}.
	In the same way we deduce from \eqref{eq:PV_M_inv_strong} and $\PV$ having average zero that
	\begin{equation*}
		\int_{ \left\{ \partial_3 p < M \right\} } \nabla p \cdot \nabla\phi + (\PV)\phi = 0
		\text{ for all } \phi\in H_0^1 ( \partial_3 p < M),
	\end{equation*}
	from which it follows as above that $p$ is a classical solution of \eqref{eq:classical_sol_saturated}.
\end{proof}

We now record the penultimate main result of this section, which consists in a higher-regularity result as well as in establishing that, if both $\PV$ and $M$ are smooth,
then so is, in each phase, the solution of $\PV$-and-$M$ inversion.

\begin{theorem}[Higher regularity in each phase]
\label{thm:higher_reg}
	Let $q > d = 3$, let $k\geqslant 0$ be an integer, and let $\alpha\in (0,\,1)$. Suppose that
	\begin{itemize}
		\item	$M\in H^1$ with $\nabla M,\, \nabla\partial_3 M \in L^q ( M <\partial_3 p )$ and $\partial_3 M \in C^{k,\,\alpha} ( M <\partial_3 p)$ and that
		\item	$\PV\in L^q (\mathbb{T}^3)$ with $\nabla \PV \in L^q ( M \neq \partial_3 p ) $ and $ \PV \in C^{k,\,\alpha} (M \neq \partial_3 p)$.
	\end{itemize}
	If $p$ is an $\mathring{H}^1$-weak solution of \eqref{eq:PV_M_inv_strong} then $p\in C^{k,\,\alpha}_\text{loc} ( M \neq \partial_3 p)$.
	In particular if $\partial_3 M$ and $\PV$ are $C^\infty$ in $ \left\{ M < \partial_3 p \right\}$ and $ \left\{ M \neq \partial_3 p \right\}$, respectively,
	then $p$ is $C^\infty$ in $\left\{ M \neq \partial_3 p \right\}$.
\end{theorem}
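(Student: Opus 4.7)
The plan is to reduce the problem to classical interior Schauder regularity applied phase-by-phase, using the work already done in the preceding results.

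First, I would invoke \fref{Corollary}{cor:phases_are_open}, which (under the current hypotheses, since $q>d$) tells us that the unsaturated phase $\{M<\partial_3 p\}$ and the saturated phase $\{\partial_3 p<M\}$ are both open. Next, \fref{Theorem}{thm:classical_sols_each_phase} shows that $p$ is a \emph{classical} solution of the two constant-coefficient, uniformly elliptic PDEs
\begin{equation*}
	\left(-\Delta_h - \tfrac{1}{2}\partial_3^2\right) p = -\PV + \tfrac{1}{2}\partial_3 M \text{ in } \{M<\partial_3 p\}, \qquad -\Delta p = -\PV \text{ in } \{\partial_3 p < M\}.
\end{equation*}
Crucially, within each phase the differential operator has constant coefficients, so classical interior Schauder theory is directly available.

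The main step is then a single application (no induction really needed, although one may phrase it that way) of interior Schauder estimates on any ball compactly contained in a given phase. Under the standing hypotheses, the right-hand sides $-\PV+\tfrac{1}{2}\partial_3 M$ and $-\PV$ lie in $C^{k,\alpha}_{\mathrm{loc}}$ on their respective phases. Interior Schauder estimates applied on any ball $\overline{B}\subset \{M\neq\partial_3 p\}$ therefore yield $p\in C^{k+2,\alpha}(B)$, which in particular gives $p\in C^{k,\alpha}_{\mathrm{loc}}(\{M\neq\partial_3 p\})$ as claimed. For the unsaturated phase, where the operator is anisotropic, one either quotes the standard Schauder theorem for general uniformly elliptic constant-coefficient operators, or reduces to the isotropic case by the explicit rescaling $x_3\mapsto\sqrt{2}\,x_3$, which conjugates $-\Delta_h-\tfrac{1}{2}\partial_3^2$ to $-\Delta$.

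The smoothness statement follows immediately: if $\partial_3 M$ and $\PV$ are $C^\infty$ on their respective sets, then they are $C^{k,\alpha}$ on those sets for every $k\geqslant 0$ and every $\alpha\in(0,1)$, so by what was just proved $p\in C^{k,\alpha}_{\mathrm{loc}}(\{M\neq\partial_3 p\})$ for every $k$, hence $p\in C^\infty(\{M\neq\partial_3 p\})$. The main obstacle here is essentially non-existent: the heavy lifting (open phases, classical solvability, constant-coefficient structure in each phase) has all been discharged by \fref{Corollary}{cor:phases_are_open} and \fref{Theorem}{thm:classical_sols_each_phase}, so the final step is purely a bookkeeping invocation of classical interior elliptic regularity, with the only mild subtlety being the anisotropy in the unsaturated phase, handled by the rescaling noted above.
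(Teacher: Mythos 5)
Your proposal is correct and follows essentially the same route as the paper: invoke \fref{Theorem}{thm:classical_sols_each_phase} to get a classical solution of a constant-coefficient uniformly elliptic equation in each (open) phase, then apply interior Schauder estimates. The paper simply cites Theorem 6.17 of Gilbarg--Trudinger and leaves the anisotropy of the unsaturated-phase operator implicit; your explicit remark about handling it via the rescaling $x_3 \mapsto \sqrt{2}\,x_3$ (or a general constant-coefficient Schauder theorem) is a harmless elaboration of the same argument.
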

\begin{proof}
	Under these assumptions \fref{Theorem}{thm:classical_sols_each_phase} tells us that $p$ is a classical solution
	of \eqref{eq:classical_sol_unsaturated}--\eqref{eq:classical_sol_saturated}.
	The result then follows from classical Schauder interior estimates (see for example Theorem 6.17 of \cite{gilbarg_trudinger}).
\end{proof}

As we head towards the conclusion of this section we provide a recipe for the construction of explicit one-dimensional ($x_3$--dependent) solutions.
This construction is of particular interest since it can then be used to find examples where $\PV$ and $M$ are both smooth and yet the solution of $\PV$-and-$M$ inversion is not.
More precisely, that solution will be $C^{1,\,1}$ but not $C^2$, which shows that \fref{Theorem}{thm:Holder_cty_grad} above is nearly sharp.

Two remarks are in order.
First, note that below we specify that the period of $\PV$ and $M$ is $2\pi$.
This is done solely to fix notation and the same construction carries through for any period.
Second, note that we are actually only able to specify the \emph{profile} of $M$.
Indeed, the solution constructed satisfies $\PV$-and-$M$ inversion where the data is $\PV$ and $M-c$,
shifted by some constant $c$ which depends on $\PV$ and $M$.

\begin{lemma}[Explicit one-dimensional solutions]
\label{lemma:explicit_1d_sols}
	Suppose that $\PV = \PV(x_3)$ and $M = M(x_3)$ are smooth $2\pi$--periodic functions.
	Let $\phi_m (x) \vcentcolon= x + \frac{1}{2} \min_0 (m-x)$ where, as usual, $\min_0 (x) \vcentcolon= \min(x,\,0)$.
	$\phi_m$ is invertible for any $m\in\mathbb{R}$ and the following holds.
	Let $A$ be an antiderivative of $\PV$ such that $A' = \PV$, let $\widetilde{\Theta} \vcentcolon= \phi_M^{-1} \circ A$, and let $c \vcentcolon= \fint_{-\pi}^{\pi} \widetilde{\Theta}$.
	Then $p = p(x_3) \vcentcolon= \int_{-\pi}^{x_3} ( \widetilde{\Theta} - c )$ solves \eqref{eq:PV_M_inv_strong}
	with data $\PV$ and $M-c$.
\end{lemma}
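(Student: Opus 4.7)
The idea is to reduce \eqref{eq:PV_M_inv_strong} in the one-dimensional setting to a first-order ODE in $x_3$ that can be solved algebraically by inverting $\phi_M$ pointwise.

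First, I would verify the invertibility of $\phi_m$ by an explicit case split. For $x \leq m$ one has $\min(m-x,\,0)=0$ so $\phi_m(x)=x$, while for $x \geq m$ one has $\min(m-x,\,0)=m-x$ so $\phi_m(x)=\tfrac{x+m}{2}$. The two pieces agree at $x=m$, so $\phi_m$ is a continuous, strictly increasing, piecewise-linear bijection of $\mathbb{R}$ with slope in $\{1/2,\,1\}$, hence a bi-Lipschitz homeomorphism with explicit piecewise-linear inverse.

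Next, I would rewrite \eqref{eq:PV_M_inv_strong} for one-dimensional $p=p(x_3)$ and data $(\PV,\,M-c)$ as
\[
	\partial_3 \bigl[ \phi_{M-c}(\partial_3 p) \bigr] = \PV,
\]
using the definition $\phi_m(y) = y + \tfrac{1}{2}\min(m-y,\,0)$ to collapse the two $x_3$-derivatives in \eqref{eq:PV_M_inv_strong} into one. The crucial algebraic identity, immediate from the definition, is
\[
	\phi_{m-c}(y - c) = (y-c) + \tfrac{1}{2}\min(m-y,\,0) = \phi_m(y) - c.
\]
Setting $\partial_3 p = \tilde{\Theta} - c$ with $\tilde{\Theta} = \phi_M^{-1} \circ A$ therefore yields
\[
	\phi_{M-c}(\partial_3 p) = \phi_M(\tilde{\Theta}) - c = A - c,
\]
and differentiating gives $\partial_3 (A - c) = A' = \PV$, as required.

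Finally, I would check that $p$ is a genuine element of $\mathring{H}^1$ solving \eqref{eq:PV_M_inv_weak} weakly. Since $\PV \in H^{-1}$ pairs only against mean-zero test functions, we may assume without loss of generality that $\int_{-\pi}^{\pi} \PV = 0$, so that $A$ can be taken smooth and $2\pi$-periodic. Then $\tilde{\Theta} = \phi_M^{-1} \circ A$ is Lipschitz and $2\pi$-periodic as the composition of smooth periodic $A$ and $M \mapsto \phi_M^{-1}$ with a bi-Lipschitz map, and the subtraction of $c = \fint_{-\pi}^{\pi} \tilde{\Theta}$ ensures that $p(x_3) = \int_{-\pi}^{x_3} (\tilde{\Theta} - c)$ is itself $2\pi$-periodic; hence $p \in W^{1,\infty} \subseteq \mathring{H}^1$, and an integration by parts in $x_3$ reduces the weak formulation \eqref{eq:PV_M_inv_weak} to the pointwise identity already established. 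The only mildly subtle point---and certainly no genuine obstacle---is the necessity of the simultaneous shift by $c$ in both the solution and the data: one cannot take $\partial_3 p = \tilde{\Theta}$ directly because $\tilde{\Theta}$ in general has nonzero mean, and the identity $\phi_{m-c}(y-c) = \phi_m(y) - c$ is precisely what makes the shifted construction compatible with the PDE.
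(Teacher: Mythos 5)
Your proposal is correct and follows essentially the same route as the paper's proof: the central step in both is the algebraic identity $\phi_{m-c}(y-c)=\phi_m(y)-c$, applied with $y=\widetilde{\Theta}$ to reduce the ODE to $\partial_3(A-c)=\PV$. The only cosmetic differences are that the paper checks invertibility via $\phi_m'\geqslant 1/2$ while you do an explicit case split, and you add a welcome (if brief) remark on periodicity of $p$, which the paper leaves implicit.
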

\begin{proof}
	That $\phi_m$ is invertible follows directly from the fact that it is strictly increasing (since $\phi_m' \geqslant 1/2 > 0$).
	Nonetheless it will be useful to have an explicit representation of its inverse. A direct computation shows that its inverse is given by
	$\phi_m^{-1} (x) = x - \min_0 (m-x)$.
	We may then compute, since $\phi_{m-c} (x-c) = \phi_m(x) - c$, that the identity 
	$
		p' + \frac{1}{2} {\min}_0\, \left( (M-c) - p' \right)
		= \phi_{M-c} (p')
		= (\phi_M \circ \widetilde{\Theta}) - c
	$
	holds.
	Therefore
	\begin{align*}
		\Delta p + \frac{1}{2} \partial_3 \left( {\min}_0\, ((M-c)-\partial_3 p) \right)
		= \left[ p' + \frac{1}{2} {\min}_0\, \left( (M-c) - p' \right) \right] '
	\\
		= (\phi_M \circ \widetilde{\Theta} )'
		= A'
		= \PV,
	\end{align*}
	as desired.
\end{proof}

As an immediate consequence we may conclude this section with its last main result, verifying that the global regularity (including across the interface) deduced in
\fref{Theorem}{thm:Holder_cty_grad} is nearly sharp.
Once again we work with $2\pi$--periodic functions to fix notation but the same result holds for arbitrary periods.

\begin{cor}[Sharpness of the regularity theory]
\label{cor:sharp_reg}
	Consider a smooth periodic function $M = M(x_3) : (-\pi,\,\pi) \to \mathbb{R}$ such that
	$M'(0) \neq 0$ and $\sign M(x) = \sign x$ for every $x\in (-\pi,\,\pi)$.
	In particular this means that $M(0) = 0$ (and that $M'(0) > 0$).
	We introduce $c \vcentcolon= \fint_{-\pi}^{\pi} \min_0 M$ and define $\Theta \vcentcolon= c - \min_0 M$.
	Then the function
	$
		p = p (x_3) \vcentcolon= \int_{-\pi}^{x_3} \Theta
	$
	is a solution of \eqref{eq:PV_M_inv_strong} with data $\PV \equiv 0$ and $M+c$.
	Moreover the gradient of $p$ is Lipschitz continuous everywhere but fails to be differentiable at zero.
\end{cor}
\begin{proof}
	That $p$ is a solution of $\PV$-and-$M$ inversion follows immediately from \fref{Lemma}{lemma:explicit_1d_sols} since $-\min_0 M = \phi_M^{-1} (0)$
	(for $\phi$ as in \fref{Lemma}{lemma:explicit_1d_sols}).
	Moreover, since $\sign M = \sign x$, we have that
	\begin{equation*}
		p'(x) = \Theta (x) = c - \left\{
			\begin{aligned}
				&M(x)	&&\text{if } x < 0 \text{ and } \\
				&0	&&\text{if } x \geqslant 0.
			\end{aligned}
		\right.
	\end{equation*}
	We may then conclude that indeed $p'$ is Lipschitz continuous since $M(0) = 0$, and yet fails to be differentiable at zero since $M'(0) \neq 0$.
\end{proof}

A simple example of \fref{Corollary}{cor:sharp_reg} in action is provided below.

\begin{example}
\label{example:sharp_reg}
	Consider $M:(-\pi,\,\pi)\to\mathbb{R}$ given by $M(x_3) = \sin (x_3) - 1/\pi$. Then, for $\PV \equiv 0$, the function $p = p(x_3)$ given by
	\begin{equation*}
		p(x_3) = -\frac{x}{\pi} + \left\{
			\begin{aligned}
				&\cos x	&&\text{if } x \leqslant 0 \text{ and } \\
				&1	&&\text{if } x > 0
			\end{aligned}
		\right.
	\end{equation*}
	is a solution of $\PV$-and-$M$ inversion \eqref{eq:PV_M_inv_strong}. Indeed: $p'(x_3) = - \min ( \sin x_3,\, 0) - 1/\pi$ and hence
	$
		\left[ p' + \frac{1}{2} \min ( M - p',\, 0) \right] ' = 0,
	$
	as desired. In particular we see that $p$ is $C^{1,\,1}$ but not $C^2$.
	This so-called ``baseball cap'' example is depicted in \fref{Figure}{fig:sharp_reg}.

	Note that this example is constructed by following the recipe provided above in \fref{Lemma}{lemma:explicit_1d_sols},
	using $\sin x_3$ as profile for $M$ such that the constant $c$ is given by $c = - \fint_{-\pi}^\pi \min_0 (\sin x_3) \,dx_3 = 1/\pi$.

	\begin{figure}
		\centering
		\includegraphics{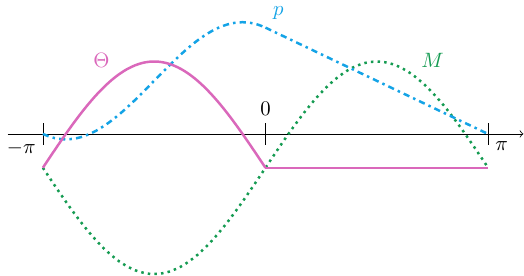}
		\caption{
			The ``baseball cap'': a very simple example where the gradient of $p$ (i.e. $\Theta$) is Lipschitz continuous but not differentiable: $M = \sin (x_3) - 1/\pi$ and $\PV = 0$.
			The name refers to the profile of the equivalent potential temperature $\Theta$.
			}
		\label{fig:sharp_reg}
	\end{figure}

\end{example}

	Note that with \fref{Corollary}{cor:sharp_reg} in hand we have now proved every part of \fref{Theorem}{thm:wp}, the main result of this paper.
\begin{proof}[Proof of \fref{Theorem}{thm:wp}]
	We combine \fref{Theorems}{thm:exist_and_unique}, \ref{thm:Holder_cty_grad}, and \ref{thm:higher_reg} and \fref{Corollaries}{cor:phases_are_open} and \ref{cor:sharp_reg}.
\end{proof}


\section{Smoothing}
\label{sec:smoothing}

The purpose of this section is two-fold.
First it details how using $\PV$-and-$M$ inversion in its form \eqref{eq:PV_M} which explicitly involves Heavisides makes it difficult to determine whether that inversion is an elliptic problem or not.
This is done by showing that so-called agnostic smoothing of the Heavisides,
which does not take into account their dependence on the moist variable $M$ and the vertical gradient of the pressure $\partial_3 p$,
fails to determine whether or not $\PV$-and-$M$ inversion is elliptic.

Second we introduce in this section a more careful way to smooth out the Heavisides which relies on the variational formulation of $\PV$-and-$M$ inversion.
The resulting regularised PDE has smooth solutions (provided $\PV$-and-$M$ are smooth) which converge to the solution of the unregularised $\PV$-and-$M$ inversion.
We will also observe that this process is reversible: given an appropriately regularised version of $\PV$-and-$M$ inversion we may reconstruct the corresponding energy.
We then conclude this section with a discussion of the fact that minimizers of the smoothed out energy converge to the minimizer of the true energy.

First we discuss the subtlety in determining the ellipticity of $\PV$-and-$M$ inversion.

\begin{remark}[Agnostic smoothing fails to determine whether or not $\PV$-and-$M$ inversion is elliptic]
\label{rmk:agnostic_smoothing}
	In order to streamline the computations we will begin with a purely algebraic manipulation of $\PV$-and-$M$ inversion in its form explicitly invoking Heavisides.
	Since $H_u + H_s = 1$, \eqref{eq:PV_M} may be rewritten as
	\begin{equation}
	\label{eq:PV_M_Heavi_only_Hu}
		\Delta p + \frac{1}{2} \partial_3 \left( (M - \partial_3 p) H_u \right) = \PV.
	\end{equation}
	We now suppose that $\mathcal{H}_u$ is a smooth approximation of $H_u$, remaining agnostic as to how precisely the approximation is made.
	We only posit that $\mathcal{H}_u = \mathcal{H}_u (M,\, \partial_3 p)$.
	This is sensible since the original Heavisides depend on the water content $q$, which itself may be written in terms of $M$ and $\partial_3 p$
	(the horizontal derivatives of $p$ play no part in the determination of $q$).
	We thus consider \begin{equation}
	\label{eq:PV_M_Heavi_only_Hu_reg}
		\Delta p + \frac{1}{2} \partial_3 \left( (M-\partial_3 p) \mathcal{H}_u \right) = \PV
	\end{equation}
	as a regularised version of \eqref{eq:PV_M_Heavi_only_Hu}.
	Since $\mathcal{H}_u = \mathcal{H}_u (M,\,\partial_3 p)$ is smooth we may compute that
	\begin{equation*}
		\partial_3 \left( (M - \partial_3 p) \mathcal{H}_u \right)
		= ( \partial_3 M - \partial_3^2 p ) \mathcal{H}_u
			+ (M - \partial_3 p ) \left(
				( \partial_M \mathcal{H}_u ) \partial_3 M
				+ ( \partial_p \mathcal{H}_u ) \partial_3^2 p
			\right)
	\end{equation*}
	such that \eqref{eq:PV_M_Heavi_only_Hu_reg} may be written equivalently as
	\begin{align*}
		\PV
		= \Delta_h p + \left( 1 - \frac{1}{2} \mathcal{H}_u + \frac{1}{2} (M - \partial_3 p) \partial_p \mathcal{H}_u \right) \partial_3^2 p
	\\
		+ \frac{1}{2} (\partial_3 M) \mathcal{H}_u + \frac{1}{2} (M - \partial_3 p) ( \partial_M \mathcal{H}_u ) \partial_3 M.
	\end{align*}
	The principal part is thus the operator
	\begin{equation*}
		\Delta_h + \left( 1 - \frac{1}{2} \mathcal{H}_u + \frac{1}{2} (M - \partial_3 p) \partial_p \mathcal{H}_u \right) \partial_3^2.
	\end{equation*}
	Here lies the difficulty: while sensible approximations would ensure that the regularised Heaviside satisfies $0 \leqslant \mathcal{H}_u \leqslant 1$
	such that $1 - \frac{1}{2} \mathcal{H}_u$ is uniformly bounded below by $1/2$,
	we know nothing about the sign of $ \frac{1}{2} (M - \partial_3 p) \partial_p \mathcal{H}_u$,
	and therefore know nothing about the sign of the coefficient of $\partial_3^2$.
	This means that we cannot say anything about the ellipticity of $\PV$-and-$M$ inversion with this approach.

    We emphasise that, as this remark details, this way of regularizing the PDE is certainly rather naive. The point is that if the variational structure underlying the PDE is not identified, this naive regularisation remains the best we can do, with all of its drawbacks detailed above.
\end{remark}

We now turn our attention towards a more careful way to smooth out the Heavisides, leveraging the variational formulation.
This smoothing relies on mollification -- see \fref{Definition}{def:mollifier} for the convention used here -- and at the crux of the computations
lies the expression for the smoothed out version of the function $\min ( \,\cdot\, ,\, 0)$ recorded below.
This mollification is also depicted in \fref{Figure}{fig:reg}.

\begin{lemma}[Explicit formulae for $ {\min}_\varepsilon $ and its derivative]
\label{lemma:explicit_form_min_epsilon}
	We introduce, for any $x\in\mathbb{R}$, the notation $\min_0 (x) \vcentcolon= \min (x,\,0)$ for $x\in\mathbb{R}$ and,
	for $\varphi_\varepsilon$ a \hyperref[def:mollifier]{standard mollifier},
	$\min_\varepsilon \vcentcolon= \min_0 \ast \varphi_\varepsilon$.
	If $\varphi_\varepsilon$ is \hyperref[def:mollifier]{centered} then
	\begin{equation}
	\label{eq:explicit_form_min_epsilon}
		{\min}_\varepsilon (x) = \left\{
		\begin{aligned}
			& x			&&\text{if } x \leqslant - \varepsilon,\\
			& F_\varepsilon (x)	&&\text{if } \lvert x \rvert < \varepsilon, \text{ and } \\
			& 0			&&\text{if } x \geqslant \varepsilon
		\end{aligned}
		\right.
	\end{equation}
	where, for $ \lvert x \rvert < \varepsilon$,
	\begin{equation}
	\label{eq:explicit_form_min_epsilon_F}
		F_\varepsilon (x) \vcentcolon= \frac{x}{2} + \frac{1}{2} \int_x^{\varepsilon} (x-y) \varphi_\varepsilon (y) dy - \frac{1}{2} \int_{-\varepsilon}^x (x-y) \varphi_\varepsilon (y) dy
	\end{equation}
\end{lemma}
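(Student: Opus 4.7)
The plan is to evaluate the convolution $\min_\varepsilon(x) = \int_{-\varepsilon}^{\varepsilon}\min_0(x-y)\varphi_\varepsilon(y)\,dy$ directly, using the fact that $\varphi_\varepsilon$ has support in $(-\varepsilon,\varepsilon)$. Since $\min_0(x-y) = x-y$ exactly when $y \geq x$ and vanishes otherwise, the computation reduces to an elementary case analysis based on the position of $x$ relative to the interval $(-\varepsilon,\varepsilon)$.

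First I would handle the two outer regimes. When $x \leq -\varepsilon$, every $y$ in the support of $\varphi_\varepsilon$ satisfies $y > x$, so the integrand equals $x-y$ throughout, and one obtains $\min_\varepsilon(x) = x\int \varphi_\varepsilon - \int y\,\varphi_\varepsilon(y)\,dy = x$, invoking both the normalisation $\int \varphi_\varepsilon = 1$ and the centering assumption $\int y\,\varphi_\varepsilon(y)\,dy = 0$. When $x \geq \varepsilon$ the opposite inequality $y < x$ holds on the whole support, the integrand vanishes identically, and $\min_\varepsilon(x) = 0$. In the intermediate regime $\lvert x \rvert < \varepsilon$, splitting the support at $y = x$ yields directly $\min_\varepsilon(x) = \int_x^{\varepsilon}(x-y)\varphi_\varepsilon(y)\,dy$.

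The only non-cosmetic step is to reconcile this last expression with the particular form \eqref{eq:explicit_form_min_epsilon_F} claimed for $F_\varepsilon$. The plan there is to invoke the centering identity once more, now in the form $\tfrac{x}{2} = \tfrac{1}{2}\int_{-\varepsilon}^{\varepsilon}(x-y)\varphi_\varepsilon(y)\,dy$ (the $-y$ term integrates to zero), split this integral at $y = x$, and substitute into the definition of $F_\varepsilon$. The two $\int_{-\varepsilon}^x$ contributions then cancel, the two $\int_x^\varepsilon$ contributions combine, and what remains is exactly $\int_x^{\varepsilon}(x-y)\varphi_\varepsilon(y)\,dy$, as required. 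No real obstacle arises: the proof is pure bookkeeping. What is worth emphasizing is that the centering hypothesis is essential at two distinct points — once to produce the clean value $x$ in the regime $x \leq -\varepsilon$ (otherwise a first-moment correction would persist), and again for the algebraic reconciliation with $F_\varepsilon$ in the transition regime.
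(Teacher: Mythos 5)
Your proposal is correct and follows essentially the same route as the paper's own proof: split the convolution at $y=x$, dispose of the two outer regimes using the support and the normalisation/centering of $\varphi_\varepsilon$, and reconcile $\int_x^\varepsilon (x-y)\varphi_\varepsilon(y)\,dy$ with $F_\varepsilon$ by exploiting the centering identity once more. The paper phrases that last step as ``symmetrizing'' the two equivalent expressions $\int_x^\varepsilon(x-y)\varphi_\varepsilon\,dy$ and $x-\int_{-\varepsilon}^x(x-y)\varphi_\varepsilon\,dy$, while you substitute the split identity $\tfrac{x}{2}=\tfrac12\int_{-\varepsilon}^x+\tfrac12\int_x^\varepsilon$ into the definition of $F_\varepsilon$; these are the same algebraic manipulation written in opposite directions.
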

\begin{proof}
	We begin by noting that, since $\supp\varphi_\varepsilon \subseteq (-\varepsilon,\, \varepsilon)$,
	\begin{equation}
	\label{eq:explicit_form_min_epsilon_int}
		{\min}_\varepsilon (x) = \int_{\max (-\varepsilon,\, x)}^\varepsilon (x-y) \varphi_\varepsilon (y) dy.
	\end{equation}
	To conclude the computation it thus suffices to split into three cases.
	\begin{itemize}
		\item	Case 1: $x\leqslant -\varepsilon$.
			Then $\max(-\varepsilon,\, x) = -\varepsilon$ and so, since $\varphi_\varepsilon$ is centered, \eqref{eq:explicit_form_min_epsilon_int}
			tells us that $\min_\varepsilon (x) = x$.
		\item	Case 2: $x \geqslant \varepsilon$.
			Then $\max(-\varepsilon,\, x) = x \geqslant \varepsilon$ and so the integral in \eqref{eq:explicit_form_min_epsilon_int} is taken over an empty set,
			such that $\min_\varepsilon (x) = 0$.
		\item	Case 3: $ \lvert x \rvert < \varepsilon$ -- a.k.a. the interesting case.
			Then $\max (-\varepsilon,\,x) = x$.
			In particular since $\int_x^\varepsilon = \int_{-\varepsilon}^\varepsilon - \int_{-\varepsilon}^x$ and since $ \varphi_\varepsilon  $ is centered we compute that
			\begin{equation*}
				{\min}_\varepsilon (x) = \int_x^\varepsilon (x-y) \varphi_\varepsilon (y) dy
				= x - \int_{-\varepsilon}^x (x-y) \varphi_\varepsilon (y) dy.
			\end{equation*}
			Symmetrizing these two expressions for $\min_\varepsilon$ produces, as desired
			\begin{equation*}
				{\min}_\varepsilon (x) = \frac{1}{2} \int_x^\varepsilon (x-y) \varphi_\varepsilon (y) dy + \frac{1}{2} \left(
					x - \int_{-\varepsilon}^x \varphi_\varepsilon (y) dy
				\right)
				= F_\varepsilon (x).
			\end{equation*}
	\end{itemize}
	The three cases above verify \eqref{eq:explicit_form_min_epsilon}.
\end{proof}

\begin{figure}
	\centering
	\includegraphics{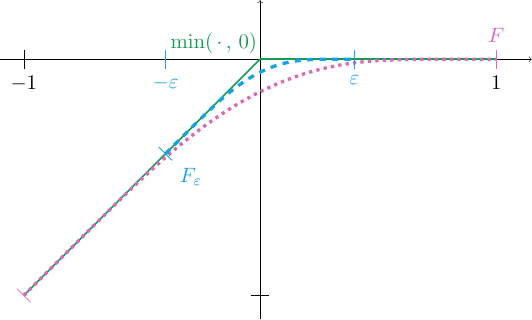}
	\caption{
		Following the notation of \fref{Lemma}{lemma:explicit_form_min_epsilon}, $\min_\varepsilon$ is the mollification of $\min_0 \vcentcolon= \min ( \,\cdot\, ,\, 0)$.
		The explicit form of the mollification is given in \fref{Lemma}{lemma:explicit_form_min_epsilon}
		in terms of $F_\varepsilon$ such that $\min_\varepsilon = F_\varepsilon$ in $(-\varepsilon,\, \varepsilon)$
		while $\min_\varepsilon = \min_0$ elsewhere. Here we show $\min_0$, the reference mollification $F = F_1$, and $F_\varepsilon$ for $0 < \varepsilon < 1$.
		Having an explicit expression for $\min_\varepsilon$ is the key element required to have an explicit form of regularised $\PV$-and-$M$ inversion
		(see \fref{Theorem}{thm:regularised_PV_and_M_inv}).
	}
	\label{fig:reg}
\end{figure}

With the expressions for $\min_\varepsilon$ and its derivative obtained in \fref{Lemma}{lemma:explicit_form_min_epsilon} above in hand
we may deduce the form taken by the smoothed out $\PV$-and-$M$ inversion.

\begin{theorem}[Regularised $\PV$-and-$M$ inversion]
\label{thm:regularised_PV_and_M_inv}
	Let $ \varphi_\varepsilon $ be a \hyperref[def:mollifier]{standard centered mollifier} such that, for
	$f_0 (s) \vcentcolon= \frac{1}{2} {\min (s,\,0)}^2$ we define $f_\varepsilon \vcentcolon= f_0 \ast \varphi_\varepsilon $.
	Then for $M\in L^2$, $\PV\in H^{-1}$, and $p\in\mathring{H}^1$ we define
	\begin{equation}
	\label{eq:reg_en}
		E_\varepsilon (p) \vcentcolon= \int_{\mathbb{T}^3} \frac{1}{2} {\lvert \nabla p \rvert}^2  - \frac{1}{2} f_\varepsilon (M - \partial_3 p) + \langle \PV,\, p \rangle.
	\end{equation}
	For $\min_\varepsilon$ defined in \fref{Lemma}{lemma:explicit_form_min_epsilon} we have that, for any $p,\,\phi\in \mathring{H}^1 $,
	\begin{equation}
	\label{eq:reg_Gateaux_deriv}
		DE_\varepsilon (p) \phi = \int_{\mathbb{T}^3} \nabla p \cdot \nabla\phi  - \frac{1}{2} {\min}_\varepsilon  (M - \partial_3 p) (\partial_3 \phi)
			+ \langle \PV,\, \phi \rangle
	\end{equation}
	such that $p\in \mathring{H}^1 $ is the minimizer of $E_\varepsilon$ if and only if $p$ is an $ \mathring{H}^1 $-weak solution of
	\begin{equation}
	\label{eq:regularised_PV_and_M_inv}
		-\Delta p - \frac{1}{2} \partial_3 \left( {\min}_\varepsilon (M-\partial_3 p) \right) = - \PV.
	\end{equation}
	In particular if we introduce
	\begin{equation}
	\label{eq:reg_Heavisides}
		H_u^\varepsilon \vcentcolon= \mathds{1} ( M - \partial_3 p \leqslant \varepsilon )
		\text{ and } 
		H_{int}^\varepsilon \vcentcolon= \mathds{1} ( - \varepsilon < M - \partial_3 p < \varepsilon )
	\end{equation}
	then we may write \eqref{eq:regularised_PV_and_M_inv} as
	\begin{equation}
	\label{eq:regularised_PV_and_M_inv_with_Heavisides}
		-\Delta p - \frac{1}{2} \partial_3 \left(
			(M - \partial_3 p) H_u^\varepsilon + F_\varepsilon (M-\partial_3 p) H_{int}^\varepsilon
		\right),
	\end{equation}
	for $F_\varepsilon$ defined in \eqref{eq:explicit_form_min_epsilon_F}.
\end{theorem}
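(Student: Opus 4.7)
The plan is to replay the arguments that led to the variational formulation in Section~\ref{sec:var_form_and_prop_en}, now applied to the mollified energy $E_\varepsilon$, with the added benefit that $f_\varepsilon$ is smooth so no subtle approximation arguments are required. The key algebraic fact driving everything is the identity $f_\varepsilon' = {\min}_\varepsilon$: since $f_0(s) = \tfrac{1}{2}{\min(s,0)}^2$ is $C^1$ with $f_0'(s) = \min(s,0)$, commuting the derivative through the convolution yields $f_\varepsilon' = f_0' \ast \varphi_\varepsilon = \min_0 \ast \varphi_\varepsilon = {\min}_\varepsilon$.

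I would first compute $DE_\varepsilon$ via a Dominated Convergence argument on $t \mapsto E_\varepsilon(p + t\phi)$, exactly as in \fref{Lemma}{lemma:Gateaux_diff_energy}. Since $f_\varepsilon$ is smooth, $\partial_t \bigl[ f_\varepsilon(M - \partial_3 p - t\partial_3 \phi) \bigr] = - f_\varepsilon'(\cdot)\, \partial_3 \phi = - {\min}_\varepsilon (\cdot)\, \partial_3 \phi$, dominated for $t$ in a bounded interval by an $L^1$ function on $\mathbb{T}^3$ using the fact that ${\min}_\varepsilon$ has linear growth. Evaluating at $t = 0$ yields formula \eqref{eq:reg_Gateaux_deriv}. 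Since the derivative has the same structure as in \fref{Corollary}{cor:Frechet_diff_en}, the Lipschitz property of ${\min}_\varepsilon$ (inherited from $\min_0$) upgrades G\^{a}teaux to Fr\'{e}chet differentiability by the same estimate.

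To obtain the minimizer characterisation I would replay the calculation of \fref{Lemma}{lemma:strg_cnvx_e} now with $f_\varepsilon$ in place of $f_0$, but avoiding the limit step: because $f_0'$ is $1$--Lipschitz one has $0 \leqslant f_\varepsilon'' \leqslant 1$ pointwise, so the Hessian of the integrand is $I - \tfrac{1}{2} f_\varepsilon''(M - \partial_3 p)\, e_3 \otimes e_3 \geqslant \tfrac{1}{2} I$, and \fref{Lemma}{lemma:second_order_charac_strg_conv} yields that $E_\varepsilon$ is $\tfrac{1}{2}$--strongly convex over $\mathring{H}^1$. Combined with the Fr\'{e}chet differentiability just established, strong convexity implies that $p$ minimizes $E_\varepsilon$ if and only if $DE_\varepsilon(p) = 0$, and by \eqref{eq:reg_Gateaux_deriv} this is precisely the weak form of \eqref{eq:regularised_PV_and_M_inv}.

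The remaining task is to rewrite \eqref{eq:regularised_PV_and_M_inv} in the form \eqref{eq:regularised_PV_and_M_inv_with_Heavisides}, which is a direct case analysis based on the explicit form of ${\min}_\varepsilon$ supplied by \fref{Lemma}{lemma:explicit_form_min_epsilon}. Partitioning $\mathbb{T}^3$ into the three regions $\{M - \partial_3 p \leqslant -\varepsilon\}$, $\{\lvert M - \partial_3 p \rvert < \varepsilon\}$, and $\{M - \partial_3 p \geqslant \varepsilon\}$, \fref{Lemma}{lemma:explicit_form_min_epsilon} gives ${\min}_\varepsilon(M - \partial_3 p)$ equal to $M - \partial_3 p$, $F_\varepsilon(M - \partial_3 p)$, and $0$ respectively; expressing this with the indicator functions $H_u^\varepsilon$ and $H_{int}^\varepsilon$ (and noting the third region contributes nothing) produces the sum inside $\partial_3$ in \eqref{eq:regularised_PV_and_M_inv_with_Heavisides}. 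I do not anticipate any genuine obstacle: the two potentially delicate steps, namely passing from G\^{a}teaux to Fr\'{e}chet differentiability and verifying strong convexity, are strictly easier here than in Section~\ref{sec:var_form_and_prop_en} because $f_\varepsilon$ is smooth, and the final identification is pure bookkeeping from \fref{Lemma}{lemma:explicit_form_min_epsilon}.
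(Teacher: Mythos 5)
Your proof is correct and follows essentially the same route as the paper's own (very terse) proof: compute $f_\varepsilon' = f_0'\ast\varphi_\varepsilon = {\min}_\varepsilon$, differentiate the energy, integrate by parts, and read off the Heaviside decomposition from \fref{Lemma}{lemma:explicit_form_min_epsilon}. In fact your version is more complete than the paper's, which simply says ``differentiate, integrate by parts, apply the lemma'' and never justifies the ``minimizer $\iff$ weak solution'' equivalence; your observation that $0 \leqslant f_\varepsilon'' \leqslant 1$ gives $\tfrac{1}{2}$-strong convexity of $E_\varepsilon$, together with Fr\'{e}chet differentiability, is exactly the missing ingredient (and mirrors \fref{Lemma}{lemma:var_form_PV_and_M_inv}). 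One small point worth flagging: the partition you write down, $\left\{ M - \partial_3 p \leqslant -\varepsilon \right\}$, $\left\{ \lvert M - \partial_3 p \rvert < \varepsilon \right\}$, $\left\{ M - \partial_3 p \geqslant \varepsilon \right\}$, is the correct one, but the statement's definition $H_u^\varepsilon = \mathds{1}(M - \partial_3 p \leqslant \varepsilon)$ overlaps with $H_{int}^\varepsilon$ and therefore does not reproduce ${\min}_\varepsilon$; it should read $H_u^\varepsilon = \mathds{1}(M - \partial_3 p \leqslant -\varepsilon)$, which is what your case analysis implicitly uses.
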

\begin{proof}
	\eqref{eq:reg_Gateaux_deriv} is obtained from differentiating \eqref{eq:reg_en} since $f_\varepsilon ' = f_0' \ast \varphi_\varepsilon = \min_\varepsilon$.
	Integrating by parts then yields \eqref{eq:regularised_PV_and_M_inv} and using \fref{Lemma}{lemma:explicit_form_min_epsilon} produces yields \eqref{eq:regularised_PV_and_M_inv_with_Heavisides}.
\end{proof}

We now turn our attention to the matter of \emph{reversibility}: given an appropriately smoothed out version of $\PV$-and-$M$ inversion,
can we reconstruct the corresponding (smooth) energy? The answer is yes.
In \fref{Lemma}{lemma:prop_F} we begin by recording some properties of the smoothed out version of $\PV$-and-$M$ inversion,
then in \fref{Theorem}{thm:reversibility} we show that if these properties hold for \emph{any} smoothed out version of $\PV$-and-$M$ inversion, then a smooth energy can be reconstructed
via mollification.

\begin{lemma}[Properties of $F$]
\label{lemma:prop_F}
	Let $ \varphi_\varepsilon $ be a \hyperref[def:mollifier]{standard centered mollifier} and consider $F_\varepsilon$ as in \fref{Lemma}{lemma:explicit_form_min_epsilon}.
	Since $F_\varepsilon = \varepsilon F \left( \frac{\cdot}{\varepsilon} \right)$
	we define $F \vcentcolon= F_1$.
	Then $F$ satisfies
	\begin{equation}
	\label{eq:prop_F}
		F(-1) = -1,\,
		F'(-1) = 1,\,
		F(1) = F'(1) = 0,\,
		F''\leqslant 0, \text{ and } 
		\supp F'' \subseteq (-1,\,1).
	\end{equation}
\end{lemma}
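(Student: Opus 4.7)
The approach is a direct computation from the explicit formula for $F_\varepsilon$ in \eqref{eq:explicit_form_min_epsilon_F}, transferring the conclusions to $F = F_1$ via the scaling $F_\varepsilon = \varepsilon F(\cdot/\varepsilon)$. The only hypotheses ultimately used are that $\varphi_\varepsilon$ is centered, normalized, nonnegative, and supported in $(-\varepsilon, \varepsilon)$.

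First I would differentiate the explicit formula for $F_\varepsilon$. Applying the Leibniz rule to both integrals, the endpoint contributions vanish because the integrand $(x-y)\varphi_\varepsilon(y)$ vanishes at $y = x$, yielding
\begin{equation*}
	F_\varepsilon'(x) = \frac{1}{2} + \frac{1}{2}\int_x^\varepsilon \varphi_\varepsilon(y)\,dy - \frac{1}{2}\int_{-\varepsilon}^x \varphi_\varepsilon(y)\,dy,
\end{equation*}
and differentiating once more gives $F_\varepsilon''(x) = -\varphi_\varepsilon(x)$. Non-negativity of $\varphi_\varepsilon$ immediately yields $F_\varepsilon'' \leqslant 0$, while the support condition on $\varphi$ gives $\supp(F'') \subseteq (-1,1)$ after rescaling.

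Next I would evaluate $F_\varepsilon$ and $F_\varepsilon'$ at $x = \pm \varepsilon$. Both endpoint evaluations of $F_\varepsilon'$ reduce to $\tfrac{1}{2} \pm \tfrac{1}{2}\int_{-\varepsilon}^\varepsilon \varphi_\varepsilon = \tfrac{1}{2} \pm \tfrac{1}{2}$ by normalization, giving $F_\varepsilon'(-\varepsilon) = 1$ and $F_\varepsilon'(\varepsilon) = 0$. For the values themselves, at $x = -\varepsilon$ the formula collapses to $-\varepsilon/2 + \tfrac{1}{2} \int_{-\varepsilon}^\varepsilon (-\varepsilon - y)\varphi_\varepsilon(y)\,dy$, which using the \emph{centering} property $\int y\varphi_\varepsilon(y)\,dy = 0$ evaluates to $-\varepsilon$; symmetrically $F_\varepsilon(\varepsilon) = 0$. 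Rescaling then delivers the four boundary identities in \eqref{eq:prop_F}.

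No step should pose any real obstacle -- the computation is entirely mechanical. The only mild point of care is that the statement $\supp(F'') \subseteq (-1,1)$ implicitly extends $F$ to all of $\mathbb{R}$, which is justified by identifying $F_\varepsilon$ with the smooth function $\min_\varepsilon = \min_0 \ast \varphi_\varepsilon$ on its domain; the latter agrees with the piecewise-linear $\min_0$ outside $[-\varepsilon, \varepsilon]$, where its second derivative vanishes, so the extension is smooth and has the asserted support.
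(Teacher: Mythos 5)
Your proof is correct and takes essentially the same route as the paper: differentiate the explicit formula \eqref{eq:explicit_form_min_epsilon_F} twice to get $F_\varepsilon'' = -\varphi_\varepsilon$, and evaluate the endpoint values of $F_\varepsilon$ and $F_\varepsilon'$ at $\pm\varepsilon$ using the normalization and centering of $\varphi_\varepsilon$, then rescale. Your closing remark about the extension of $F$ to all of $\mathbb{R}$ (via identification with $\min_\varepsilon$) is a welcome clarification that the paper leaves implicit when asserting $\supp F'' \subseteq (-1,1)$.
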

\begin{proof}
	The identities $F(-1) = -1$ and $F(1) = 0$ follow immediately from $ \varphi$ being \hyperref[def:mollifier]{normalised} and centered and
	from the definition of $F_\varepsilon$ in \eqref{eq:explicit_form_min_epsilon_F}.
	Similarly the identities $F'(-1) = 1$ and $F'(1) = 0$ follow from $ \varphi$ being normalised.
	Differentiating \eqref{eq:explicit_form_min_epsilon_F} twice tells us that $F'' = -\varphi$ and so that fact that $F'' \leqslant 0$ follows from the non-negativity of $\varphi$.
\end{proof}

\begin{remark}[Demystifying the relation between $\varphi$ and $F$]
\label{rmk:demistify_rel_F_varphi}
	In the proof of \fref{Lemma}{lemma:prop_F} above we have obtained that $F'' = -\varphi$.
	This provides an alternative route to \eqref{eq:explicit_form_min_epsilon_F} expressing $F$ in terms of $\varphi$.
	Indeed, by taking the mean of the representations of $F(x)$ using a Taylor expansion about $-1$ and $1$, with the remainder in integral form, tells us that
	\begin{align*}
		F(x) = \frac{F(-1) + F(1)}{2} + \frac{F'(-1) - F'(1)}{2} + \frac{F'(-1) + F'(1)}{2} x
	\\
			+ \frac{1}{2} \int_{-1}^x (x-y) F''(y) dy - \frac{1}{2} \int_x^1 (x-y) F''(y) dy.
	\end{align*}
	Note that this identity holds for \emph{any} twice continuously differentiable function.
	In particular, if \eqref{eq:prop_F} holds and $F'' = -\varphi$ then we recover \eqref{eq:explicit_form_min_epsilon_F}.
\end{remark}

We now state and prove the reversibility theorem.

\begin{theorem}[Reversibility]
\label{thm:reversibility}
	Suppose that a smooth function $F$ satisfies \eqref{eq:prop_F}.
	Then there exists a \hyperref[def:mollifier]{standard centered mollifier} $\varphi$, given by $\varphi = -F''$,
	such that if we define $E_\varepsilon$ as in \eqref{eq:reg_en},
	$H_u^\varepsilon$ and $H_\text{int}^\varepsilon$ as in \eqref{eq:reg_Heavisides},
	and let $F_\varepsilon \vcentcolon= \varepsilon F \left( \frac{\cdot}{\varepsilon} \right)$
	then, for any $p\in \mathring{H}^1 $,
	the strong form of $DE_\varepsilon (p) \phi = 0$ for every $\phi\in \mathring{H}^1 $ is precisely \eqref{eq:regularised_PV_and_M_inv_with_Heavisides}.
\end{theorem}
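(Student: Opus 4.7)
The plan is to show that $\varphi := -F''$ is a standard centered mollifier in the sense of \fref{Definition}{def:mollifier}, and then simply invoke \fref{Theorem}{thm:regularised_PV_and_M_inv} with this mollifier to conclude. The only real content is verifying each of the defining properties of a mollifier; \emph{everything else has already been done} in the preceding results (especially the explicit computation of $\min_\varepsilon$ in \fref{Lemma}{lemma:explicit_form_min_epsilon} and the Taylor-integral identity in \fref{Remark}{rmk:demistify_rel_F_varphi}).

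\textbf{Step 1: verify $\varphi = -F''$ is a standard centered mollifier.}
First I would use the hypotheses on $F$ to check, in order: smoothness (inherited from $F$); non-negativity, which follows from $F'' \leqslant 0$; support in $(-1,1)$, which follows directly from $\supp F'' \subseteq (-1,1)$; normalisation, via
\begin{equation*}
	\int_{\mathbb{R}} \varphi = -\int_{-1}^{1} F''(y)\,dy = -\bigl(F'(1) - F'(-1)\bigr) = -(0 - 1) = 1;
\end{equation*}
and finally the centering condition, via an integration by parts:
\begin{equation*}
	\int_{\mathbb{R}} y\,\varphi(y)\,dy
	= -\int_{-1}^{1} y F''(y)\,dy
	= -\bigl[y F'(y)\bigr]_{-1}^{1} + \int_{-1}^{1} F'(y)\,dy
	= -\bigl(0+1\bigr) + \bigl(F(1) - F(-1)\bigr)
	= 0.
\end{equation*}
Each of the five boundary/sign conditions packaged in \eqref{eq:prop_F} is used exactly once in these checks, which is a reassuring sign that the hypotheses are sharp.

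\textbf{Step 2: recover $F$ from $\varphi$ and invoke the previous theorem.}
With $\varphi$ in hand, \fref{Theorem}{thm:regularised_PV_and_M_inv} constructs its own function -- call it $\widetilde F_\varepsilon$ -- via the explicit formula \eqref{eq:explicit_form_min_epsilon_F}, and asserts that the strong form of $DE_\varepsilon(p)\phi = 0$ is \eqref{eq:regularised_PV_and_M_inv_with_Heavisides} with $\widetilde F_\varepsilon$ in place of $F_\varepsilon$. To finish, I need only verify that $\widetilde F_\varepsilon$ agrees with the $F_\varepsilon = \varepsilon F(\cdot/\varepsilon)$ we started from. By scaling this reduces to showing $\widetilde F_1 = F$, which is precisely the content of \fref{Remark}{rmk:demistify_rel_F_varphi}: the Taylor-with-integral-remainder identity recorded there expresses any $C^2$ function satisfying \eqref{eq:prop_F} in terms of its second derivative through the formula \eqref{eq:explicit_form_min_epsilon_F}, and $F'' = -\varphi$ by construction.

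\textbf{Main obstacle.}
There is no genuine obstacle beyond bookkeeping: the theorem is really a consistency statement saying that the map $\varphi \mapsto F_\varepsilon$ from \fref{Lemma}{lemma:explicit_form_min_epsilon} is inverted by $F \mapsto -F''$. The most delicate point is simply to notice that the five conditions \eqref{eq:prop_F} are precisely what is needed to reverse-engineer a mollifier -- smoothness and the sign condition produce a nonnegative smooth density, the support condition confines it to $(-1,1)$, and the two pairs of boundary values at $\pm 1$ encode normalisation and centering respectively. Once Step 1 is complete, the identification in Step 2 is immediate from results already proved.
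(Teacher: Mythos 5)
Your proof is correct and follows essentially the same route as the paper's: the entire content is to verify that $\varphi = -F''$ is a standard centered mollifier, and then invoke \fref{Theorem}{thm:regularised_PV_and_M_inv}. Your Step~1 computations (normalisation via $-\int_{-1}^1 F'' = F'(-1)-F'(1) = 1$, and centering via the integration by parts $\int yF'' = [yF']_{-1}^1 - \int F'$) match the paper's argument term for term.

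The one point where you go further than the paper's proof is your Step~2: you explicitly note that to conclude, one must identify the $F_\varepsilon$ built from $\varphi$ via \eqref{eq:explicit_form_min_epsilon_F} inside \fref{Theorem}{thm:regularised_PV_and_M_inv} with the $F_\varepsilon := \varepsilon F(\cdot/\varepsilon)$ appearing in the statement being proved. The paper's proof ends after the mollifier check, leaving this identification implicit; it is however exactly what \fref{Remark}{rmk:demistify_rel_F_varphi} establishes (the Taylor-with-integral-remainder identity shows that any function satisfying \eqref{eq:prop_F} with $F'' = -\varphi$ recovers \eqref{eq:explicit_form_min_epsilon_F}), and you correctly invoke that remark. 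Making this logical dependence explicit is a small but genuine improvement in completeness over the paper's own phrasing, without changing the argument's structure.
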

\begin{proof}
	All that we need to do is verify that $\varphi$ is indeed a standard centered mollifier.
	Clearly $\varphi = - F''$ is smooth, supported on $(-1,\,1)$, and since $F''\leqslant 0$ by assumption we know that $\varphi$ is non-negative.
	The fact that $\varphi$ is \hyperref[def:mollifier]{normalised} follows from the values of $F'$ at $\pm 1$,
	and similarly the fact that $\varphi$ is centered follows from the identity $\int yF'' = yF' - \int F'$ and the values of $F$ and $F'$ at $\pm 1$.
\end{proof}

To conclude this section we turn our attention towards the proof that minimizers of the smoothed out energy converge to the minimizer of the true energy.
In order to do so we first record the following result concerning the derivative of the smoothed out energy.

\begin{lemma}[The derivative of the $\varepsilon$-regularisation is $\varepsilon$-close to the true derivative]
\label{lemma:grad_reg_epsilon_close_true_grad}
	Let $M\in L^2$, $\PV\in H^{-1}$, and let $E$ be the energy introduced in \fref{Definition}{def:energy}.
	Let $\varphi$ be a \hyperref[def:mollifier]{standard mollifier} and let $E_\varepsilon$ be as defined in \fref{Theorem}{thm:regularised_PV_and_M_inv}.
	For any $p\in \mathring{H}^1 $, if we write $C_\varphi \vcentcolon= \int \lvert y \rvert \varphi (y) dy$ then
	$
		\norm{ DE(p) - DE_\varepsilon (p)}{H^{-1}} \leqslant C_\varphi {\lvert \mathbb{T}^3 \rvert}^{1/2} \varepsilon / 2.
	$
\end{lemma}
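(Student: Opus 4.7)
The plan is to write $DE(p) - DE_\varepsilon(p)$ as a single integral using the explicit formulae from \fref{Lemma}{lemma:Gateaux_diff_energy} and \fref{Theorem}{thm:regularised_PV_and_M_inv}: the two gradient terms cancel, the two $\PV$ terms cancel, and what remains is a term of the form $\frac{1}{2}\int_{\mathbb{T}^3} \bigl[\min_0(M - \partial_3 p) - \min_\varepsilon(M - \partial_3 p)\bigr](\partial_3 \phi)$. The strategy is then to apply Cauchy--Schwarz, bound $\norm{\partial_3 \phi}{L^2} \leqslant \norm{\phi}{\mathring{H}^1}$, and reduce the problem to a pointwise estimate on $\min_0 - \min_\varepsilon$.

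Next I would obtain the key pointwise estimate $\norms{\min_\varepsilon(s) - \min_0(s)}{L^\infty(\mathbb{R})} \leqslant C_\varphi \varepsilon$ using nothing more than the fact that $\min_0$ is $1$-Lipschitz. Writing
\begin{equation*}
    {\min}_\varepsilon(s) - {\min}_0(s)
    = \int_{\mathbb{R}} \bigl[{\min}_0(s-y) - {\min}_0(s)\bigr]\, \varphi_\varepsilon(y)\, dy,
\end{equation*}
the Lipschitz bound gives $\bigl|{\min}_\varepsilon(s) - {\min}_0(s)\bigr| \leqslant \int |y| \varphi_\varepsilon(y)\, dy$, and a change of variable $y = \varepsilon z$ turns this into $\varepsilon C_\varphi$.

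Converting this $L^\infty$ bound to an $L^2$ bound on the torus is immediate by multiplying by $|\mathbb{T}^3|^{1/2}$, which gives
\begin{equation*}
    \norms{ {\min}_0(M - \partial_3 p) - {\min}_\varepsilon(M - \partial_3 p) }{L^2(\mathbb{T}^3)} \leqslant C_\varphi |\mathbb{T}^3|^{1/2} \varepsilon.
\end{equation*}
Combining this with the Cauchy--Schwarz step above yields the factor of $1/2$ in front and produces the claimed estimate. There is no real obstacle here; the argument is essentially bookkeeping. The only point requiring care is making sure the gradient and $\PV$ terms in $DE$ and $DE_\varepsilon$ agree identically so that the difference truly collapses to the $\min_0 - \min_\varepsilon$ term, which follows directly since $(f_\varepsilon)' = f_0' \ast \varphi_\varepsilon = \min_\varepsilon$ as was already observed in the proof of \fref{Theorem}{thm:regularised_PV_and_M_inv}.
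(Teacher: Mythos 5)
Your proposal is correct and follows essentially the same route as the paper: the gradient and $\PV$ terms cancel, what remains is $\frac{1}{2}\int(\min_0 - \min_\varepsilon)(M-\partial_3 p)\,\partial_3\phi$, and one concludes via Cauchy--Schwarz together with the pointwise bound $\lvert \min_0 - \min_\varepsilon \rvert \leqslant C_\varphi\varepsilon$. The only cosmetic difference is that you re-derive this pointwise bound inline, whereas the paper simply invokes \fref{Lemma}{lemma:unif_conv_moll_Lip_func}, which is word-for-word the same argument (Lipschitz constant $1$ of $\min_0$ plus the change of variable $y = \varepsilon z$).
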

\begin{proof}
	As in \fref{Theorem}{thm:regularised_PV_and_M_inv} we write $f_0 (s) \vcentcolon= \frac{1}{2} \min {(s,\,0)}^2$ and $f_\varepsilon \vcentcolon= f_0 \ast \varepsilon$,
	noting that $f_0 ' = \min ( \,\cdot\, ,\, 0 )$.
	Subtracting the expression for $DE_\varepsilon$ recorded in \eqref{eq:regularised_PV_and_M_inv} from the expression for $DE$ recorded in \eqref{eq:Gateaux_diff_en} then tells us that,
	for any $\phi\in \mathring{H}^1 $,
	$
		( DE(p) - DE_\varepsilon (p)) \phi
		= \frac{1}{2} \int_{\mathbb{T}^3} ( f_0' - f_\varepsilon ')(M - \partial_3 p) (\partial_3 \phi).
	$
	Since $\text{Lip}(f_0') = 1$ we then deduce from H\"{o}lder's inequality and \fref{Lemma}{lemma:unif_conv_moll_Lip_func} that
	$
		\lvert ( DE(p) - DE_\varepsilon (p)) \phi \rvert
	 	\leqslant C_\varphi \varepsilon { \lvert \mathbb{T}^3 \rvert }^{1/2} \norm{\phi}{ \mathring{H}^1 }/2,
	$
	from which the claim follows.
\end{proof}

Finally we conclude this section by establishing that minimizers of the smoothed out energy converge to the minimizer of the true energy.

\begin{theorem}[Minimizers of the $\varepsilon$-energy are $\varepsilon$-close to the minimizer of the true energy]
\label{thm:min_of_reg_en_conv_to_true_min}
	Let $M\in L^2$, $\PV\in H^{-1}$, and let $E$ be the energy introduced in \fref{Definition}{def:energy}.
	Let $\varphi$ be a \hyperref[def:mollifier]{standard mollifier} and let $E_\varepsilon$ be as defined in \fref{Theorem}{thm:regularised_PV_and_M_inv}.
	Denote
	$
		p^* \vcentcolon= \argmin_{ \mathring{H}^1 } E
	$
	and
	$
		p^*_{\varepsilon} = \argmin_{ \mathring{H}^1 } E_\varepsilon
	$
	and let
	$
		C_\varphi \vcentcolon= \int \lvert y \rvert \varphi (y) dy.
	$
	Then
	$
		\norm{p^* - p^*_{\varepsilon}}{ \mathring{H}^1 } \leqslant \sqrt{2} C_\varphi { \lvert \mathbb{T}^3 \rvert }^{1/2} \varepsilon.
	$
\end{theorem}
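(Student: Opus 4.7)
The plan is to exploit the fact, established in \fref{Lemma}{lemma:en_is_norm_near_min}, that strong convexity makes $E$ behave essentially like a norm about its minimizer $p^*$, and to bound the resulting energy gap by the distance $\norm{DE - DE_\varepsilon}{H^{-1}}$ provided by \fref{Lemma}{lemma:grad_reg_epsilon_close_true_grad}.

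First I would apply \fref{Lemma}{lemma:en_is_norm_near_min} with $p = p^*_\varepsilon$ to obtain
\begin{equation*}
	\norm{p^*_\varepsilon - p^*}{\mathring{H}^1}^2 \leqslant 4 \bigl( E(p^*_\varepsilon) - E(p^*) \bigr).
\end{equation*}
Next, since $p^*_\varepsilon$ minimizes $E_\varepsilon$, we have $E_\varepsilon(p^*_\varepsilon) \leqslant E_\varepsilon(p^*)$, and inserting this through a telescoping step yields
\begin{equation*}
	E(p^*_\varepsilon) - E(p^*)
	\leqslant \bigl[ E(p^*_\varepsilon) - E_\varepsilon(p^*_\varepsilon) \bigr] + \bigl[ E_\varepsilon(p^*) - E(p^*) \bigr]
	= (E - E_\varepsilon)(p^*_\varepsilon) - (E - E_\varepsilon)(p^*).
\end{equation*}

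Then I would use the fundamental theorem of calculus along the line segment $p_t \vcentcolon= p^* + t(p^*_\varepsilon - p^*)$, noting that $E$ and $E_\varepsilon$ are both Fr\'{e}chet differentiable on $\mathring{H}^1$ (see \fref{Corollary}{cor:Frechet_diff_en} and \fref{Theorem}{thm:regularised_PV_and_M_inv}), to write
\begin{equation*}
	(E - E_\varepsilon)(p^*_\varepsilon) - (E - E_\varepsilon)(p^*)
	= \int_0^1 \bigl\langle DE(p_t) - DE_\varepsilon(p_t),\, p^*_\varepsilon - p^* \bigr\rangle \, dt.
\end{equation*}
\fref{Lemma}{lemma:grad_reg_epsilon_close_true_grad} furnishes the uniform bound $\norm{DE(p_t) - DE_\varepsilon(p_t)}{H^{-1}} \leqslant C_\varphi {\lvert \mathbb{T}^3 \rvert}^{1/2} \varepsilon / 2$, so the duality pairing gives
\begin{equation*}
	E(p^*_\varepsilon) - E(p^*) \leqslant \frac{C_\varphi {\lvert \mathbb{T}^3 \rvert}^{1/2} \varepsilon}{2} \norm{p^*_\varepsilon - p^*}{\mathring{H}^1}.
\end{equation*}

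Combining with the first step produces $\norm{p^*_\varepsilon - p^*}{\mathring{H}^1}^2 \leqslant 2\, C_\varphi {\lvert \mathbb{T}^3 \rvert}^{1/2} \varepsilon \norm{p^*_\varepsilon - p^*}{\mathring{H}^1}$, and dividing through by $\norm{p^*_\varepsilon - p^*}{\mathring{H}^1}$ (with the trivial case treated separately) yields the desired bound. None of the steps is a real obstacle — the main point is the convenient algebraic identity turning $E(p^*_\varepsilon) - E(p^*)$ into a difference of $E - E_\varepsilon$ evaluated at two points, which decouples the minimizing property of $p^*_\varepsilon$ (against $E_\varepsilon$) from the norm-like control of $E$ about $p^*$, and the uniform closeness of the two derivatives already proved.
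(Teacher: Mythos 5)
Your route is genuinely different from the paper's, and it is correct in structure but not quite in the constant. The paper applies strong convexity \emph{twice}: first via \fref{Lemma}{lemma:en_is_norm_near_min} to bound $\norm{p^*_\varepsilon - p^*}{\mathring{H}^1}^2$ by the energy gap $E(p^*_\varepsilon) - E(p^*)$, and then via \fref{Corollary}{cor:strong_convex_is_norm_near_minimizer} to bound that gap by $\norm{DE(p^*_\varepsilon)}{H^{-1}}^2$, which is estimated directly from $DE_\varepsilon(p^*_\varepsilon)=0$ and \fref{Lemma}{lemma:grad_reg_epsilon_close_true_grad}. You replace the second use of strong convexity by the telescoping step (exploiting minimality of $p^*_\varepsilon$ for $E_\varepsilon$) and the fundamental theorem of calculus along the segment joining $p^*$ to $p^*_\varepsilon$, again controlled by \fref{Lemma}{lemma:grad_reg_epsilon_close_true_grad}. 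This is clean and more elementary, as it avoids \fref{Corollary}{cor:strong_convex_is_norm_near_minimizer} entirely and makes transparent that only the uniform $H^{-1}$-proximity of $DE$ and $DE_\varepsilon$ is used.

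However, your final step does not deliver the stated constant: from
\begin{equation*}
	\norm{p^*_\varepsilon - p^*}{\mathring{H}^1}^2 \leqslant 2\, C_\varphi {\lvert \mathbb{T}^3 \rvert}^{1/2} \varepsilon \, \norm{p^*_\varepsilon - p^*}{\mathring{H}^1}
\end{equation*}
you get $\norm{p^*_\varepsilon - p^*}{\mathring{H}^1} \leqslant 2\, C_\varphi {\lvert \mathbb{T}^3 \rvert}^{1/2}\varepsilon$, which is weaker than the claimed $\sqrt{2}\, C_\varphi {\lvert \mathbb{T}^3 \rvert}^{1/2}\varepsilon$. The slack comes from using strong convexity only on $E$: your telescoped quantity $(E - E_\varepsilon)(p^*_\varepsilon) - (E - E_\varepsilon)(p^*)$ is bounded below not just by $E(p^*_\varepsilon) - E(p^*) \geqslant \tfrac{1}{4}\norm{p^*_\varepsilon - p^*}{\mathring{H}^1}^2$ but, since $E_\varepsilon$ is likewise $\tfrac12$-strongly convex with $DE_\varepsilon(p^*_\varepsilon)=0$, also by the additional term $E_\varepsilon(p^*) - E_\varepsilon(p^*_\varepsilon) \geqslant \tfrac{1}{4}\norm{p^*_\varepsilon - p^*}{\mathring{H}^1}^2$. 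Adding the two lower bounds gives $\tfrac{1}{2}\norm{p^*_\varepsilon - p^*}{\mathring{H}^1}^2 \leqslant (E - E_\varepsilon)(p^*_\varepsilon) - (E - E_\varepsilon)(p^*)$, and your fundamental-theorem-of-calculus estimate then yields $\norm{p^*_\varepsilon - p^*}{\mathring{H}^1} \leqslant C_\varphi {\lvert \mathbb{T}^3 \rvert}^{1/2}\varepsilon$, which in fact improves on the stated constant.
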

\begin{proof}
	Since $DE(p^*) = 0$ the $ \frac{1}{2} $--strong convexity of $E$, proved in \fref{Proposition}{prop:prop_en}, in the form of \eqref{eq:equiv_charac_strg_conv_2}
	combined with \fref{Corollary}{cor:strong_convex_is_norm_near_minimizer} tells us that
	\begin{equation*}
		\norm{ p^* - p_\varepsilon^* }{ \mathring{H}^1 }^2
		\leqslant 4 \left[ E ( p^*\varepsilon ) - E( p^* ) \right]
		\leqslant \norm{ DE ( P_\varepsilon^* ) }{H^{-1}}^2.
	\end{equation*}
	We may then use \fref{Lemma}{lemma:grad_reg_epsilon_close_true_grad} to conclude since $\nabla_{ \mathring{H}^1 } E_\varepsilon \left( p^*_{\varepsilon} \right) = 0$ and hence,
	using the inequality above,
	\begin{align*}
		\norm{p^* - p^*_{\varepsilon}}{ \mathring{H}^1 }^2
		\leqslant 8 \left(
			\norm{ D E_\varepsilon \left( p^*_{\varepsilon} \right) }{ H^{-1} }^2
			+ \norm{ DE_\varepsilon \left( p^*_{\varepsilon} \right) - D E \left( p^*_{\varepsilon} \right) }{ H^{-1} }^2
		\right)
		\leqslant 2 C_\varphi^2 \lvert \mathbb{T}^3 \rvert \varepsilon^2,
	\end{align*}
	as claimed.
\end{proof}

\appendix

\section{Tools from convex analysis}
\label{sec:tools_convex_ana}

In this section we record various results from convex analysis that are of use to us.
This begins with results on various characterisations of strong convexity
and concludes with a lemma relating convexity to weak lower semi-continuity of functionals.

First we recall a few first-order characterisations of strong convexity.

\begin{lemma}[Equivalent characterisations of strong convexity]
\label{lemma:equiv_charac_strg_conv}
	Let $ \left( H,\, \langle \,\cdot\,,\,\cdot\, \rangle  \right)$ be a Hilbert space and let $f: H\to \mathbb{R}$ be Fr\'{e}chet differentiable.
	For any $\mu > 0$ the following are equivalent, where each inequality holds for every $x,\,y\in H$ and $\theta\in [0,\,1]$.
	\begin{align}
		&f(\theta x + (1-\theta) y) \leqslant \theta f(x) + (1-\theta) f(y) - \theta(1-\theta) \frac{\mu}{2} \norm{x-y}{}^2	\label{eq:equiv_charac_strg_conv_1}\\
		&f(y) \geqslant f(x) + \langle Df(x),\, y-x \rangle + \frac{\mu}{2} \norm{x-y}{}^2					\label{eq:equiv_charac_strg_conv_2}\\
		&\langle Df(x) - Df(y) ,\, x-y \rangle \geqslant \mu \norm{x-y}{}^2							\label{eq:equiv_charac_strg_conv_3}
	\end{align}
	If any of these inequalities hold recall that we say that $f$ is \emph{$\mu$-strongly convex}.
\end{lemma}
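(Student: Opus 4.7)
The plan is to establish the cycle $\eqref{eq:equiv_charac_strg_conv_1} \Rightarrow \eqref{eq:equiv_charac_strg_conv_2} \Rightarrow \eqref{eq:equiv_charac_strg_conv_3} \Rightarrow \eqref{eq:equiv_charac_strg_conv_1}$. Throughout, the key tool is to slice $f$ along a line segment and study a one-variable auxiliary function.

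For $\eqref{eq:equiv_charac_strg_conv_1} \Rightarrow \eqref{eq:equiv_charac_strg_conv_2}$, I would rewrite \eqref{eq:equiv_charac_strg_conv_1} as
\begin{equation*}
	\frac{f(x + \theta(y-x)) - f(x)}{\theta} \leqslant f(y) - f(x) - (1-\theta) \frac{\mu}{2} \norm{y-x}{}^2
\end{equation*}
for $\theta \in (0,1]$. By Fr\'{e}chet differentiability the left-hand side converges to $\langle Df(x),\, y-x \rangle$ as $\theta \downarrow 0$, which upon rearrangement gives \eqref{eq:equiv_charac_strg_conv_2}.

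For $\eqref{eq:equiv_charac_strg_conv_2} \Rightarrow \eqref{eq:equiv_charac_strg_conv_3}$, apply \eqref{eq:equiv_charac_strg_conv_2} once with the pair $(x,y)$ and once with the roles swapped; the function values cancel upon summation and one is left with
\begin{equation*}
	0 \geqslant \langle Df(x) - Df(y),\, y-x \rangle + \mu \norm{x-y}{}^2,
\end{equation*}
which is exactly \eqref{eq:equiv_charac_strg_conv_3} after sign change.

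The main obstacle is $\eqref{eq:equiv_charac_strg_conv_3} \Rightarrow \eqref{eq:equiv_charac_strg_conv_1}$: one must recover a genuine two-point convexity inequality from a pointwise monotonicity estimate on $Df$. The plan is to fix $x,y \in H$ and set $\phi(t) \vcentcolon= f((1-t)x + t y)$ on $[0,1]$, which is $C^1$ with $\phi'(t) = \langle Df((1-t)x+ty),\, y-x \rangle$. Applying \eqref{eq:equiv_charac_strg_conv_3} to the pair $((1-t)x+ty,\,(1-s)x+sy)$ and dividing by $t-s > 0$ yields the strong monotonicity
\begin{equation*}
	\phi'(t) - \phi'(s) \geqslant \mu (t-s) \norm{y-x}{}^2 \quad \text{for } 0 \leqslant s < t \leqslant 1.
\end{equation*}
The trick is then to introduce $\psi(t) \vcentcolon= \phi(t) - \frac{\mu}{2} t^2 \norm{y-x}{}^2$, whose derivative is non-decreasing by the above estimate; hence $\psi$ is a (classically) convex function on $[0,1]$, so $\psi(\theta) \leqslant (1-\theta)\psi(0) + \theta \psi(1)$ for every $\theta \in [0,1]$. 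Unwinding the definitions and using $\theta^2 - \theta = -\theta(1-\theta)$ delivers \eqref{eq:equiv_charac_strg_conv_1}. The closure of this cycle completes the proof.
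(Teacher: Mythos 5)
Your argument is correct, and it is genuinely different from the paper's treatment: the paper gives no proof at all, simply citing Definition 2.1.2 and Theorem 2.1.9 of Nesterov's textbook. What you supply is a self-contained proof via the cycle $\eqref{eq:equiv_charac_strg_conv_1} \Rightarrow \eqref{eq:equiv_charac_strg_conv_2} \Rightarrow \eqref{eq:equiv_charac_strg_conv_3} \Rightarrow \eqref{eq:equiv_charac_strg_conv_1}$. The first two implications are the standard difference-quotient and symmetrization arguments, and the closure of the cycle correctly reduces the $\eqref{eq:equiv_charac_strg_conv_3} \Rightarrow \eqref{eq:equiv_charac_strg_conv_1}$ step to one dimension: you slice along the segment, translate the monotonicity of the gradient into strong monotonicity of $\phi'$, subtract the explicit parabola so that $\psi' = \phi' - \mu t \norm{y-x}{}^2$ is non-decreasing, and invoke scalar convexity. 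One small overstatement: Fr\'{e}chet differentiability of $f$ guarantees only that $\phi$ is differentiable with $\phi'(t) = \langle Df((1-t)x+ty),\,y-x\rangle$, not that $\phi$ is $C^1$; but you never actually use continuity of $\phi'$ --- the implication ``non-decreasing derivative $\Rightarrow$ convex'' needs only differentiability (via the mean value theorem applied to three collinear points) --- so the proof goes through unchanged. What your version buys over the paper's is a complete, elementary argument in the appendix instead of an external reference; what it costs is length, which is presumably why the authors chose to cite Nesterov instead.
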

\begin{proof}
	See Definition 2.1.2 and Theorem 2.1.9 in \cite{nesterov}.
\end{proof}

Similarly we now recall some second-order characterisations of strong convexity.

\begin{lemma}[Second-order characterisation of strong convexity]
\label{lemma:second_order_charac_strg_conv}
	Let $ \left( H,\, \langle \,\cdot\,,\,\cdot\, \rangle  \right)$ be a Hilbert space and let $f: H\to \mathbb{R}$ be twice continuously Fr\'{e}chet differentiable.
	$f$ is $\mu$-strongly convex if and only if $D^2 f \geqslant 0$, in the sense that $D^2 f (x) (\xi,\,\xi) \geqslant 0$ for every $x,\,\xi\in H$.
\end{lemma}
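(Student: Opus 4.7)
The plan is to pass through the first-order characterisation \eqref{eq:equiv_charac_strg_conv_2} from \fref{Lemma}{lemma:equiv_charac_strg_conv}, which is the natural bridge between a pointwise bound on $D^2 f$ and functional strong convexity. (We read the statement with the $\mu$-quantitative convention, meaning $D^2 f(x)(\xi,\,\xi) \geqslant \mu \norm{\xi}{}^2$ for every $x,\,\xi \in H$.) In both directions the argument reduces to an application of Taylor's theorem to the $C^2$ scalar function $g(t) \vcentcolon= f(x + t(y-x))$, whose first and second derivatives are
\[
g'(t) = \langle Df(x + t(y-x)),\, y-x \rangle,
\qquad
g''(t) = D^2 f(x + t(y-x))(y-x,\, y-x),
\]
by the Fr\'echet chain rule.

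For the ``Hessian bound implies strong convexity'' direction, I would write Taylor's formula with integral remainder,
\[
f(y) = f(x) + \langle Df(x),\, y-x \rangle + \int_0^1 (1-t)\, g''(t)\, dt,
\]
and apply the hypothesis $g''(t) \geqslant \mu \norm{y-x}{}^2$ inside the integral. Since $\int_0^1 (1-t)\, dt = \tfrac{1}{2}$, this produces exactly \eqref{eq:equiv_charac_strg_conv_2}, and \fref{Lemma}{lemma:equiv_charac_strg_conv} then yields $\mu$-strong convexity of $f$.

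For the converse, I would fix $x,\,\xi \in H$, apply \eqref{eq:equiv_charac_strg_conv_2} at $y = x + t\xi$ for small $t > 0$ to obtain
\[
f(x + t\xi) - f(x) - t \langle Df(x),\, \xi \rangle \geqslant \frac{\mu t^2}{2} \norm{\xi}{}^2,
\]
and compare this lower bound with the standard second-order Taylor expansion
\[
f(x + t\xi) = f(x) + t \langle Df(x),\, \xi \rangle + \frac{t^2}{2} D^2 f(x)(\xi,\, \xi) + o(t^2),
\]
which is available because $D^2 f$ is continuous at $x$. Subtracting, dividing by $t^2/2$, and sending $t \downarrow 0$ yields $D^2 f(x)(\xi,\,\xi) \geqslant \mu \norm{\xi}{}^2$, which completes the equivalence.

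There is no substantive obstacle here: the continuity of $D^2 f$ is exactly what licenses both the integral remainder in the first direction and the passage to the limit in the second. The only mild care required is to interpret the symmetric bilinear form $D^2 f(x)$ consistently in the two expansions and to check that the $o(t^2)$ remainder in the second-order expansion is genuinely uniform on small balls, which follows from the local uniform continuity of $D^2 f$.
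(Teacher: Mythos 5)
Your proof is correct, and it fills in a detail the paper does not: the paper's own ``proof'' of \fref{Lemma}{lemma:second_order_charac_strg_conv} is simply a citation to Theorem 2.1.11 of \cite{nesterov}, so you are supplying the standard argument that the reference contains rather than following a proof the paper actually wrote out. Two remarks. First, you were right to reread the statement with the $\mu$-quantitative Hessian bound $D^2 f(x)(\xi,\,\xi)\geqslant\mu\norm{\xi}{}^2$: as printed, the lemma says ``$D^2 f\geqslant 0$,'' which would only characterise ordinary convexity and would make the ``$\mu$-strongly convex'' conclusion false; that is evidently a typographical slip, and your interpretation is what the cited theorem in \cite{nesterov} asserts and what the paper uses in \fref{Lemma}{lemma:strg_cnvx_e}. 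Second, your route through \eqref{eq:equiv_charac_strg_conv_2} is exactly the natural one: Taylor with integral remainder applied to $g(t)=f(x+t(y-x))$ gives the forward implication because $\int_0^1(1-t)\,dt=\tfrac12$, and the converse follows by comparing the first-order lower bound at $y=x+t\xi$ with the second-order expansion and letting $t\downarrow 0$, which is licensed by continuity of $D^2 f$. No gap; the only point worth keeping in mind, which you flagged, is that the $o(t^2)$ remainder is controlled by local uniform continuity of $D^2 f$ near $x$, which the ``twice continuously Fr\'echet differentiable'' hypothesis provides.
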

\begin{proof}
	See Theorem 2.1.11 in \cite{nesterov}.
\end{proof}

We now record a simple but very useful consequence of strong convexity which tells us that the derivatives of strongly convex functions essentially act as norms near their minimizers.

\begin{cor}[Derivative of a strongly convex function is essentially a norm near its minimizer]
\label{cor:strong_convex_is_norm_near_minimizer}
	Let $ \left( H,\, \langle \,\cdot\,,\,\cdot\, \rangle \right)$ be a Hilbert space with dual $H^*$ and
	let $f:H\to\mathbb{R}$ be a Fr\'{e}chet differentiable and $\mu$-strongly convex.
	Then
	$
		f(x) - f(y) \leqslant \frac{1}{2\mu} \norm{ Df(x) }{H^*}^2
	$
	for every $x,\,y\in H$.
\end{cor}
\begin{proof}
	We write $\nabla_H f(x)$ for the Riesz representative of $D f(x)$ and note that
	$y^* = x - \frac{1}{2\mu} \nabla_H f(x)$ is the minimum of the right-hand side of the characterisation \eqref{eq:equiv_charac_strg_conv_2} of strong convexity with respect to $y$.
	Substituting $y = y^*$ into the right-hand side of \eqref{eq:equiv_charac_strg_conv_2} then tells us that, for every $x,\,y\in H$,
	\begin{equation*}
		f(y)
		\geqslant f(x) - \frac{1}{\mu} Df(x) \nabla_H f(x) + \frac{1}{2\mu} \norm{\nabla_H f(x)}{}^2
		= f(x) - \frac{1}{2\mu} \norm{Df(x)}{H^*}^2
	\end{equation*}
	as desired since $\norm{\nabla_H f(x)}{} = \norm{Df(x)}{H^*}$.
\end{proof}

Below we now record a result used for the existence and uniqueness result of \fref{Theorem}{thm:exist_and_unique}, i.e. one of the ingredients of the Direct Methods of the Calculus of Variations.
	
\begin{lemma}[Convex energy density implies weak lower semi-continuity]
\label{lemma:cvx_en_implies_wk_lsc}
	Consider $f:\mathbb{T}^3 \times \mathbb{R}^3 \to \mathbb{R}$ which satisfies the following conditions.
	\begin{enumerate}
		\item	For every $x\in\mathbb{T}^3$, $f(x, \,\cdot\,)$ is convex and continuously differentiable.
		\item	For every $u\in L^2 (\mathbb{T}^3,\, \mathbb{R}^3)$,
			\begin{equation*}
				x \mapsto f(x,\, u(x)) \in L^1 (\mathbb{T}^3) \text{ and }
				x \mapsto (\nabla_u f)(x,\, u(x)) \in L^2 (\mathbb{T}^3).
			\end{equation*}
	\end{enumerate}
	The functional $\mathcal{F}: \mathring{H}^1 \to \mathbb{R}$ defined by
	$
		\mathcal{F} (p) \vcentcolon= \int_{\mathbb{T}^3} f(x,\, \nabla p (x) ) dx
	$
	for every $p\in\mathring{H}^1$ is weakly lower semi-continuous in $\mathring{H}^1$.
\end{lemma}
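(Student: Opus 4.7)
The plan is to use the standard convexity argument: pointwise convexity gives a tangent-line (supporting hyperplane) inequality, and integrating this inequality against a weakly convergent sequence produces the desired lower semi-continuity through the weak-times-strong pairing in $L^2$.

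More concretely, suppose that $p_n \rightharpoonup p$ in $\mathring{H}^1$, so that $\nabla p_n \rightharpoonup \nabla p$ in $L^2(\mathbb{T}^3, \mathbb{R}^3)$. The first step is to invoke the convexity and $C^1$-smoothness of $f(x, \cdot)$ from hypothesis (1) to obtain, for almost every $x \in \mathbb{T}^3$ and every $n$, the tangent-line inequality
\begin{equation*}
    f(x, \nabla p_n(x)) \geqslant f(x, \nabla p(x)) + (\nabla_u f)(x, \nabla p(x)) \cdot \bigl(\nabla p_n(x) - \nabla p(x)\bigr).
\end{equation*}
Hypothesis (2) ensures that every term here is integrable: $f(x, \nabla p(x))$ and $f(x, \nabla p_n(x))$ lie in $L^1(\mathbb{T}^3)$, and $(\nabla_u f)(x, \nabla p(x))$ lies in $L^2(\mathbb{T}^3, \mathbb{R}^3)$, so its pairing with $\nabla p_n(x) - \nabla p(x) \in L^2$ is integrable. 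Integrating over $\mathbb{T}^3$ yields
\begin{equation*}
    \mathcal{F}(p_n) \geqslant \mathcal{F}(p) + \int_{\mathbb{T}^3} (\nabla_u f)(x, \nabla p(x)) \cdot \bigl(\nabla p_n(x) - \nabla p(x)\bigr)\, dx.
\end{equation*}

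The final step is to pass to the liminf as $n \to \infty$. Since $(\nabla_u f)(\cdot, \nabla p(\cdot)) \in L^2(\mathbb{T}^3, \mathbb{R}^3)$ acts as a fixed test function and $\nabla p_n - \nabla p \rightharpoonup 0$ in $L^2$, the integral on the right-hand side vanishes in the limit. Taking liminf then gives $\liminf_{n \to \infty} \mathcal{F}(p_n) \geqslant \mathcal{F}(p)$, which is precisely weak lower semi-continuity on $\mathring{H}^1$.

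There is no real obstacle here, as the argument is entirely classical; the only subtle point is bookkeeping to verify that the integrability hypotheses in (2) are exactly what makes the tangent-line inequality both finite and compatible with weak $L^2$ convergence of gradients. In particular, hypothesis (2) is formulated precisely so that $(\nabla_u f)(x, \nabla p(x))$ is an admissible $L^2$ tester against the weakly convergent sequence $\nabla p_n - \nabla p$, which is what makes the linear term vanish rather than merely remain bounded.
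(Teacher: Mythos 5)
Your proof is correct and follows essentially the same route as the paper's: both use the pointwise supporting-hyperplane inequality from the convexity and $C^1$-regularity of $f(x,\cdot)$, integrate it, and then let the linear correction term vanish via weak $L^2$ convergence of $\nabla p_n - \nabla p$ against the fixed $L^2$ function $(\nabla_u f)(\cdot,\nabla p(\cdot))$. The only difference is cosmetic (your tangent-line inequality is written with the roles of the two sides rearranged), and your additional remarks on why hypothesis (2) is exactly the right integrability condition are accurate.
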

\begin{proof}
	Let $(p_n)$ be a sequence in $\mathring{H}^1$ weakly convergent to $p\in\mathring{H}^1$.
	Since $f(x, \,\cdot\,)$ is convex and continuously differentiable we have that
	\begin{equation*}
		f(x,\, \nabla p) \leqslant f(x,\, \nabla p_n) + (\nabla_u f)(x,\,\nabla p) \cdot (\nabla p - \nabla p_n).
	\end{equation*}
	In particular, since 
	$
		(\nabla_u f)(x,\,\nabla p) \in L^2
	$
	and
	$
		\nabla p - \nabla p_n \rightharpoonup 0 \text{ in } L^2,
	$
	we deduce that indeed
	\begin{equation*}
		\mathcal{F} (p) = \int_{\mathbb{T}^3} f(x,\,\nabla p)
		\leqslant \liminf \int_{\mathbb{T}^3} f(x,\, \nabla p_n) + 0
		= \liminf \mathcal{F}(p_n),
	\end{equation*}
	as desired.
\end{proof}


\section{Tools from regularity theory}
\label{sec:tools_reg_th}

In this section we record tools from regularity theory that are of use to us.
First we discuss elementary properties of finite differences (used to obtain the $H^2$ regularity of solutions in \fref{Lemma}{lemma:H_2_reg}).
The bulk of this section is then devoted to the discussion of the de Giorgi regularity theory, and specifically its formulation in terms of Campanato functions.
Finally this section concludes with a simple result pertaining to approximations using mollifiers.

First we fix notation and recall the definition of finite differences.

\begin{definition}[Finite differences]
\label{def:fin_diff}
	We define, for $f \in L^1_\text{loc} (\mathbb{T}^3)$, $x,\,h\in\mathbb{T}^3$, and $i = 1,\,\dots,\,n$ the \emph{finite difference}
	$
		\Delta_h^i f(x) \vcentcolon= f(x + he_i) - f(x).
	$
\end{definition}

We now record elementary properties of finite differences.

\begin{lemma}[Properties of finite differences]
\label{lemma:fin_diff}
	For \hyperref[def:fin_diff]{finite differences} the following hold.
	\begin{enumerate}
		\item	Adjoint: $ \int_{\mathbb{T}^3} (\Delta_h^i f) g = \int_{\mathbb{T}^3} f ( \Delta_{-h}^i g)$.
		\item	Commutation: $ \partial_j $ and $\Delta_h^i $ commute.
		\item	Chain Rule: if $f$ is absolutely continuous then the following identity holds:
			$
				\Delta_h^i (f\circ g) = (\Delta_h^i g) \int_0^1 f' \left( ( \id + \theta\Delta_h^i ) g \right) d\theta.
			$
		\item	Characterisation of $W^{1,\,p}$, necessity: If $f\in W^{1,\,p} (\mathbb{T}^3)$ for $p\in [1,\,\infty)$ then we have that
			$
				\norm{\Delta_h^i f}{L^p} \leqslant C_d \lvert h \rvert \, \norm{\nabla f}{L^p}.
			$
		\item	Characterisation of $W^{1,\,p}$, sufficiency: If $u\in L^p (\mathbb{T}^3)$, $p\in (1,\,\infty)$, such that
			$
				\norm{\Delta_h^i u}{L^p} \leqslant M \lvert h \rvert
			$
			for some constant $M > 0$ then
			$u\in W^{1,\,p} (\mathbb{T}^3)$, we have the estimate
			$ \norm{\nabla u}{L^p} \leqslant M$, and
			$\frac{1}{h} \Delta_h^i u \rightharpoonup \partial_i u$ in $L^p$ as $h\to 0$.
		\item	$H^{-1}$ estimate: If $u\in L^2$ then $\Delta_h^i f \in H^{-1}$ and
			$
				\norm{\Delta_h^i f}{H^{-1}} \leqslant \lvert h \rvert \, \norm{f}{L^2}.
			$
	\end{enumerate}
\end{lemma}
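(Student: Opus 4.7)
The plan is to prove each property in turn, using earlier properties as we accumulate them, since several of the later claims bootstrap on the first ones.

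For the adjoint identity, I would perform the change of variables $x \mapsto x - he_i$ in one of the two integrals, which is an isometry of $\mathbb{T}^3$. The commutation of $\partial_j$ with $\Delta_h^i$ follows directly from the definition once we note that translation commutes with differentiation; I would state this as a one-line consequence. The chain rule is proved by writing
\begin{equation*}
\Delta_h^i(f \circ g)(x) = f(g(x+he_i)) - f(g(x)) = \int_0^1 \frac{d}{d\theta} f\bigl(g(x) + \theta(g(x+he_i) - g(x))\bigr)\,d\theta,
\end{equation*}
which is valid for absolutely continuous $f$ and produces the desired integral representation involving $(\id + \theta\Delta_h^i)g$ after applying the chain rule pointwise and factoring out $\Delta_h^i g$.

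For the necessity part of the $W^{1,p}$ characterization, I would first treat $f \in C^\infty(\mathbb{T}^3)$ by the fundamental theorem of calculus, writing $\Delta_h^i f(x) = \int_0^1 h\, \partial_i f(x + \theta h e_i)\, d\theta$, and then apply Minkowski's integral inequality in $L^p$ to conclude $\|\Delta_h^i f\|_{L^p} \leq |h|\,\|\partial_i f\|_{L^p}$. The case of general $f \in W^{1,p}$ follows by density (using that $C^\infty$ is dense in $W^{1,p}(\mathbb{T}^3)$ for $p < \infty$). The $H^{-1}$ estimate in property 6 is an immediate consequence: for any test function $\phi \in \mathring{H}^1$, the adjoint identity gives $\langle \Delta_h^i f, \phi\rangle = \int f\, \Delta_{-h}^i \phi$, and then Cauchy--Schwarz together with the $L^2$ case of the necessity bound on $\phi$ yields the result.

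The main obstacle is the sufficiency direction of the $W^{1,p}$ characterization, which requires the reflexivity of $L^p$ for $p \in (1,\infty)$. The plan is: given the uniform bound $\|\Delta_h^i u / h\|_{L^p} \leq M$, extract along any null sequence $h_k \to 0$ a weakly convergent subsequence $\Delta_{h_k}^i u / h_k \rightharpoonup v$ in $L^p$ (by Banach--Alaoglu, using reflexivity for $p > 1$). To identify $v$ as the weak partial derivative $\partial_i u$, I would test against a smooth $\varphi \in C^\infty(\mathbb{T}^3)$: using the adjoint identity,
\begin{equation*}
\int \frac{\Delta_{h_k}^i u}{h_k}\, \varphi = \int u\, \frac{\Delta_{-h_k}^i \varphi}{h_k} \longrightarrow -\int u\, \partial_i \varphi,
\end{equation*}
where the limit on the right uses the uniform $L^{p'}$ convergence of $\Delta_{-h_k}^i \varphi / h_k$ to $-\partial_i \varphi$ (which follows from $\varphi$ being smooth). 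This identifies $v = \partial_i u$ in the distributional sense, so $u \in W^{1,p}$ with $\|\partial_i u\|_{L^p} \leq M$ by lower semicontinuity of the norm under weak convergence. Since the weak limit is unique, the whole family $\Delta_h^i u / h$ converges weakly to $\partial_i u$ as $h \to 0$.
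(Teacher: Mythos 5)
Your proposal is correct and takes essentially the same approach as the paper: the change-of-variables argument for the adjoint, the translation-commutes-with-differentiation observation, and the fundamental-theorem-of-calculus computation for the chain rule all match the paper's reasoning exactly. The only differences are cosmetic: for items 4 and 5 the paper simply cites Giaquinta--Martinazzi while you supply the standard mollification-plus-weak-compactness arguments, and for item 6 the paper phrases the bound through $\norm{\nabla f}{H^{-1}} \leqslant \norm{f}{L^2}$ whereas you dualize via the adjoint identity and the $L^2$ finite-difference bound -- both are equivalent one-line derivations.
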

\begin{proof}
	Item 1 follows from a direct computation.
	Item 2 follows from the fact that $\Delta_h^i f = f\circ\Phi_h^i - f$ for $\Phi_h^i(x) \vcentcolon= x + he_i$, such that $D\Phi_h^i = I$.
	Items 4 and 5 are proved in \cite{giaquinta_martinazzi}.
	Item 6 follows from the fact that, for any $f\in L^2$, $\nabla f\in H^{-1}$ with $ \norm{\nabla f}{H^{-1}} \leqslant \norm{f}{L^2}$.
	Now we finally we turn our attention to the proof of item 3, which comes down to a short computation.
	Writing $y \vcentcolon= g(x)$ and $z \vcentcolon= g(x+he_i)$, such that $z-y = (\Delta_h^i g)(x)$, we have that
	\begin{align*}
		\left[ \Delta_h^i (f\circ g) \right] (x)
		= f(z) - f(y)
		= \int_0^1 (z-y) f' \left( y + \theta(z-y) \right) d\theta
	\\
		= (\Delta_h^i g)(x) \int_0^1 f' \left( g(x) + \theta (\Delta_h^i g)(x) \right) d\theta,
	\end{align*}
	as claimed.
\end{proof}

We now turn our attention towards the topic occupying the bulk of this section: formulating the de Giorgi regularity theory using Campanato functions.
There are three important components of that in this section:
(1) obtaining a sufficient condition for H\"{o}lder continuity in terms of local integrals of gradients, which is done in \fref{Proposition}{prop:suff_Campanato_cond_gradients},
(2) reformulating the classical de Giorgi result in terms of Campanato functions, which is done in \fref{Proposition}{prop:Campanato_formulation_hom_DG},
and (3) deriving an \emph{a priori} estimate of Campanato-type for elliptic equations with forcing in divergence-form.

First we fix notation and define Campanato functions.

\begin{definition}[Campanato functions]
\label{def:Campanato}
	We say that $u\in L^2 (\mathbb{T}^d)$ is \emph{Campanato (regular) of order $\alpha$} for some $\alpha\in (0,\,1)$ if there exists a constant $K > 0$ such that,
	for every $x\in\mathbb{R}^3$ and $R>0$,
	$
		\int_{B(x,\,R)} {\lvert u - u_{x,\,R} \rvert}^2 \leqslant K^2 R^{d+2\alpha}
	$
	where
	$
		u_{x,\,R} \vcentcolon= \fint_{B(x,\,R)} u
	$
	denotes the local average of $u$.
	We call $K$ the \emph{Campanato constant} of $u$.
\end{definition}

\begin{remark}[Interpretation of the Campanato condition]
\label{rmk:interpretation_Campanato}
	It is interesting to consider the form of the defining estimate of Campanato functions using \emph{normalised averages}
	and we do this here in the (slightly more general) context of $L^p$ based Campanato spaces, for $p \in [1,\,\infty)$.
	Specifically, note that the defining inequality
	$
		\int_{B(x,\,R)} {\lvert u - u_{x,\,R} \rvert}^2 \leqslant M^p R^{d + \alpha p}
	$
	is equivalent to
	$
		{ \left( \fint_{B(x,\,R)} {\lvert u - \overline{u}_{x,\,R} \rvert}^p  \right)}^{1/p}
		\leqslant \frac{M}{\omega_d^{1/p}} R^\alpha,
	$
	where $\omega_d$ denotes the $d$-dimensional volume of the unit ball in $\mathbb{R}^d$.
	The latter inequality makes it more apparent that the deviation of $u$ from its average is only controlled by $R^\alpha$,
	as would be expected from a function which is $\alpha$-H\"{o}lder continuous.
\end{remark}

We now record half of the key fact about Campanato function, which is that they are nothing more than an equivalent characterisation of H\"{o}lder continuous functions.
We only mention one direction of the equivalence below, the other direction is much easier to prove.

\begin{lemma}[Campanato implies H\"{o}lder]
\label{lemma:Campanato_implies_Holder}
	Let $u \in L^2 (\mathbb{T}^d)$ be \hyperref[def:Campanato]{Campanato of order $\alpha$}, for some $\alpha\in (0,\,1)$, with \hyperref[def:Campanato]{Campanato constant} $K$. 
	Then $u$ is H\"{o}lder continuous with exponent $\alpha$, and for every open set $\mathcal{U}\subseteq\mathbb{T}^3$,
	$
		\norm{u}{ C^\alpha ( \mathcal{U} ) }
		\leqslant
		C \left( K + \norm{u}{ L^2 } \right)
	$
	for some constant $C = C(\alpha,\,d,\,\mathcal{U})$.
\end{lemma}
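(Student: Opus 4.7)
The plan is to prove the classical Campanato embedding by showing that the local averages $u_{x,R}$ form a Cauchy family as $R \downarrow 0$, so that a Hölder-continuous pointwise representative of $u$ may be defined as their limit. I will first establish the Hölder seminorm bound in terms of $K$, and then bootstrap an $L^\infty$ bound from the $L^2$ norm to obtain the full $C^\alpha$ estimate.

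First I would compare averages at nested scales. For fixed $x$ and any $0 < r < R$, the elementary inequality
\begin{equation*}
    |u_{x,r} - u_{x,R}|^2 \leqslant \frac{2}{|B(x,r)|} \int_{B(x,r)} \bigl(|u - u_{x,r}|^2 + |u - u_{x,R}|^2\bigr)
\end{equation*}
combined with the Campanato hypothesis yields, when $r = R/2$, the bound $|u_{x,R/2} - u_{x,R}| \leqslant C K R^\alpha$ for some dimensional constant $C$. Summing the geometric series over dyadic scales $R_k = 2^{-k} R$ shows that $\{u_{x,R_k}\}$ is Cauchy in $\mathbb{R}$, converging to a limit $\tilde u(x)$; by the Lebesgue differentiation theorem, $\tilde u = u$ almost everywhere. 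The same geometric-series argument gives the pointwise control $|\tilde u(x) - u_{x,R}| \leqslant C K R^\alpha$ for every $R > 0$.

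Next I would establish Hölder continuity of $\tilde u$. For $x, y \in \mathbb{T}^d$ with $\rho \vcentcolon= |x-y|$ small, set $R = 2\rho$ so that $B(x,R) \cap B(y,R)$ contains a ball of radius comparable to $R$. Writing
\begin{equation*}
    |u_{x,R} - u_{y,R}|^2 \leqslant \frac{C}{R^d} \int_{B(x,R) \cap B(y,R)} \bigl(|u - u_{x,R}|^2 + |u - u_{y,R}|^2\bigr) \leqslant C K^2 R^{2\alpha},
\end{equation*}
and combining with the two-scale estimates $|\tilde u(x) - u_{x,R}|, |\tilde u(y) - u_{y,R}| \leqslant C K R^\alpha$, the triangle inequality produces $|\tilde u(x) - \tilde u(y)| \leqslant C K |x-y|^\alpha$, which controls the Hölder seminorm by $C K$.

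Finally, for the $L^\infty$ part of the $C^\alpha$ norm on $\mathcal{U}$, I would fix a radius $R_0 = R_0(\mathcal{U}) > 0$ small enough that $B(x, R_0) \subseteq \mathbb{T}^d$ for every $x \in \mathcal{U}$, and estimate
\begin{equation*}
    |\tilde u(x)| \leqslant |\tilde u(x) - u_{x,R_0}| + |u_{x,R_0}| \leqslant C K R_0^\alpha + \frac{1}{|B(x,R_0)|^{1/2}} \|u\|_{L^2},
\end{equation*}
using Cauchy–Schwarz for the second term. Combining the seminorm and supremum bounds yields $\|u\|_{C^\alpha(\mathcal{U})} \leqslant C(\alpha, d, \mathcal{U}) \bigl(K + \|u\|_{L^2}\bigr)$, as claimed. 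I expect the main technical point to be the comparison of averages at different base points, which forces the choice $R = 2|x-y|$ and the geometric fact that $B(x,R) \cap B(y,R)$ has measure comparable to $R^d$; everything else is bookkeeping via telescoping and the triangle inequality.
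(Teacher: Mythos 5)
Your proof is correct and follows the classical argument for the Campanato embedding: compare dyadic averages to get a Cauchy family and a Hölder-continuous representative, compare averages at nearby base points using the overlap of the two balls, and bootstrap the sup bound from $L^2$ via Cauchy--Schwarz at a fixed reference scale. The paper itself supplies no proof for this lemma, deferring entirely to Theorem 3.1 of Han--Lin, where essentially the same telescoping-of-averages argument appears; your write-up correctly reconstructs it, and the only point worth tightening is that on $\mathbb{T}^d$ every ball $B(x,R_0)$ is automatically admissible, so the restriction on $R_0$ is about avoiding self-overlap of the ball rather than about fitting inside the domain.
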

\begin{proof}
	See Theorem 3.1 of \cite{han_lin}
\end{proof}

We now progress towards the aforementioned sufficient condition for H\"{o}lder continuity involving local integrals of gradients.
The missing ingredient is a careful analysis of the scaling of the Poincar\'{e} constant in balls.

\begin{lemma}[Scaling of the Poincar\'{e} constant in balls]
\label{lemma:scaling_Poincare_constant}
	There exists a constant $C = C(d) > 0$ such that, for every $x\in\mathbb{T}^d$, every $r>0$, and every $u\in H^1( (B(x,\,r))$, the inequality 
	$
		\int_{B(x,\,r)} {\big\vert u - \fint_{B(x,\,r)} u \big\rvert}^2 \leqslant C r^2 \int_{B(x,\,r)} {\lvert \nabla u \rvert}^2
	$
	holds.
\end{lemma}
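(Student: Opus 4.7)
The plan is to reduce to the unit ball in $\mathbb{R}^d$ by scaling and translation, prove the Poincar\'{e}--Wirtinger inequality on $B(0,1)$ by a standard compactness--contradiction argument, and track the constants carefully through the rescaling. The torus structure only enters trivially: for $r$ small enough (say $r \leq r_0(d)$) the ball $B(x,r)$ is isometric to a Euclidean ball and the problem is entirely local; for $r \geq r_0$ the ball has measure comparable to that of $\mathbb{T}^d$ and one invokes the ordinary Poincar\'{e} inequality on the torus, absorbing the (bounded) ratio $(\mathrm{diam}\,\mathbb{T}^d/r)^2$ into the constant.

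First I would handle the scaling. Given $u\in H^1(B(x,r))$, define $v:B(0,1)\to\mathbb{R}$ by $v(y)\vcentcolon= u(x+ry)$. Then the change of variables $z=x+ry$ gives
\begin{equation*}
\int_{B(0,1)} {|v - \bar v|}^2\,dy = r^{-d} \int_{B(x,r)} {|u - \bar u|}^2\,dz,
\qquad
\int_{B(0,1)} {|\nabla v|}^2\,dy = r^{2-d} \int_{B(x,r)} {|\nabla u|}^2\,dz,
\end{equation*}
where the averages $\bar v$ and $\bar u$ correspond to each other. Consequently, if one establishes a unit-ball Poincar\'{e} inequality
$\int_{B(0,1)} |v-\bar v|^2 \leqslant C_0(d) \int_{B(0,1)} |\nabla v|^2$
for every $v\in H^1(B(0,1))$, the desired inequality follows with $C = C_0(d)$ after multiplying through by $r^d$.

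The core content is thus the unit-ball Poincar\'{e}--Wirtinger inequality, which I would prove by contradiction: suppose no such $C_0$ exists, so there is a sequence $(v_n)\subseteq H^1(B(0,1))$ with $\int v_n = 0$, $\norm{v_n}{L^2(B(0,1))} = 1$, and $\norm{\nabla v_n}{L^2(B(0,1))}\to 0$. Then $(v_n)$ is bounded in $H^1(B(0,1))$; by Rellich--Kondrachov compactness (and reflexivity) a subsequence converges weakly in $H^1$ and strongly in $L^2$ to some $v_\infty \in H^1(B(0,1))$. The strong $L^2$ convergence preserves the conditions $\int v_\infty = 0$ and $\norm{v_\infty}{L^2} = 1$, while weak lower semi-continuity of the gradient norm forces $\nabla v_\infty = 0$ in $B(0,1)$. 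Since $B(0,1)$ is connected, $v_\infty$ must be constant, and the zero-average condition then gives $v_\infty \equiv 0$, contradicting $\norm{v_\infty}{L^2} = 1$.

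The only genuine subtlety is the large-$r$ case on the torus, but it is a minor bookkeeping issue: fix $r_0(d)$ so that balls of radius at most $r_0$ in $\mathbb{T}^d$ are isometric to Euclidean balls, apply the scaling argument above for $r\leqslant r_0$, and for $r > r_0$ observe that $B(x,r)$ occupies a fixed fraction of $\mathbb{T}^d$ so that the global torus Poincar\'{e} inequality (proved, e.g., by Fourier series or by the same compactness argument on $\mathbb{T}^d$) yields the bound with constant $C_0(d)\cdot (r_0)^{-2}\cdot r^2$, which is of the required form. The main obstacle, if any, is purely expository in packaging the two regimes into a single constant; the analytical content is entirely captured by Rellich's theorem applied on $B(0,1)$.
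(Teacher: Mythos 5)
Your proof takes the same route as the paper: rescale via $v(y) = u(x+ry)$ to reduce to a Poincar\'{e}--Wirtinger inequality on the unit ball, then read off the factor $r^2$ from the change of variables. You go further than the paper in two respects: you supply a compactness proof of the unit-ball inequality (the paper simply cites it), and you flag the torus wrap-around issue for large $r$, which the paper silently ignores; both additions are correct, though the intermediate regime $r_0 < r < \mathrm{diam}\,\mathbb{T}^d$ (where $B(x,r)$ is neither a Euclidean ball nor all of $\mathbb{T}^d$) would need a word about why the constant stays uniform there.
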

\begin{proof}
	This follows from the Poincar\'{e} inequality in the unit ball and scaling: defining $v (y) \vcentcolon= u(x + ry)$ such that $v\in H^1 (B(0,1))$
	and applying the Poincar\'{e} inequality to $v$ produces the claim.
\end{proof}

We are now ready to state and prove the first of three main results of this section.

\begin{prop}[Sufficient Campanato condition involving local integrals of gradients]
\label{prop:suff_Campanato_cond_gradients}
	Suppose that $u\in H^1_\text{loc} ( \mathbb{T}^d )$ such that, for some $K > 0$ and $\alpha\in (0,\,1)$, the inequality
	$
		\int_{B(x,\,r)} {\lvert \nabla u \rvert}^2
		\leqslant K^2 r^{d+2\alpha - 2}
	$
	holds for every $B(x,\,r) \subseteq \mathbb{T}^d$.
	Then $u\in C^\alpha$ and, for every open set $\mathcal{U} \subseteq\mathbb{T}^d$, we have the estimate
	$
		\norm{u}{C^\alpha ( \mathcal{U} )} \leqslant C \left( K + \norm{u}{L^2 (\mathbb{T}^d)} \right)
	$
	for some constant $C = C \left(\alpha,\,d,\,\mathcal{U} \right)$.
\end{prop}
\begin{proof}
	This follows from combining \fref{Lemma}{lemma:Campanato_implies_Holder}, which says that all Campanato functions are H\"{o}lder, with the Poincar\'{e} inequality
	in a ball -- c.f. \fref{Lemma}{lemma:scaling_Poincare_constant}.
\end{proof}

We now turn our attention towards proving the second of three main results of this section, namely reformulating the classical de Giorgi Theorem in terms of Campanato functions.
First we need this technical lemma about test functions, which really tells us that we can find a smooth version $\varphi$ of the indicator $\mathds{1}_{B_r}$
which vanishes outside the larger ball $B_R$ while retaining quantitative control over the gradient of $\varphi$.

\begin{lemma}[Careful scaling of a test function]
\label{lemma:careful_scale_test_func}
	There exists a universal constant $C>0$ such that for every $0 < r < R$ we may find $\varphi\in C_c^\infty (B_R)$ satisfying $0 \leqslant \varphi \leqslant 1$
	such that $\varphi \equiv 1$ in $B_r$ and
	$
		\norm{\nabla\varphi}{\infty} \leqslant \frac{C}{R-r}
	$.
\end{lemma}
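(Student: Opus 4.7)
The plan is to build $\varphi$ by composing a one-dimensional smooth step function with the radial distance to the origin, rescaled to the annular region $B_R\setminus B_r$.

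First I would fix, once and for all, a smooth function $\eta:\mathbb{R}\to[0,1]$ with $\eta\equiv 1$ on $(-\infty,0]$, $\eta\equiv 0$ on $[1,\infty)$, and $\norm{\eta'}{L^\infty(\mathbb{R})} \leqslant C_0$ for some universal constant $C_0$. (Such an $\eta$ can be constructed, for instance, from a standard mollifier: take $\eta(t) \vcentcolon= 1 - \int_{-\infty}^t \psi$ for a non-negative smooth $\psi$ supported in $(0,1)$ with $\int \psi = 1$.) Then I would define, for $x\in\mathbb{R}^d$,
\begin{equation*}
    \varphi(x) \vcentcolon= \eta\!\left( \frac{\lvert x \rvert - r}{R-r} \right).
\end{equation*}

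Next I would verify each of the required properties in turn. For the range, $0\leqslant \eta\leqslant 1$ immediately gives $0\leqslant \varphi\leqslant 1$. For the condition $\varphi\equiv 1$ on $B_r$: if $\lvert x \rvert < r$ then the argument of $\eta$ is negative, hence $\eta=1$. For compact support in $B_R$: if $\lvert x \rvert \geqslant R$ then the argument of $\eta$ is $\geqslant 1$, hence $\eta=0$, so $\supp \varphi \subseteq \overline{B_R}$ and in fact lies in $B_R$ because of the strict inequality on the $\eta\equiv 0$ side. For smoothness: on the open set $ \left\{ \lvert x \rvert < r \right\} $ the function $\varphi$ is identically $1$ and hence smooth; on the complement of $ \left\{ 0 \right\} $ the map $x\mapsto \lvert x \rvert$ is smooth and so $\varphi$ is smooth as a composition; and since $B_r$ contains a neighbourhood of the origin, $\varphi\in C^\infty(\mathbb{R}^d)$ globally. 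Finally, for the gradient bound, the chain rule yields
\begin{equation*}
    \nabla\varphi(x) = \eta'\!\left( \frac{\lvert x \rvert - r}{R-r} \right) \frac{x}{\lvert x \rvert (R-r)}
\end{equation*}
wherever $x\neq 0$, and $\nabla\varphi=0$ inside $B_r$, so $\norm{\nabla\varphi}{L^\infty} \leqslant C_0/(R-r)$, which gives the desired estimate with $C = C_0$.

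I do not anticipate a genuine obstacle here: the only mild subtlety is ensuring that $\varphi$ is smooth at the origin despite the non-smoothness of $\lvert x \rvert$ there, but this is automatic because $\varphi$ is locally constant on a neighbourhood of $0$. The construction is manifestly rescalable, which is exactly why the bound $C/(R-r)$ comes out with a universal constant.
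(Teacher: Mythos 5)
Your construction is exactly the one the paper uses: take a fixed one-dimensional smooth cutoff (the paper calls it a radial profile $\rho$, you call it $\eta$) and compose with the affinely rescaled radial distance $\bigl(\lvert x\rvert - r\bigr)/(R-r)$, with the universal constant $C$ equal to $\norm{\eta'}{\infty}$. Your write-up simply spells out the routine verifications (smoothness at the origin, support, chain rule) that the paper leaves implicit, so the two proofs coincide in substance.
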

\begin{proof}
	Consider a radial profile $\rho\in C^\infty (\mathbb{R})$ satisfying $0\leqslant \rho \leqslant 1$ which is supported in $(-\infty,\,1)$ and such that $\rho(s) = 1$ if $s<0$.
	The function $\varphi (x) \vcentcolon= \rho \left( \frac{ \lvert x \rvert - r}{R-r} \right)$ satisfies the desired properties, with $C = \norm{\rho'}{\infty}$.
\end{proof}

Since we will refer to uniformly elliptic matrices repeatedly in the sequel we fix notation on the matter here.

\begin{definition}[Uniform ellipticity]
\label{def:unif_ell}
	Let $\mathcal{U} \subseteq \mathbb{T}^3$.
	A matrix-valued pointwise symmetric function $A\in L^\infty$ is said to be \emph{uniformly elliptic with constants $\theta,\,\Lambda > 0$ in $\mathcal{U}$} if, for every $x\in \mathcal{U} $,
	$ A(x) \geqslant \theta I$ and $ \lvert A(x) \rvert \leqslant \Lambda$ for every $x\in\mathbb{T}^3$.
\end{definition}

We now continue building towards the Campanato formulation of the classical de Giorgi result -- the next step is to obtain the Caccioppoli inequality.
It will follow from the following energy estimate.

\begin{lemma}[Energy inequality, a.k.a. prelude to the Caccioppoli inequality]
\label{lemma:prelude_Caccioppoli}
	Let $u\in H^1$ be an $H^1$-weak solution of
	$
		-\nabla\cdot \left( A(x) \nabla u \right) = 0
	$ in $\mathbb{T}^d$
	where $A\in L^\infty$ is pointwise symmetric and \hyperref[def:unif_ell]{uniformly elliptic} with constants $\theta,\,\Lambda > 0$ in $\mathbb{T}^d$.
	For every $\varphi\in C_c^\infty$ the following estimate holds:
	$
		\int \varphi^2 {\lvert \nabla u \rvert}^2 \leqslant \frac{4\Lambda^2}{\theta^2} \int u^2 {\lvert \nabla\varphi \rvert}^2.
	$
\end{lemma}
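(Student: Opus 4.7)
The plan is to test the weak formulation of the equation against $\varphi^2 u$, which is a legitimate test function since $\varphi \in C_c^\infty$ and $u \in H^1$ (so $\varphi^2 u$ belongs to $H^1$ and, by the compact support of $\varphi$, may be approximated by elements of $\mathring{H}^1$ on any ball containing $\supp \varphi$). By the product rule $\nabla(\varphi^2 u) = \varphi^2 \nabla u + 2 \varphi u \nabla \varphi$, and the weak formulation $\int \nabla(\varphi^2 u) \cdot A \nabla u = 0$ rearranges to
\begin{equation*}
    \int \varphi^2 \, \nabla u \cdot A \nabla u = -2 \int \varphi u \, \nabla\varphi \cdot A\nabla u.
\end{equation*}

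Next I would invoke the two sides of uniform ellipticity: the lower bound $A(x) \geqslant \theta I$ controls the left-hand side from below by $\theta \int \varphi^2 |\nabla u|^2$, while the upper bound $|A(x)| \leqslant \Lambda$ (combined with Cauchy--Schwarz pointwise) controls the right-hand side in absolute value by $2\Lambda \int (\varphi |\nabla u|)(|u| \, |\nabla\varphi|)$.

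The crux is then a weighted Young inequality chosen to absorb the gradient term back into the left-hand side. Specifically, applying $2ab \leqslant \tfrac{\theta}{\Lambda} a^2 + \tfrac{\Lambda}{\theta} b^2$ with $a = \Lambda \varphi |\nabla u|$ and $b = \Lambda |u| \, |\nabla\varphi|$ gives
\begin{equation*}
    2\Lambda \int \varphi |\nabla u| \cdot |u| \, |\nabla\varphi|
    \leqslant \tfrac{\theta}{2} \int \varphi^2 |\nabla u|^2 + \tfrac{2\Lambda^2}{\theta} \int u^2 |\nabla\varphi|^2.
\end{equation*}
Combining this with the previous lower bound yields $\theta \int \varphi^2 |\nabla u|^2 \leqslant \tfrac{\theta}{2} \int \varphi^2 |\nabla u|^2 + \tfrac{2\Lambda^2}{\theta} \int u^2 |\nabla\varphi|^2$, from which the claimed inequality follows after absorbing. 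I do not anticipate any real obstacle beyond tuning the constants in Young's inequality; the only delicate point is justifying that $\varphi^2 u$ is an admissible test function, which reduces to the fact that $H^1$ is closed under multiplication by smooth compactly supported functions.
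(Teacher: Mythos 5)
Your proof follows exactly the route the paper takes: test the weak formulation against $\varphi^2 u$, expand $\nabla(\varphi^2 u)$, bound the left-hand side below via ellipticity, bound the right-hand side above via $\lvert A\rvert\leqslant\Lambda$, and absorb with a weighted Young inequality. One small bookkeeping slip: with the weights you state ($a=\Lambda\varphi\lvert\nabla u\rvert$, $b=\Lambda\lvert u\rvert\,\lvert\nabla\varphi\rvert$ and the inequality $2ab\leqslant\tfrac{\theta}{\Lambda}a^2+\tfrac{\Lambda}{\theta}b^2$) the coefficient on $\varphi^2\lvert\nabla u\rvert^2$ comes out as $\theta$ rather than $\theta/2$, which cannot be absorbed; taking instead $a=\varphi\lvert\nabla u\rvert$, $b=\Lambda\lvert u\rvert\,\lvert\nabla\varphi\rvert$ with $2ab\leqslant\tfrac{\theta}{2}a^2+\tfrac{2}{\theta}b^2$ yields precisely the display you wrote and the claimed constant $4\Lambda^2/\theta^2$.
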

\begin{proof}
	We may test $-\nabla\cdot (A\nabla u) = 0$ against $\varphi^2 u$ to observe that the identity
	$
		\int_{\mathbb{T}^3} \varphi^2 A\nabla u\cdot\nabla u = - 2 \int_{\mathbb{T}^3} A (\varphi\nabla u)\cdot (u\nabla\varphi)
	$
	holds.
	Using the ellipticity of $A$ on the left-hand side and an $\varepsilon$-Cauchy inequality on the right-hand side of this identity then allows us to conclude.
\end{proof}

We are now equipped to prove the Caccioppoli inequality.

\begin{cor}[Caccioppoli inequality]
\label{cor:Caccioppoli}
	Let $u\in H^1$ be an $H^1$-weak solution of
	$
		-\nabla\cdot \left( A(x)\nabla u \right) = 0
	$ in $\mathbb{T}^d$
	where $A\in L^\infty$ is pointwise symmetric and \hyperref[def:unif_ell]{uniformly elliptic} with constants $\theta,\,\Lambda > 0$ in $\mathbb{T}^d$.
	There exists a constant $C = C(d) > 0$ such that, for every $0 < r < R < \infty$ and every $a\in\mathbb{R}$,
	$
		\int_{B_r} {\lvert \nabla u \rvert}^2 \leqslant \frac{C \Lambda^2 }{ { \theta^2 \left( R - r \right) }^2 } \int_{B_R} {\lvert u - a \rvert}^2.
	$
\end{cor}
\begin{proof}
	Without loss of generality we may set $m=0$ since otherwise $v \vcentcolon= u - a$ satisfies $\nabla v = \nabla u$ and hence $v$ solves the same equation as $u$.
	We then pick $\varphi$ as in \fref{Lemma}{lemma:careful_scale_test_func} and use it as a test function in \fref{Lemma}{lemma:prelude_Caccioppoli} to establish the claim.
\end{proof}

We are now ready to reformulate the classical de Giorgi result in the language of Campanato.
First we record that classical theorem here.

\begin{theorem}[de Giorgi]
\label{thm:De_Giorgi_hom}
	Let $u\in H^1 (\Omega)$ be an $H^1$-weak solution of the equation
	$
		- \nabla\cdot \left( A(x)\nabla u \right) = 0
	$
	in a domain $\Omega\subseteq \mathbb{R}^d$ with $d\geqslant 3$,
	where $A\in L^\infty$ is pointwise symmetric and \hyperref[def:unif_ell]{uniformly elliptic} with constants $\theta,\,\Lambda > 0$ in $\Omega$.
	There exists $\alpha = \alpha (d,\, \theta,\, \Lambda) \in (0,\,1)$ and $C = C(d,\, \theta,\,\Lambda) > 0$ such that $u\in C^\alpha$ and, for every $x_0$,
	\begin{equation*}
		\sup_{x\in B \left( x_0,\, 1/4 \right)} \frac{ \lvert u(x) - u(x_0) \rvert}{ {\lvert x - x_0 \rvert}^\alpha} \leqslant C \norm{u}{L^2 ( B(x_0,\,2) )} .
	\end{equation*}
\end{theorem}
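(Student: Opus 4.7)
The plan is to follow the classical De Giorgi strategy in three steps: local boundedness, one-scale oscillation decay, and iteration. The target is to exhibit a universal $\gamma = \gamma(d, \theta, \Lambda) \in (0, 1)$ such that the oscillation of $u$ decays by the factor $\gamma$ each time one halves the radius, whereupon iteration yields H\"{o}lder continuity with exponent $\alpha = -\log_2 \gamma \in (0, 1)$, and the pointwise seminorm is then controlled by $\norm{u}{L^\infty(B(x_0, 1))}$, which the first step bounds by $\norm{u}{L^2(B(x_0, 2))}$.

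First I would establish De Giorgi's first lemma: any $H^1$-weak subsolution satisfies a quantitative $L^\infty$ bound, namely $\norm{u}{L^\infty(B(x_0, 1))} \leqslant C \norm{u}{L^2(B(x_0, 2))}$ for some $C = C(d, \theta, \Lambda)$. The argument applies \fref{Corollary}{cor:Caccioppoli} to the truncations $(u - k_n)_+$---which are themselves $H^1$-weak subsolutions of the same equation---on a geometric sequence of nested balls $B_{r_n}$ with $r_n \downarrow 1$ and truncation levels $k_n \uparrow k_\infty$. Combined with the Sobolev embedding $H^1 \hookrightarrow L^{2d/(d-2)}$ (here is where $d \geqslant 3$ enters), this produces a nonlinear recursion of the form $a_{n+1} \leqslant C^n a_n^{1 + \varepsilon}$ for $a_n \vcentcolon= \int_{B_{r_n}} (u - k_n)_+^2$, forcing $a_n \to 0$ as soon as $a_0$ is sufficiently small, which can be arranged after a preliminary rescaling of $u$.

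Second I would prove the one-scale oscillation decay. After the affine normalization $v \vcentcolon= (u - m)/(M - m)$ with $m \vcentcolon= \inf_{B_1} u$ and $M \vcentcolon= \sup_{B_1} u$, the rescaled solution satisfies $0 \leqslant v \leqslant 1$ in $B_1$ and solves the same class of equation. One of the two half-level sets $\{v \leqslant 1/2\}$ or $\{v \geqslant 1/2\}$ has measure at least $|B_1|/2$; after possibly replacing $v$ by $1 - v$, say the former does. Then De Giorgi's second lemma applies: if $v$ is a subsolution with $v \leqslant 1$ in $B_2$ and $|\{v \leqslant 0\} \cap B_1| \geqslant \mu_0 > 0$, then $v \leqslant 1 - \delta$ in $B_{1/2}$ for some $\delta = \delta(d, \theta, \Lambda, \mu_0) > 0$. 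This translates into $\mathrm{osc}(u, B_{1/2}) \leqslant (1 - \delta/2)\, \mathrm{osc}(u, B_1)$.

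The main obstacle will be De Giorgi's second lemma, whose proof rests on the intermediate-value (isoperimetric) lemma: if $w \in H^1(B_1)$ with $|\{w \leqslant 0\}| \geqslant \mu_0$ and $|\{w \geqslant 1\}| \geqslant \mu_0$, then $|\{0 < w < 1\}|$ is bounded below by a constant depending only on $\mu_0$ and $\norm{\nabla w}{L^2(B_1)}$. This is a purely measure-theoretic fact following from the relative isoperimetric inequality on annuli. Feeding it into a De Giorgi iteration applied to the truncations $(v - (1 - 2^{-n}))_+$ on $B_{1/2}$ shows that, unless $v$ is already uniformly below $1 - 2^{-N}$ in $B_{1/2}$ for some finite $N$, the level-set measures cannot shrink fast enough to absorb the Caccioppoli-bounded energy, yielding a contradiction. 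With the one-scale oscillation decay in hand, iterating on the dyadic balls $B(x_0, 2^{-k})$ gives $\mathrm{osc}(u, B(x_0, 2^{-k})) \leqslant \gamma^k \cdot 2 \norm{u}{L^\infty(B(x_0, 1))}$ for every $k \geqslant 0$, which combined with the first step delivers the pointwise H\"{o}lder estimate claimed on $B(x_0, 1/4)$.
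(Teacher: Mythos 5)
The paper does not write out a proof of this theorem: it cites de~Giorgi's original paper and points to the expository accounts of Caffarelli--Vasseur and Vasseur. Your three-step sketch (local boundedness via Caccioppoli and Sobolev feeding a nonlinear iteration; oscillation decay via the isoperimetric intermediate-value lemma and a second truncation iteration; dyadic iteration of the decay to get the H\"older seminorm) is exactly the classical de~Giorgi argument those references present, so the proposal is correct and takes the same approach.
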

\begin{proof}
	This was first proved in \cite{de_giorgi}.
	Very digestible proofs are provided in \cite{cafarelli_vasseur, vasseur}.
\end{proof}

When reformulated using Campanato functions, de Giorgi's Theorem becomes the following result.

\begin{prop}[Campanato formulation of de Giorgi's Theorem]
\label{prop:Campanato_formulation_hom_DG}
	Let the function $w\in H^1 (B(x_0,\,R)) \subseteq \mathbb{T}^d$ be an $H^1$-weak solution of
	$
		-\nabla\cdot \left( A(x)\nabla w \right) = 0
	$
	in $B(x_0,\, R)$
	where $A\in L^\infty$ is pointwise symmetric and \hyperref[def:unif_ell]{uniformly elliptic} with constants $\theta,\,\Lambda$ in $B(x_0,\, R)$.
	There exist $\alpha = \alpha ( d,\,\theta,\,\Lambda) \in (0,\,1)$ and  $C = C(d,\,\theta,\,\Lambda) > 0$ such that, for any $0 < \rho < R$,
	$
		\int_{B(x_0,\,\rho)} {\lvert \nabla w \rvert}^2
		\leqslant C { \left( \frac{\rho}{R} \right)}^{d-2+2\alpha} \int_{B(x_0,\,R)} {\lvert \nabla w \rvert}^2.
	$
\end{prop}
\begin{proof}
	Note that if the claim is proved for $x_0 = 0$ and $R=2$ then we may deduce that it holds for arbitrary $x_0$ and $R$ by translation and scaling.
	Indeed, we may define $u(x) \vcentcolon= w \left( x_0 + \frac{R}{2} x \right)$ such that $u$ solves $-\nabla\cdot ( \widetilde{A}\nabla u) = 0$ in $B(0,\, 2)$,
	where crucially $\widetilde{A}(x) \vcentcolon= A \left( x_0 + \frac{R}{2} x \right)$ has the same uniform ellipticity constants as $A$.
	This allows us to deduce that if the claim holds for $w$ in $B(0,\,2)$, then it holds for $u$ in $B(x_0,\,R)$.

	So now we show that the claim holds when $x_0 = 0$ and $R=2$.
	Since $u - \fint u$ solves the same equation as $u$ we may without loss of generality assume that $\fint_{B_2} u = 0$.
	We now combine \fref{Corollary}{cor:Caccioppoli}, \fref{Theorem}{thm:De_Giorgi_hom}, and the Poincar\'{e} inequality:
	\begin{align*}
		\int_{B_\rho} {\lvert \nabla u \rvert}^2 
		\leqslant \frac{C}{\rho^2} \int_{B_{2\rho}} {\lvert u - u(0) \rvert}^2
		\leqslant \frac{C}{\rho^2} \int_{B_{2\rho}} {\lvert x \rvert}^{2\alpha} \left( \int_{B_2} u^2 \right)
		\leqslant C \rho^{d+2\alpha -2} \int_{B_2} {\lvert \nabla u \rvert}^2,
	\end{align*}
	where we have used that $\int_{B_r} {\lvert x \rvert}^p = C(d,\,p) s^{d+p}$.
	Note that the chain of inequalities above only holds if $\rho \leqslant 1/8$, such that $2\rho \leqslant 1/4$ and \fref{Theorem}{thm:De_Giorgi_hom} applies.
	Nonetheless, the desired inequality also holds for $\rho > 1/8$,
	albeit more trivially so since in that case we only need to note that $\rho^{d+2\alpha-2} \geqslant 8^{2-2\alpha-d} = C(\alpha,\,d)$,
	and this concludes the proof.
\end{proof}

We now record the third of three main results from this section: an \emph{a priori} estimate for elliptic equations whose forcing is in divergence-form.

\begin{prop}[Campanato-type \emph{a priori} estimate for forcing in divergence form]
\label{prop:Campanato_type_a_priori_estimate_div}
	Suppose that $d > 1$ and let $w\in H_0^1 (B(x_0,\,R))$ be an $H_0^1$-weak solution of
	$
		- \nabla\cdot \left( A\nabla w \right) = \nabla\cdot F
	$
	in $B(x_0,\,R) \subseteq \mathbb{T}^d$,
	where $A\in L^\infty$ is pointwise symmetric and \hyperref[def:unif_ell]{uniformly elliptic} with constants $\theta,\,\Lambda > 0$ in $\mathbb{T}^d$,
	and where $F\in L^q( B(x_0,\,R))$ for some $q>d$.
	There exist $\alpha = \alpha(d,\,q) \in (0,\,1)$ and $C = C(\alpha,\,d,\,\theta) > 0$ such that
	$
		\int_{B(x_0,\,R)} {\lvert \nabla w \rvert}^2 
		\leqslant C \norm{F}{L^q ( B(x_0,\,R) )} R^{d-2+2\alpha}.
	$
\end{prop}
\begin{proof}
	We choose $\tilde{q} \in (1,\,\infty)$ such that
	$
		\frac{1}{q} + \frac{1}{\tilde{q}} = \frac{1}{2}
	$
	, which is possible since $q > d \geqslant 2$.
	The estimate then follows from combining the ellipticity of $A$, integration by parts, and the H\"{o}lder inequality (twice) since, using that $w = 0$ on $\partial B(x_0,\,R)$,
	\begin{align*}
		\theta \norm{\nabla w}{L^2 (B(x_0,\,R) )}^2
		&\leqslant \int_{B(x_0,\,R)} A \nabla w \cdot \nabla w
		= - \int_{B(x_0,\,R)} F \cdot\nabla w
	\\
		&\leqslant \norm{F}{L^q (B(x_0,\,R) )} { \lvert B(x_0,\,R) \rvert}^{d/\tilde{q}} \norm{\nabla w}{L^2 (B(x_0,\,R) )}.
	\end{align*}
	First dividing both sides of this inequality by $\norm{\nabla w}{L^2}^2$ and then squaring yields
	$
		\norm{\nabla w}{L^2 (B(x_0,\,R)}^2 \leqslant C(d,\,q,\,\theta) \norm{F}{L^q (B(x_0,\,R)}^2 R^{2d / \tilde{q}}.
	$
	To conclude we simply note that, since $q>d$, $\alpha \vcentcolon= 1 - \frac{d}{q}$ belongs to $(0,\,1)$ such that $2d / \tilde{q} = d-2+2\alpha$ as desired.
\end{proof}

As we approach the close of the portion of this section devoted to results pertaining to the de Giorgi theory we record the following technical lemma without proof.

\begin{lemma}
\label{lemma:technical_lemma_inhom_DG}
	Let $\phi : (0,\,\infty) \to (0,\,\infty)$ be a non-decreasing map.
	Suppose that there are $0 < \beta < \gamma$ and $A,\, B,\, R_0 > 0$ such that
	$
		\phi(\rho) \leqslant A { \left( \frac{\rho}{r} \right)}^\gamma \phi (r) + B r^\beta
	$
	for every $0 < \rho \leqslant r \leqslant R_0$.
	Then, for every $\delta \in (\beta,\,\gamma)$ there exists $C = C(A,\,\beta,\,\gamma,\,\delta) > 0$ such that
	$
		\phi(\rho) \leqslant C \left[ { \left( \frac{\rho}{r} \right) }^\delta \phi(r) + B \rho^\beta \right]
	$
	for every $0 < \rho \leqslant r \leqslant R_0$.
\end{lemma}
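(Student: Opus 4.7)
The strategy is a dyadic iteration of the hypothesis at a carefully chosen scale ratio. Fix $\delta \in (\beta,\,\gamma)$. Since $\gamma > \delta$, the function $\tau \mapsto A \tau^{\gamma - \delta}$ vanishes as $\tau \downarrow 0$, so I can choose $\tau = \tau(A,\gamma,\delta) \in (0,\,1)$ small enough that $A \tau^\gamma \leqslant \tau^\delta$. Applied to the pair $(\tau r,\, r)$, the hypothesis then reads
\begin{equation*}
	\phi(\tau r) \leqslant \tau^\delta \phi(r) + B r^\beta
	\quad \text{for all } r \in (0,\, R_0].
\end{equation*}

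Iterating this one-step inequality is the heart of the argument. By induction on $k \geqslant 0$, for every $r \in (0,\, R_0]$,
\begin{equation*}
	\phi(\tau^k r) \leqslant \tau^{k\delta} \phi(r) + B r^\beta \sum_{j=0}^{k-1} \tau^{(k-1-j)\delta} \tau^{j\beta}
	= \tau^{k\delta} \phi(r) + B r^\beta\, \tau^{(k-1)\delta} \sum_{j=0}^{k-1} \tau^{j(\beta-\delta)}.
\end{equation*}
Because $\beta < \delta$, the ratio $\tau^{\beta-\delta} > 1$ and the geometric sum is bounded above by $C_1 \tau^{(k-1)(\beta-\delta)}$ for a constant $C_1 = C_1(\beta,\delta,\tau)$ depending only on $A,\,\beta,\,\gamma,\,\delta$. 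Collecting the powers of $\tau$ gives
\begin{equation*}
	\phi(\tau^k r) \leqslant \tau^{k\delta} \phi(r) + C_1 B\, \tau^{-\beta} (\tau^k r)^\beta.
\end{equation*}

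It remains to pass from the discrete geometric scales $\tau^k r$ to arbitrary $\rho \in (0,\, r]$. Given such a $\rho$, pick $k \geqslant 0$ so that $\tau^{k+1} r < \rho \leqslant \tau^k r$, which in particular forces $\tau^k r < \rho/\tau$. Since $\phi$ is non-decreasing,
\begin{equation*}
	\phi(\rho) \leqslant \phi(\tau^k r) \leqslant \tau^{k\delta} \phi(r) + C_1 B\, \tau^{-\beta} (\tau^k r)^\beta
	\leqslant \tau^{-\delta} {\left(\tfrac{\rho}{r}\right)}^\delta \phi(r) + C_1 B\, \tau^{-2\beta} \rho^\beta,
\end{equation*}
where the last step uses $\tau^k \leqslant \rho/(\tau r)$ and $\tau^k r \leqslant \rho/\tau$. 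Setting $C \vcentcolon= \max\bigl(\tau^{-\delta},\, C_1 \tau^{-2\beta}\bigr)$, which depends only on $A,\,\beta,\,\gamma,\,\delta$, yields the claimed inequality.

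The only delicate point is the choice of $\tau$: it must simultaneously absorb the constant $A$ into a power of $\tau$ (forcing $\tau \leqslant A^{-1/(\gamma-\delta)}$) and produce a convergent geometric series when iterated (which is automatic from $\beta < \delta$). Once $\tau$ is fixed everything else is bookkeeping, so I do not expect any genuine obstacle.
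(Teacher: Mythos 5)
Your proof is correct, and the argument is complete. Since the paper simply cites Lemma~3.4 of Han and Lin without giving a proof, you have supplied what the paper omits; the dyadic-iteration argument you use (choose $\tau$ small enough that $A\tau^\gamma \leqslant \tau^\delta$, iterate the one-step inequality, control the resulting geometric sum using $\beta < \delta$, then bridge from discrete scales $\tau^k r$ to an arbitrary $\rho$ via monotonicity of $\phi$) is exactly the standard proof of that cited lemma. Every step checks out: the existence of $\tau = \tau(A,\gamma,\delta) \in (0,1)$ follows from $\gamma > \delta$; the induction is correct (your sum $\sum_{j=0}^{k-1}\tau^{(k-1-j)\delta}\tau^{j\beta}$ is the unrolled recursion, just reindexed); the geometric bound uses $\tau^{\beta-\delta}>1$ so the sum is dominated by its last term up to the factor $C_1 = \tau^{\beta-\delta}/(\tau^{\beta-\delta}-1)$; and the final passage from $\tau^k r$ to $\rho$ correctly costs the two factors $\tau^{-\delta}$ and $\tau^{-\beta}$. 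The only cosmetic note is that your display bounds hold with strict inequality (since $\tau^{k+1}r < \rho$ gives strict inequalities), which of course only strengthens the claimed $\leqslant$.
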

\begin{proof}
	See Lemma 3.4 of \cite{han_lin}.
\end{proof}

The last result in this section relevant to the de Giorgi theory is the following, which is a standard elliptic estimate.

\begin{lemma}[Standard $\mathring{H}^1$ estimate for uniformly elliptic operators]
\label{lemma:std_H_1_est_unif_ell_op}
	Let $u$ be an $\mathring{H}^1$-weak solution of
	$
		-\nabla\cdot (A(x)\nabla u) = \nabla\cdot F 
	$
	in $\mathbb{T}^d$
	for some $A\in L^\infty$ which is \hyperref[def:unif_ell]{uniformly elliptic} with constants $\theta,\,\Lambda$ in $\mathbb{T}^d$ and some $F \in L^2 (\mathbb{T}^d)$.
	The following estimate holds:
	$
		\norm{\nabla u}{L^2}  \leqslant \frac{2}{\theta} \norm{F}{L^2}
	$.
\end{lemma}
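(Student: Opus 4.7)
The plan is to prove this standard energy estimate by using $u$ itself as a test function in the weak formulation of the equation. Since $u \in \mathring{H}^1$ by assumption, it is an admissible test function, and the weak formulation reads
\begin{equation*}
    \int_{\mathbb{T}^d} A(x)\nabla u \cdot \nabla u \, dx = -\int_{\mathbb{T}^d} F \cdot \nabla u \, dx.
\end{equation*}

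First I would bound the left-hand side from below by invoking the uniform ellipticity of $A$ (cf.\ \fref{Definition}{def:unif_ell}), which yields
\begin{equation*}
    \theta \norm{\nabla u}{L^2}^2 \leqslant \int_{\mathbb{T}^d} A\nabla u \cdot \nabla u.
\end{equation*}
Next I would control the right-hand side using the Cauchy--Schwarz inequality, obtaining $\left\lvert \int F \cdot \nabla u \right\rvert \leqslant \norm{F}{L^2} \norm{\nabla u}{L^2}$. Combining these two inequalities gives
\begin{equation*}
    \theta \norm{\nabla u}{L^2}^2 \leqslant \norm{F}{L^2} \norm{\nabla u}{L^2},
\end{equation*}
and dividing through by $\norm{\nabla u}{L^2}$ (treating the trivial case $\nabla u \equiv 0$ separately) produces the bound $\norm{\nabla u}{L^2} \leqslant \theta^{-1} \norm{F}{L^2}$, which is in fact sharper than the claimed estimate with the factor $2/\theta$.

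There is no substantial obstacle here: the result is a direct energy estimate that uses only the symmetry and uniform ellipticity of $A$, together with the fact that $u$ itself lies in the space of test functions. The only mild subtlety is that one should verify that the weak formulation against $\phi \in \mathring{H}^1$ extends to the test function $\phi = u$, but this is immediate since $u \in \mathring{H}^1$ by hypothesis.
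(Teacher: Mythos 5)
Your proof is correct and follows essentially the same route as the paper: test the weak formulation against $u$ itself, apply uniform ellipticity on the left, and bound the forcing term. The only difference is that you use Cauchy--Schwarz and divide, yielding the sharper constant $1/\theta$, whereas the paper uses an absorbing $\varepsilon$-Cauchy (Young) inequality, which gives the (slightly weaker but still sufficient) stated factor $2/\theta$.
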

\begin{proof}
	This follows from a standard energy estimate, using $u$ as a test function:
	\begin{align*}
		\theta \int_{\mathbb{T}^d} {\lvert \nabla u \rvert}^2
		\leqslant \int_{\mathbb{T}^d}  A(x)\nabla u \cdot \nabla u
		= \int_{\mathbb{T}^d} F (\nabla\cdot u)
		\leqslant \frac{2}{\theta} \norm{F}{L^2}^2 + \frac{\theta}{2} \norm{\nabla u}{L^2}^2.
	\end{align*}
	An absorbing $\varepsilon$-Cauchy inequality concludes the proof.
\end{proof}

Finally we conclude this section with a convergence result for approximations using mollifiers which is used in \fref{Section}{sec:smoothing}.

\begin{lemma}[Uniform convergence of mollification of Lipschitz functions]
\label{lemma:unif_conv_moll_Lip_func}
	Let $\varphi$ be a \hyperref[def:mollifier]{standard mollifier} and let $f:\mathbb{R}\to\mathbb{R}$ be Lipschitz.
	For
	$
		C_\varphi \vcentcolon= \int \lvert y \rvert \varphi (y) dy < \infty,
	$
	we have that
	$
		\lvert \left( f - f \ast \varphi_\varepsilon  \right) (x) \rvert \leqslant C_\varphi \text{Lip} (f) \varepsilon
	$
	for any $x\in\mathbb{R}$ and all $\varepsilon > 0$.
\end{lemma}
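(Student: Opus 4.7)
The plan is a direct computation leveraging the normalisation of $\varphi$, the Lipschitz bound on $f$, and a change of variables to extract the factor of $\varepsilon$. This is a textbook mollification estimate and I do not anticipate any serious obstacle; the only thing to be careful about is keeping track of the scaling of $\varphi_\varepsilon$.

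First I would use the fact that $\varphi_\varepsilon$ is \hyperref[def:mollifier]{normalised} to rewrite $f(x)$ as the trivial convolution $f(x) = \int f(x)\,\varphi_\varepsilon(y)\,dy$, so that
\begin{equation*}
	(f - f \ast \varphi_\varepsilon)(x) = \int \bigl( f(x) - f(x-y) \bigr) \varphi_\varepsilon (y)\, dy.
\end{equation*}
Taking absolute values inside the integral and using the Lipschitz bound $|f(x) - f(x-y)| \leqslant \mathrm{Lip}(f)\,|y|$ together with the non-negativity of $\varphi_\varepsilon$ yields
\begin{equation*}
	\bigl| (f - f \ast \varphi_\varepsilon)(x) \bigr| \leqslant \mathrm{Lip}(f) \int |y|\, \varphi_\varepsilon (y)\, dy.
\end{equation*}

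Finally I would perform the change of variables $z = y/\varepsilon$ in the remaining integral. Since $\varphi_\varepsilon (y) = \frac{1}{\varepsilon} \varphi(y/\varepsilon)$, this produces $\int |y|\,\varphi_\varepsilon (y)\, dy = \varepsilon \int |z|\,\varphi(z)\, dz = C_\varphi \varepsilon$, which gives the desired bound. Note that the finiteness of $C_\varphi$ is automatic from $\varphi$ being a \hyperref[def:mollifier]{standard mollifier}, since its support is contained in $(-1,1)$ and it is bounded.
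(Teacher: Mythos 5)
Your proof is correct and follows exactly the same route as the paper's: rewrite the difference as a convolution integral using the normalisation of $\varphi_\varepsilon$, apply the Lipschitz bound and non-negativity of $\varphi_\varepsilon$, and rescale with $z = y/\varepsilon$. You simply spell out the intermediate inequality more explicitly than the paper does.
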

\begin{proof}
	Note that
	$
		f(x) - (f\ast \varphi_\varepsilon )(x)
		= \int \left[ f(x) - f(x-y) \right] \varphi_\varepsilon (y) dy
	$ since $\varphi$ is \hyperref[def:mollifier]{normalised}.
	The claim then follows from the non-negativity of $\varphi$ and the change of variable $z = y/\varepsilon$.
\end{proof}


\section{Rosetta stone}
\label{sec:rosetta}

The purpose of this section is to provide a dictionary allowing us to translate between the notation of \cite{PQG2017} which introduces the precipitating quasi-geostrophic system
and the notation of this paper.
In particular we will discuss how considering the simple version of $\PV$-and-$M$ inversion recorded in \eqref{eq:PV_M} may be done without loss of generality, qualitatively speaking.

For the purpose of this section, and of this section only, we introduce the constants $f$, $B_\theta$, $B_q$, $C_\theta$, and $C_q > 0$.
We warn the reader that they should not get too attached to these constants: they only appear in this section and are set to unity (and thus disappear) everywhere else in the paper.
They appear here for one reason: justifying that the version of $\PV$-and-$M$ inversion recorded in \eqref{eq:PV_M} is not \emph{overly} simplified.

\fref{Table}{tab:rosetta_notation} provides a dictionary between the constants introduced above and those used in \cite{PQG2017},
as well as between the notation used here and the notation used in that paper.
\begin{table}
\centering
\begin{tabular}{cc}
	\begin{tabular}{cc}
		2017 paper			& Our notation			\\\hline
		$\hat{z}$, $ \partial_z$	& $e_3$, $\partial_3 $		\\
		$\nabla_h$			& $\nabla_h$			\\
		$(u,\,v,\,0)$			& $u$				\\
		$\zeta$				& $ {(\nabla\times u)}_3$	\\
		$\phi$				& $p$				\\
		$\psi$				& $p/f$				\\
		$\theta_e$			& $\theta_e$			\\
		$q_t$				& $q$				\\
		$PV_e$			& $\PV$				\\
		$M$				& $M$				\\
	\end{tabular}
	&
	\begin{tabular}{cccc}
		2017 paper						& Our notation	\\\hline
		$f$							& $f$		\\
		$\frac{g}{\theta_0}$					& $B_{\theta_e}$	\\
		$\frac{gL_v}{\theta_0 c_p}$				& $B_q$		\\
		$\frac{d\tilde{\theta}_e}{dz}$				& $C_{\theta_e}$	\\
		$-\frac{d\tilde{q}_t}{dz} = - \frac{d\tilde{q}_v}{dz}$	& $C_q$		\\
		(as $\tilde{q}_t = \tilde{q}_v$)					\\
	\end{tabular}
\end{tabular}
\caption{
	Each table compares the notation used in \cite{PQG2017} (in its left column) and the notation used in this paper (in its right column).
	Note that the constants appearing in the right column of the table on the right-hand side are only used in \fref{Appendix}{sec:rosetta} and are all set to unity elsewhere.
}
\label{tab:rosetta_notation}
\end{table}
With this dictionary in hand we can compare the forms taken by various expressions and identities, depending on the notation used.
This is recorded in \fref{Table}{tab:rosetta_identities}, which contains expressions for geostrophic and hydrostatic balance, as well as for the potential vorticity, the moist variable $M$,
and the saturated and unsaturated buoyancy frequencies (introduced in \cite{PQG2017}).

\begin{table}
\centering
\begin{tabular}{ccc}
	2017 paper												& Our notation					& Constants set to unity\\\hline
	$ f \hat{z} \times (u,\,v,\,0) = - \nabla_h \phi$							& $u = \frac{1}{f} \nabla_h^\perp h$	& $u = \nabla_h^\perp p$\\
	$ f \frac{\partial\psi}{\partial z} = \frac{g}{\theta_0} \theta_e - \frac{gL_v}{\theta_0 c_p} q_t H_u$	& $\partial_3 p = B_{\theta_e} \theta_e - B_q q H_u$	& $\partial_3 p = \theta_e - qH_u$\\
	$ PV_e = \zeta + \frac{f}{d\tilde{\theta}_e / dz} \frac{\partial\theta_e}{\partial z}$			& $\PV = {(\nabla\times u)}_3 + \frac{f}{C_{\theta_e}} \partial_3 \theta_e$	
																		& $\PV = {(\nabla\times u)}_3 + \partial_3 \theta_e$\\
	$ M = q_t - \frac{d \tilde{q}_t / dz}{d \tilde{\theta}_e / dz} \theta_e	$				& $M = q + \frac{C_q}{C_{\theta_e}} \theta_e$		& $M = \theta_e + q$\\
	$ N_u^2 = \frac{g}{\theta_0} \frac{d\tilde{\theta}_e}{dz} - \frac{gL_v}{\theta_0 c_p} \frac{d \tilde{q}_t}{dz}$
														& $N_u^2 = B_{\theta_e} C_{\theta_e} + B_q C_q$		& $N_u^2 = 2$\\
	$ N_s^2 = \frac{g}{\theta_0} \frac{d\tilde{\theta}_e}{dz}$						& $N_s^2 = B_{\theta_e} C_{\theta_e}$			& $N_s^2 = 1$\\
\end{tabular}
\caption{
	Comparisons of various identities and expressions using the notation of \cite{PQG2017} (on the left),
	the notation of \fref{Appendix}{sec:rosetta} (in the middle), and the notation of the everywhere else in this paper (on the right).
	From top to bottom: geostrophic balance, hydrostatic balance, (equivalent) potential vorticity, moist variable $M$, the unsaturated buoyancy frequency, and the saturated buoyancy frequency.
}
\label{tab:rosetta_identities}
\end{table}

With all of these notational equivalences in hand, we may formulate $\PV$-and-$M$ inversion in six different ways: two for each of the three sets of notation;
one with the Heavisides $H_u$ and $H_s$ and one invoking the function $\min_0 = \min ( \,\cdot\, ,\, 0)$.
From top to bottom we use the notation of \cite{PQG2017}, the notation of this section, and the notation used everywhere else in this paper
(which is the same as the notation used in this section, albeit with the constants $f$, $B_{\theta_e}$, $B_q$, $C_{\theta_e}$, and $C_q$ set to unity):
\begin{align}
	PV_e
	&= \nabla^2_h \psi + f^2 \frac{\partial}{\partial z} \left[
		\frac{1}{N_u^2} \left( \frac{\partial\psi}{\partial z} + \frac{g L_v}{\theta_0 f c_p} M  \right) H_u
		+ \frac{1}{N_s^2} \left( \frac{\partial\psi}{\partial z} \right) H_s 
	\right]
\nonumber\\
	&= \nabla^2_h \psi + f^2 \frac{\partial}{\partial z} \left[
		\frac{1}{N_s^2} \frac{\partial\psi}{\partial z} + \left( \frac{1}{N_s^2} - \frac{1}{N_u^2} \right) {\min}_0\, \left(
			\frac{-g}{f\theta_0} \cdot \frac{d\tilde{\theta}_e / dz}{d\tilde{q}_t / dz} M - \partial_3 p
		\right)
	\right]
\nonumber\\
	\PV
	&= \frac{1}{f} \Delta_h p + f \left[
		\frac{1}{N_u^2} \left( \partial_3 p + B_q M \right) H_u + \frac{1}{N_s^2} \left( \partial_3 p \right) H_s
	\right]
\label{eq:PV_M_min0_constants}\\
	&= \frac{1}{f} \Delta_h p + f \partial_3 \left[
		\frac{1}{N_s^2} \partial_3 p + \left( \frac{1}{N_s^2} - \frac{1}{N_u^2} \right) {\min}_0\, \left( \frac{B_{\theta_e} C_{\theta_e}}{C_q} M - \partial_3 p \right)
	\right]
\nonumber\\
	\PV
	&= \Delta_h p + \partial_3 \left[ \frac{1}{2} (\partial_3 p + M) H_u + (\partial_3 p) H_s \right]
\nonumber\\
	&= \Delta_h p + \partial_3 \left[ \partial_3 p + \frac{1}{2} {\min}_0\, \left( M - \partial_3 p \right) \right]
	= \Delta p + \frac{1}{2} \partial_3 {\min}_0\, \left( M - \partial_3 p \right)
\nonumber
\end{align}

In particular we note that the equations above follow from the dictionary recorded in \fref{Table}{tab:rosetta_identities}.
The only intermediate computation required is recorded below, without proof.
\begin{lemma}
\label{lemma:corresp}
	Suppose that, for some strictly positive constants $B_{\theta_e}$, $B_q$, $C_{\theta_e}$, and $C_q$ the identities
	$
		\partial_3 p = B_{\theta_e} \theta_e - B_q qH_u
	$ and $
		M = q + \frac{C_q}{C_{\theta_e}} \theta_e
	$
	hold.
	Then we have the identity
	$
		q = C_q \left(
			\frac{1}{N_u^2} H_u + \frac{1}{N_s^2} H_s
		\right) \left(
			\frac{B_{\theta_e} C_{\theta_e}}{C_q} M - \partial_3 p
		\right)
	$
	as well as the identities given by
	\begin{align*}
		\frac{\theta_e}{C_{\theta_e}}
		&= \frac{1}{N_u^2} \left( \partial_3 p + B_q M \right) H_u + \frac{1}{N_s^2} \left( \partial_3 p \right) H_s
	\\
		&= \frac{1}{N_s^2} \partial_3 p + \left( \frac{1}{N_s^2} - \frac{1}{N_u^2} \right) {\min}_0 \left( \frac{B_{\theta_e} C_{\theta_e}}{C_q} M - \partial_3 p \right)
	\end{align*}
	where $N_u^s$ and $N_s^2$ are defined in \fref{Table}{tab:rosetta_identities} and where, as usual, $\min_0 (x) \vcentcolon= \min(x,\,0)$.
\end{lemma}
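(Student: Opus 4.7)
This is a purely algebraic result: from the two assumed identities (which encode hydrostatic balance and the definition of $M$), one reads off a linear relation between $(q,\theta_e)$ and $(\partial_3 p,M)$ in each phase, and the task is to combine the two phase-wise answers into a single global expression. The plan is therefore (i) to solve the two-by-two linear system phase by phase, (ii) to observe that the sign of $q$ agrees with the sign of $\tfrac{B_{\theta_e}C_{\theta_e}}{C_q}M-\partial_3 p$, and (iii) to repackage the piecewise answer using $H_u$, $H_s$, and $\min_0$.

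For the formula for $q$: on the saturated phase $\{H_u=0\}$ the first identity reduces to $\partial_3 p=B_{\theta_e}\theta_e$, and substituting $\theta_e=\partial_3 p/B_{\theta_e}$ into the second identity gives $q=M-\tfrac{C_q}{B_{\theta_e}C_{\theta_e}}\partial_3 p=\tfrac{C_q}{N_s^2}\bigl(\tfrac{B_{\theta_e}C_{\theta_e}}{C_q}M-\partial_3 p\bigr)$. On the unsaturated phase $\{H_u=1\}$, eliminating $\theta_e=\tfrac{C_{\theta_e}}{C_q}(M-q)$ from $\partial_3 p=B_{\theta_e}\theta_e-B_q q$ yields $q\cdot\tfrac{N_u^2}{C_q}=\tfrac{B_{\theta_e}C_{\theta_e}}{C_q}M-\partial_3 p$, i.e.\ $q=\tfrac{C_q}{N_u^2}(\tfrac{B_{\theta_e}C_{\theta_e}}{C_q}M-\partial_3 p)$. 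Since $C_q,N_u^2,N_s^2>0$, in both cases $\sign q=\sign\bigl(\tfrac{B_{\theta_e}C_{\theta_e}}{C_q}M-\partial_3 p\bigr)$, which is exactly the compatibility needed for $H_u$ and $H_s$ to be well-defined from the right-hand side. Combining the two cases gives the first claimed formula for $q$.

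For the formula for $\theta_e/C_{\theta_e}$: back-substituting the phase-wise expressions for $q$ into $\theta_e=\tfrac{C_{\theta_e}}{C_q}(M-q)$ (or directly into $\partial_3 p=B_{\theta_e}\theta_e-B_q q H_u$) gives, on $\{H_u=1\}$, $\theta_e/C_{\theta_e}=\tfrac{1}{N_u^2}(\partial_3 p+B_q M)$, and on $\{H_s=1\}$, $\theta_e/C_{\theta_e}=\tfrac{1}{N_s^2}\partial_3 p$. That yields the first (Heaviside) form. To pass to the $\min_0$ form, write $H_u\bigl(\tfrac{B_{\theta_e}C_{\theta_e}}{C_q}M-\partial_3 p\bigr)=\min_0\bigl(\tfrac{B_{\theta_e}C_{\theta_e}}{C_q}M-\partial_3 p\bigr)$ (by the sign correspondence above) and compute that the coefficients match once one uses the key algebraic identity
\begin{equation*}
\tfrac{1}{N_s^2}-\tfrac{1}{N_u^2}=\tfrac{N_u^2-N_s^2}{N_s^2 N_u^2}=\tfrac{B_q C_q}{B_{\theta_e}C_{\theta_e}\, N_u^2},
\end{equation*}
so that $\bigl(\tfrac{1}{N_s^2}-\tfrac{1}{N_u^2}\bigr)\tfrac{B_{\theta_e}C_{\theta_e}}{C_q}=\tfrac{B_q}{N_u^2}$. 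Rearranging the Heaviside form as $\tfrac{1}{N_s^2}\partial_3 p+\bigl[\tfrac{1}{N_u^2}(\partial_3 p+B_q M)-\tfrac{1}{N_s^2}\partial_3 p\bigr]H_u$ and using this identity produces precisely the $\min_0$ form.

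There is no genuine obstacle here: everything reduces to invertibility of the two phase-wise linear systems and the bookkeeping of the constants $B_{\theta_e},B_q,C_{\theta_e},C_q,N_u^2,N_s^2$. The only point requiring a hint of care is verifying that the sign of $q$ recovered from the linear systems is consistent with the Heaviside used to define them, which as noted is automatic because all the coefficients multiplying $\tfrac{B_{\theta_e}C_{\theta_e}}{C_q}M-\partial_3 p$ are strictly positive.
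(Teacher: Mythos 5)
Your proposal is correct: the phase-by-phase elimination gives $q=\tfrac{C_q}{N_s^2}\bigl(\tfrac{B_{\theta_e}C_{\theta_e}}{C_q}M-\partial_3 p\bigr)$ on $\{q\geqslant 0\}$ and $q=\tfrac{C_q}{N_u^2}\bigl(\tfrac{B_{\theta_e}C_{\theta_e}}{C_q}M-\partial_3 p\bigr)$ on $\{q<0\}$, the sign agreement then legitimises replacing $H_u\bigl(\tfrac{B_{\theta_e}C_{\theta_e}}{C_q}M-\partial_3 p\bigr)$ by $\min_0\bigl(\tfrac{B_{\theta_e}C_{\theta_e}}{C_q}M-\partial_3 p\bigr)$, and your identity $\bigl(\tfrac{1}{N_s^2}-\tfrac{1}{N_u^2}\bigr)\tfrac{B_{\theta_e}C_{\theta_e}}{C_q}=\tfrac{B_q}{N_u^2}$ (using $N_s^2=B_{\theta_e}C_{\theta_e}$ and $N_u^2-N_s^2=B_qC_q$) checks out. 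The paper states this lemma explicitly without proof, so there is nothing to compare against; your computation is exactly the routine argument the authors left implicit.
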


In particular, since elsewhere in this paper the constants $f$, $B_{\theta_e}$, $B_q$, $C_{\theta_e}$, and $C_q$ are set to unity, we obtain the following expression for the potential temperature $\theta_e$ and the water content $q$.
This expression for $q$ is essential in taking $\PV$-and-$M$ inversion from its original form \eqref{eq:PV_M} to its reformulation \eqref{eq:PV_M_min0} which is used throughout this paper.

\begin{cor}
\label{cor:q}
	Suppose that $\partial_3 p = \theta_e - q H_u$ and $M = \theta_e + q$. Then we have the identities $\theta_e = \partial_3 p + \frac{1}{2} \min \left( M - \partial_3 p,\, 0 \right)$
	and  $q = \left( \frac{1}{2} H_u + H_S \right) \left( M - \partial_3 p \right)$.
	In particular the sign of $q$ agrees with the sign of $M-\partial_3 p$.
\end{cor}
\begin{proof}
	This follows from setting $B_{\theta_e} = B_q = C_{\theta_e} = C_q = 1$ in \fref{Lemma}{lemma:corresp}.
\end{proof}

We conclude this section by noting that the form of $\PV$-and-$M$ inversion recorded in \eqref{eq:PV_M} and studied throughout this paper
has nothing to envy to the more general version \eqref{eq:PV_M_min0_constants} recorded above.
The latter is \emph{quantitatively} more general, but the only \emph{qualitative} relation that matters between the various constants at play in \eqref{eq:PV_M_min0_constants}
is that $N_u^2 > N_s^2$ (which is always the case since $N_u^2 - N_s^2 = B_q C_q > 0$).
By setting all of the constants $f$, $B_{\theta_e}$, $B_q$, $C_{\theta_e}$, and $C_q$ to unity (as is done elsewhere in this paper) we are therefore studying the case where $N_u^2 = 2$ and = $N_s^2 = 1$.
As far as the well-posedness theory and the iterative methods studied in this paper are concerned,
this is computationally simpler while describing the same qualitative behaviour as any other choice of values for the constants
$f$, $B_{\theta_e}$, $B_q$, $C_{\theta_e}$, and $C_q$.


\section{Comparison of the variational energy and the conserved energy}
\label{sec:comparison}

In this section we discuss how the variational energy introduced in this paper in order to characterise $\PV$-and-$M$ inversion (c.f. \fref{Definition}{def:energy})
differs from the energy that is conserved by the dynamics of the precipitating quasi-geostrophic equations.

Indeed, while this conserved energy may at first glance seem like a viable candidate for the variational formulation of $\PV$-and-$M$ inversion, we will see that it is not the case.
We will also see that the energy we introduce in \fref{Definition}{def:energy} has better regularity properties than the conserved energy.
First we write the conserved energy in terms of solely $p$ and $M$.

\begin{lemma}[Rewriting the conserved energy]
\label{lemma:rewriting_conserved_energy}
	The conserved energy $E_{cons}$ introduced in \cite{marsico} as $E_1$ may, in the notation of \fref{Appendix}{sec:rosetta}, be written as follows:
	\begin{equation}
	\label{eq:cons_en_u_theta_q}
		E_{cons} = \int_{\mathbb{T}^3} \frac{1}{2} {\lvert u \rvert}^2 + \left(
				\frac{\widetilde{B}_{\theta_e}}{2 N_u^2} H_u + \frac{B_{\theta_e}}{2 N_s^2} H_s
			\right) \theta_e^2
			+ \frac{\widetilde{B}_q}{2 N_s^2} q^2 H_u
	\end{equation}
	for $\widetilde{B}_{\theta_e} = B_{\theta_e} \left( B_{\theta_e} + B_q C_q / C_{\theta_e} \right)$ and $\widetilde{B}_q = B_q \left( B_q + B_{\theta_e} C_{\theta_e} / C_q \right)$.
	In particular, if we consider $B_{\theta_e} = B_q = C_{\theta_e} = C_q = 1$ then
	$
		E_{cons} = \int_{\mathbb{T}^3} \frac{1}{2} {\lvert u \rvert}^2 + \frac{1}{2} \theta_e^2 + \frac{1}{2} q^2 H_u
	$
	such that, if geostrophic balance $u = \nabla_h^\perp p$ and hydrostatic balance $\partial_3 p = {\theta_e} - qH_u$ hold,
	\begin{equation}
	\label{eq:cons_en_p_M}
		E_{cons} = \int_{\mathbb{T}^3} \frac{1}{2} {\lvert \nabla p \rvert}^2 + \frac{1}{4} (M + \partial_3 p) {\min}_0\, (M - \partial_3 p)
	\end{equation}
	where as usual, $\min_0 (x) \vcentcolon= \min(x,\,0)$.
\end{lemma}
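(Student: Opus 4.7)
My plan is to proceed in two stages corresponding to the two assertions of the lemma. The first stage is purely a translation exercise: starting from the definition of $E_1$ in \cite{marsico}, I would substitute the notational correspondences recorded in \fref{Table}{tab:rosetta_notation} and use the definitions of $N_u^2$, $N_s^2$, $\widetilde{B}_{\theta_e}$, and $\widetilde{B}_q$ given in the statement to rewrite each potential-energy term in the form displayed in \eqref{eq:cons_en_u_theta_q}. The only care required is in grouping the $\theta_e^2$ contributions by phase (one coefficient in the saturated phase, a different one in the unsaturated phase due to the latent heat contribution) and in collecting the cross-term between $\theta_e$ and $q$ into the $q^2 H_u$ term via the identities that define $\widetilde{B}_{\theta_e}$ and $\widetilde{B}_q$. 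This is routine algebra, albeit the step I expect to involve the most bookkeeping, especially since one must verify that no $\theta_e q$ cross term survives; the definitions of $\widetilde{B}_{\theta_e}$ and $\widetilde{B}_q$ are precisely arranged to absorb it.

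The second stage is the substantive identity. I set $B_{\theta_e} = B_q = C_{\theta_e} = C_q = 1$, so that $N_u^2 = 2$, $N_s^2 = 1$, $\widetilde{B}_{\theta_e} = 2$, $\widetilde{B}_q = 2$, and \eqref{eq:cons_en_u_theta_q} collapses to
\begin{equation*}
	E_{cons} = \int_{\mathbb{T}^3} \tfrac{1}{2} {\lvert u \rvert}^2 + \tfrac{1}{2} \theta_e^2 + \tfrac{1}{2} q^2 H_u.
\end{equation*}
Geostrophic balance $u = \nabla_h^\perp p$ handles the kinetic term, producing $\tfrac{1}{2} {\lvert \nabla_h p \rvert}^2$. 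For the remaining two terms, I would invoke \fref{Corollary}{cor:q}, which tells us that $\theta_e = \partial_3 p + \tfrac{1}{2} \min_0 (M - \partial_3 p)$ and, since $H_u^2 = H_u$ and $q H_u = \tfrac{1}{2} (M-\partial_3 p) H_u = \tfrac{1}{2} \min_0 (M - \partial_3 p)$, also $q^2 H_u = \tfrac{1}{4} \min_0^2 (M-\partial_3 p)$.

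Squaring the expression for $\theta_e$ and adding $\tfrac{1}{2} q^2 H_u$ then gives
\begin{equation*}
	\tfrac{1}{2} \theta_e^2 + \tfrac{1}{2} q^2 H_u = \tfrac{1}{2} (\partial_3 p)^2 + \tfrac{1}{2} (\partial_3 p) {\min}_0 (M - \partial_3 p) + \tfrac{1}{4} {\min}_0^2 (M - \partial_3 p).
\end{equation*}
The key algebraic observation, which I would then exploit, is that on the set $ \left\{ H_u = 1 \right\}$ one has $\min_0 (M-\partial_3 p) = M - \partial_3 p$, so the combination $2\partial_3 p + \min_0 (M-\partial_3 p)$ equals $M + \partial_3 p$ there, while on $ \left\{ H_u = 0 \right\}$ the factor $\min_0 (M - \partial_3 p)$ vanishes and so both sides are zero. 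This shows that
\begin{equation*}
	\tfrac{1}{2} (\partial_3 p) {\min}_0 (M-\partial_3 p) + \tfrac{1}{4} {\min}_0^2 (M-\partial_3 p) = \tfrac{1}{4} (M+\partial_3 p) {\min}_0 (M-\partial_3 p),
\end{equation*}
which, combined with $\tfrac{1}{2} {\lvert \nabla_h p \rvert}^2 + \tfrac{1}{2} (\partial_3 p)^2 = \tfrac{1}{2} {\lvert \nabla p \rvert}^2$, yields \eqref{eq:cons_en_p_M}. The main obstacle throughout is not conceptual but notational: staying disciplined about when a product simplifies because $H_u H_s = 0$ or $H_u^2 = H_u$, and keeping track of the constants in the first stage so that the coefficients in \eqref{eq:cons_en_u_theta_q} are verified precisely rather than merely qualitatively.
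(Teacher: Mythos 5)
Your proof is correct and follows essentially the same computational path as the paper's: expand $\theta_e$ and $q$ in terms of $M$ and $\partial_3 p$ via \fref{Corollary}{cor:q}, collect the square, and use $\min_0(M-\partial_3 p) = (M-\partial_3 p)H_u$ to rewrite the result; your case-by-case check of the key $\min_0$ identity is slightly more explicit than the paper's terse ``a short computation then verifies.''

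One caveat worth flagging: as printed, \eqref{eq:cons_en_u_theta_q} does \emph{not} literally collapse to $\tfrac12 q^2 H_u$ under unit constants, since $\tfrac{\widetilde{B}_q}{2N_s^2} = \tfrac{2}{2\cdot 1} = 1$, not $\tfrac12$. A direct computation from \cite{marsico}'s formula confirms that the correct coefficient is $\tfrac{\widetilde{B}_q}{2N_u^2}$ (and similarly $\tfrac{B_{\theta_e}^2}{2N_s^2}$ rather than $\tfrac{B_{\theta_e}}{2N_s^2}$, though that one is invisible under unit constants), so \eqref{eq:cons_en_u_theta_q} appears to contain a typo. Your stage-2 argument quietly assumes the corrected coefficient; had you carried out the stage-1 ``tedious computation'' literally you would have caught the mismatch, which is a good reminder that the cross-term bookkeeping in stage 1 is where the content actually lives.
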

\begin{proof}
	We recall from \cite{marsico} that the conserved energy is defined to be
	\begin{equation*}
		E_{cons}
		\vcentcolon= \int_{\mathbb{T}^3} \frac{1}{2} {\lvert u \rvert}^2 + \frac{b_u^2}{2 N_u^2} H_u + \frac{b_s^2}{2 N_s^2} H_s
			+ \frac{N_u^2 N_s^2}{2 (N_u^2 - N_s^2)} \widetilde{M}^2 H_u
	\end{equation*}
	where $b_u = \theta_e - q$ and $b_s = \theta_e$ such that the buoyancy may be decomposed as $\theta_e - qH_u = b_u H_u + b_s H_s$
	and where $\widetilde{M}$ is a constant multiple of the moist variable $M$ used herein. Indeed we have that
	$
		\widetilde{M}
		\vcentcolon= b_u / N_u^2 - b_s / N_s^2
		= -(B_q / N_u^2 ) M.
	$
	A straightforward but tedious computation then produces \eqref{eq:cons_en_u_theta_q}.
	To obtain \eqref{eq:cons_en_p_M} we use \fref{Lemma}{lemma:corresp} to write $\theta_e$ and $q$ in terms of $M$ and $\partial_3 p$ and plug into the expression for the conserved energy.
	A short computation then verifies that
	\begin{align*}
		{\lvert u \rvert}^2 + \theta_e^2 + q^2 H_u
		= {\lvert \nabla_h p \rvert}^2 + { \left( \partial_3 p + \frac{1}{2} ( M - \partial_3 p)  H_u \right)}^2 + \frac{1}{4} {(M - \partial_3 p)}^2 H_u
	\\
		= {\lvert \nabla p \rvert}^2 + \frac{1}{2} (M + \partial_3 p) {\min}_0\, (M - \partial_3 p),
	\end{align*}
	as desired.
\end{proof}

We may then use \eqref{eq:cons_en_p_M} to compute the Euler-Lagrange equation associated with the conserved energy.

\begin{cor}[Euler-Lagrange equation associated with the conserved energy]
\label{cor:Euler_Lagrange_cons_en}
	Let $E_{cons}$ be as defined as in \fref{Lemma}{lemma:rewriting_conserved_energy}, with $M$ being fixed and viewing $E_{cons}$ as a function of the pressure $p$ only.
	The Euler-Lagrange equation associated with $E_{cons}$ is
	$
		-\Delta p + \frac{1}{2} \partial_3 ( (\partial_3 p) H_u ) = 0.
	$
\end{cor}
\begin{proof}
	We simply take a derivative of $E_{cons} (p)$ in the direction of $\phi$, obtaining
	$
		DE_{cons} (p) \phi = \int_{\mathbb{T}^3} \nabla p \cdot \nabla\phi - \frac{1}{2} (\partial_3 p) H_u ( \partial_3 \phi),
	$
	from which the claim follows.
\end{proof}

\begin{remark}[Comparison of the variational energy and the conserved energy]
\label{rmk:comparison_en}
	Combining \fref{Lemma}{lemma:Gateaux_diff_energy} and \fref{Corollary}{cor:Euler_Lagrange_cons_en} tells us that
	\begin{equation}
	\label{eq:compare_en}
		DE(p) \phi = \underbrace{DE_{cons} (p) \phi}_{\sim\, \text{principal part}} + \underbrace{\langle \PV - \frac{1}{2} \partial_3 (MH_u) ,\, p \rangle}_{\text{nonlinear forcing}}.
	\end{equation}
	What is very interesting about this identity is that, in some sense, the conserved energy extracts the principal part of the operator characterising $\PV$-and-$M$ inversion.
	Indeed, we can essentially view $\PV - \frac{1}{2} \partial_3 (MH_u)$ as a term forcing the Euler-Lagrange operator associated with the conserved energy.
	However, and this is crucial, this forcing term is \emph{nonlinear} in $p$ due to the presence of $H_u$.
	The derivative of $ \frac{1}{2} \langle \partial_3 (MH_u),\, p \rangle$ with respect to $p$ is \emph{not} equal to $ \frac{1}{2} \partial_3 ( MH_u)$ itself
	which means
	that we cannot simply claim that $\PV$-and-$M$ inversion is the Euler-Lagrange equation associated with $E_{cons} + \PV - \frac{1}{2} \partial_3 (MH_u)$.

	This means that, in order to identify a variational formulation of $\PV$-and-$M$ inversion it is not sufficient to take inspiration from the conserved energy.
	We must instead analyse the PDE carefully, which leads to the definition of a \emph{different} energy.
	The identity \eqref{eq:compare_en} also highlights the essential trick underlying the definition of the variational energy $E$:
	the forcing term $ \frac{1}{2} \partial_3 (M H_u)$ is carefully folded into the energy, thus obtaining a valid variational formulation of $\PV$-and-$M$ inversion.

	In particular this leads to a more regular energy.
	Indeed a comparison of \eqref{eq:def_en} and \eqref{eq:cons_en_p_M} shows that the energy density appearing in the variational energy is $C^{1,\,1}$, as a function of $\partial_3 p$,
	whereas the energy density appearing in the conserved energy is only $C^{0,\,1}$.
\end{remark}


\section{Precipitating quasi-gesotrophic equations}
\label{sec:pqg}

Since this paper introduces the precipitating quasi-gesotrophic equations to an audience that may not be familiar with that system,
we record it here for the reader's convenience.
The dynamic equations are, in their simplest form

\begin{equation*}
	\left\{
	\begin{aligned}
		& ( \partial_t + u\cdot \nabla ) \PV = - \partial_3 u \cdot \nabla \theta_e \text{ and } \\
		& ( \partial_t + u\cdot \nabla ) M = \partial_3 \left( q^+ \right),
	\end{aligned}
	\right.
\end{equation*}
where $q^+ = \max (q,\,0)$ denotes the positive part of $q$.
Here $\PV$, $M$, $\theta_e$, $q$, and $p$ are scalar fields whereas $u$ is a vector field
and we recall that the spatial domain is the three-dimensional torus.
The unknowns $u$, $\theta_e$ and $q$ are determined by $p$ and $M$: $u$ is determined by the geostrophic balance $u_h = \nabla_h^\perp p$,
while $\theta_e$ and $q$ can be recovered from inverting the hydrostatic balance $\theta_e - \min(0,\,q) = \partial_3 p$ and the definition of the moist variable $M \vcentcolon= \theta_e + q$.
Note that $u$ is purely horizontal since its vertical component vanishes, but still depends on all three spatial coordinates.
The pressure $p$ is in turn determined by the potential vorticity $\PV$ and $M$ as the unique solution of the $\PV$-and-$M$ inversion
\begin{equation*}
	\Delta p + \frac{1}{2} \partial_3 \left( \min \left( M - \partial_3 p,\, 0 \right) \right) = \PV.
\end{equation*}
In particular, by the solvability of $\PV$-and-$M$ inversion established in this paper we see that $u$, $\theta_e$, and $q$ are entirely determined by $\PV$ and $M$.
We can therefore view the full system as a nonlinear and nonlocal transport equation for $\PV$ and $M$.
Alternatively, akin to what is done for dry QG, we can view the entire system as dependent solely on $p$ and $M$.

For the sake of comparison, we note that in the dry case the geostrophic and hydrostatic balances tell us that $u = \nabla_h^\perp p$ and $\theta_e = \theta = \partial_3 p$
and hence the forcing term $-\partial_3 u \cdot \nabla\theta_e$ in the dynamic equation for the potential vorticity vanishes.
This is as expected since the potential vorticity is purely advected in the dry case.

The forcing term $\partial_3 \left(q^+\right)$ in the dynamic equation for the moist variable $M$ is the eponymous precipitation term.
Indeed, when saturation is taken to occur at $q=0$ as is done here, $q^+$ describes precisely \emph{liquid rainwater}, which is assumed in this model to fall directly downward at a constant speed.
For simplicity all physical constants have been set to unity -- the appropriate constants can be found in \cite{PQG2017}.


\section*{Acknowledgments}
This work was partially supported by the National Science Foundation under grant NSF-DMS-1907667.
As recipient of an INI-Simons Postdoctoral Research Fellowship, A.R.-T. gratefully acknowledges support from
the Simons Foundation, the Isaac Newton Institute for Mathematical Sciences, and the Department of Applied Mathematics and Theoretical Physics of the University of Cambridge.

The authors thank the anonymous reviewers for helpful comments and thank Peter Constantin for helpful discussions in the early stages of this work.

On behalf of all authors, the corresponding author states that there is no conflict of interest.


\bibliographystyle{alpha-bis}
\bibliography{references}

\end{document}